\documentclass[hidelinks,onefignum,onetabnum]{siamart251216}

\usepackage{lipsum}
\usepackage{epstopdf}
\usepackage[normalem]{ulem}
\usepackage{algorithmic}
\ifpdf
  \DeclareGraphicsExtensions{.eps,.pdf,.png,.jpg}
\else
  \DeclareGraphicsExtensions{.eps}
\fi

\newsiamthm{assumption}{Assumption}
\newsiamremark{remark}{Remark}
\newsiamremark{hypothesis}{Hypothesis}
\crefname{hypothesis}{Hypothesis}{Hypotheses}
\newsiamthm{claim}{Claim}
\newsiamremark{fact}{Fact}
\crefname{fact}{Fact}{Facts}
\usepackage{amsmath,amssymb,amsfonts,mathrsfs}
\usepackage{graphicx}
\usepackage{textcomp}
\usepackage{xcolor}
\usepackage{enumerate}
\usepackage{float}
\usepackage{tikz}
\usetikzlibrary{arrows.meta, positioning, fit, decorations.markings,calc}

\allowdisplaybreaks[4]

\tikzset{
    degil/.style={
        postaction={
            decorate,
            decoration={
                markings,
                mark=at position 0.5 with {
                    \draw[-] (-2pt,-2pt) -- (2pt,2pt);
                }
            }
        }
    }
}

\makeatletter
\DeclareOldFontCommand{\rm}{\normalfont\rmfamily}{\mathrm}
\DeclareOldFontCommand{\sf}{\normalfont\sffamily}{\mathsf}
\DeclareOldFontCommand{\tt}{\normalfont\ttfamily}{\mathtt}
\DeclareOldFontCommand{\bf}{\normalfont\bfseries}{\mathbf}
\DeclareOldFontCommand{\it}{\normalfont\itshape}{\mathit}
\DeclareOldFontCommand{\sl}{\normalfont\slshape}{\@nomath\sl}
\DeclareOldFontCommand{\sc}{\normalfont\scshape}{\@nomath\sc}
\makeatother

\usepackage{csquotes}  

\usepackage{physics}   

\newcommand \N   {\mathbb{N}}
\newcommand \R   {\mathbb{R}}

\newcommand \C   {\mathbb{C}}

\newcommand{\vertiii}[1]{{\left\vert\kern-0.25ex\left\vert\kern-0.25ex\left\vert #1 
    \right\vert\kern-0.25ex\right\vert\kern-0.25ex\right\vert}}

\newcommand \qiq   {\quad\Iff\quad}
\newcommand \srs   {\ \ \Rightarrow\ \ }

\newcommand \Iff   {\Leftrightarrow}

\newcommand{\normt}[1]{{\left\vert\kern-0.25ex\left\vert\kern-0.25ex\left\vert #1 
		\right\vert\kern-0.25ex\right\vert\kern-0.25ex\right\vert}}

\renewcommand{\ker}{{\rm Ker}\,}

\newif\ifMath					
\newif\ifEngi					

\newif\ifDFGtext					 

\newif\ifAndo              
													
\newif\ifExercises					
\newif\ifSolutions          
\newif\ifGerman							
\newif\ifEnglish						

\newif\ifnothabil						

\newif\ifFuture							

\newif\ifConf                    
\newif\ifJournal								 

\newif\ifNOTFORBOOK
\newif\ifFullVersion
\newif\ifExludedDueToSpaceReasons

\usepackage{xifthen}

\newcommand{\einsnorm}[2]{\ensuremath{
    \!\!\;\!\!\!\;
    \left\bracevert\!\!\!\!\!\left\bracevert
    \!
		\ifthenelse{\isempty{#2}}{#1}{#1(#2)}
    \!
      \right\bracevert\!\!\!\!\!\right\bracevert
    \!\!\;\!\!\!\;
  }}

\usepackage{xcolor}
\definecolor{blond}{rgb}{0.98, 0.94, 0.75}
	
\newlength\mytemplen
\newsavebox\mytempbox

\makeatletter
\newcommand\mybluebox{%
    \@ifnextchar[
       {\@mybluebox}%
       {\@mybluebox[0pt]}}

\def\@mybluebox[#1]{%
    \@ifnextchar[
       {\@@mybluebox[#1]}%
       {\@@mybluebox[#1][0pt]}}

\def\@@mybluebox[#1][#2]#3{
    \sbox\mytempbox{#3}%
    \mytemplen\ht\mytempbox
    \advance\mytemplen #1\relax
    \ht\mytempbox\mytemplen
    \mytemplen\dp\mytempbox
    \advance\mytemplen #2\relax
    \dp\mytempbox\mytemplen
    \colorbox{blond}{\hspace{1em}\usebox{\mytempbox}\hspace{1em}}}

\makeatother

\makeatletter
\let\origd=\d
\renewcommand*\d{
  \relax\ifmmode
    \mathrm{d}%
  \else
    \expandafter\origd
  \fi
}\makeatother

\usepackage{mathtools}

\makeatletter 
\newcommand{\pushright}[1]{\ifmeasuring@#1\else\omit\hfill$\displaystyle#1$\fi\ignorespaces}
\newcommand{\pushleft}[1]{\ifmeasuring@#1\else\omit$\displaystyle#1$\hfill\fi\ignorespaces}
\makeatother 

\newcounter{syscounter}

\newcounter{WPcounter}
\newcounter{PRcounter}

\headers{From detectability to exponential OSS and back}{Q. Chen, A. Mironchenko, and F. Wirth}

\title{From detectability of abstract linear systems to exponential output-to-state stability and back\thanks{Submitted to the editors DATE.
\funding{This work was partially supported by German Research Foundation (DFG), grants MI 1886/2-2 (project Nr. 415101813) and MI 1886/3-1 (project Nr. 540580186).}}}

\author{
Qiaoling Chen\thanks{Faculty of Computer Science and Mathematics, University of Passau, 94032 Passau, Germany
(\email{qiaoling.chen@uni-passau.de}).}
\and
Andrii Mironchenko\thanks{Department of Mathematics, University of Bayreuth, 95447 Bayreuth, Germany
(\email{andrii.mironchenko@uni-bayreuth.de}).}
\and
Fabian Wirth\thanks{Faculty of Computer Science and Mathematics, University of Passau, 94032 Passau, Germany
(\email{fabian.lastname@uni-passau.de}).}
}

\usepackage{amsopn}

\ifpdf
\hypersetup{
  pdftitle={From detectability of abstract linear systems to exponential output-to-state stability and back},
  pdfauthor={Q. Chen, A. Mironchenko, and F. Wirth}
}
\fi

\begin{document}

\maketitle

\begin{abstract}
We study exponential output-to-state stability (eOSS) of linear infinite-dimensional systems in Banach spaces with bounded output operators. 
It is shown that eOSS is equivalent to the existence of a coercive eOSS Lyapunov function in implication form. 
Also exponential detectability guarantees the existence of a coercive eOSS Lyapunov function in dissipation form and therefore eOSS. Counterexamples demonstrate that the converse implications fail in general: exponential zero-detectability does not imply eOSS, eOSS does not imply the existence of an eOSS Lyapunov function in dissipation form, which, in turn, does not imply exponential detectability.  If the unstable subspace is finite-dimensional, zero-detectability implies exponential detectability and yields equivalent characterizations of eOSS. The results are illustrated with a parabolic equation.

\end{abstract}

\begin{keywords}
Output-to-state stability, detectability, linear systems, Lyapunov methods, stability analysis, distributed parameter systems, infinite-dimensional systems 
\end{keywords}

\begin{MSCcodes}
93C05, 93B07, 47D06, 93D09, 93D20, 37B25, 93B52
\end{MSCcodes}

\section{Introduction}

Detectability is a fundamental structural property of control systems.  For linear systems, it was first introduced in \cite{wonham1968matrix} as the dual of stabilizability, and soon it became evident that detectability is central in observer design and stabilization by output feedback  \cite{trentelman2001control,Curtain2020}. It also appears in filtering and linear state estimation, especially in  Kalman filters and Riccati-based methods \cite{anderson1979optimal,lancaster1995algebraic,kailath2000linear}. 

For finite-dimensional linear systems, various equivalent ways may be chosen to define detectability. For instance, a spectral definition (``observability of (at least) the unstable modes", \cite{wonham1968matrix}), or the property that a zero output implies asymptotic convergence of the associated state to zero, \cite[p.~317]{sontag2013mathematical}.

The central desired consequence is the existence of an exponentially stabilizing output injection, from which a Luenberger observer with uniformly exponentially stable error dynamics may be constructed.   For infinite-dimensional linear systems, the existence of exponentially stabilizing output injections is now frequently called ``exponential detectability", \cite[Section~8.1]{Curtain2020}, \cite[Section~3.4]{morris2020controller}. The spectral concept used by Wonham, on the other hand, is frequently not sufficient, while the definition using vanishing outputs is again too weak. A further detectability notion of interest is $L^2$-detectability, see e.g. \cite[Definition 11]{koshkin2016positive}, which requires that trajectories corresponding to square integrable output trajectories are in $L^2$ as well.

Detectability derives of course from the notion of observability, and it is
a rule of thumb that detectability is observability on the unstable
part of the dynamics. In the infinite-dimensional case, it is a classical
result that there is a hierarchy of observability concepts ranging from exact observability to
approximate observability. Substantial work has been undertaken to
fully understand the conditions that separate the notions, \cite{haak2012exact,chen2019infinite}. Spectral
conditions for observability and detectability are discussed in \cite{el2011spectral}. A
generalized Hautus condition for exact observability was introduced in
\cite{russell1994general} and counterexamples to the expectation that this condition is necessary and
sufficient were presented in \cite{jacob2004counterexamples}. On the other hand, several cases were identified in which the generalized Hautus condition does in fact guarantee exact observability; see the discussion in \cite{el2011spectral}. 

The relation of exact observability on a finite interval and exponential detectability can be addressed by duality arguments. It is a consequence of the results of \cite{datko1971linear} that in Hilbert spaces exact controllability of linear systems with bounded input operators implies uniform exponential stabilizability. In the general Banach space case, this is false. More precisely, it is shown in \cite{przyluski1988controllability} that a Banach space $X$ has the property that every exactly controllable system with bounded input operator is exponentially stabilizable if and only if $X$ is isomorphic to a Hilbert space. By duality, this shows that in the general Banach space setting exact observability does not imply exponential detectability. We give a concrete example to this effect in Proposition~\ref{prop:not-dete}. In Hilbert spaces, \cite[Example 8.1.2]{Curtain2020} shows that approximate controllability does not imply exponential stabilizability, and again a duality argument shows that there is no implication between approximate observability and exponential detectability.

As a consequence, there are distinct notions of detectability, and a hierarchy of related concepts may be defined.  Spectral conditions are again relevant for specific system classes.  In the particular case of linear delay systems, spectral detectability has been used in the context of output feedback design, \cite{fiagbedzi1990output}. In general, however, exponential detectability is not characterized by them. We will also consider exponential zero detectability, by which we mean that trajectories corresponding to zero outputs satisfy uniform bounds of exponential decay; see Definition~\ref{def:exp-zero-detect}.

There is a close relation of the concept of exponential detectability to Lyapunov theory, see \cite{koshkin2016positive} for a discussion of the related theory in Banach spaces, and  \cite{zabczyk1975remarks} for the relations of exponential detectability and the solutions to algebraic Riccati equations in Hilbert spaces. As we will see in Proposition~\ref{prop:coer eOSS LF}, exponential detectability yields a natural OSS Lyapunov function in dissipation form in terms of an equivalent norm in which the closed-loop system is a contraction. 

All of this raises the question whether a more refined analysis of distinct concepts of detectability is possible.
In this paper, we study output-to-state stability (OSS) for linear systems with bounded output operator and show that this defines a system property between exponential detectability and exponential zero detectability. In addition, the characterization of OSS in terms of Lyapunov functions is investigated.

Output-to-state stability imposes bounds on the evolution of trajectories in terms of an asymptotically decaying influence of the initial condition together with a bound derived from the observed output. 
The concept was introduced in \cite{sontag1996} with the aim to generalize notions of detectability to nonlinear finite-dimensional systems. For finite-dimensional linear systems it is known to be equivalent to detectability, \cite{sontag1997output,krichman2001input}.
In addition, a general characterization of OSS in terms of OSS Lyapunov functions both in implication and in dissipation form was obtained in the finite-dimensional case, \cite{sontag1997output}.

We show in this paper that the equivalence of all these concepts falls apart already for linear infinite-dimensional systems with bounded output operator. Exponential detectability remains the strongest property. The two potentially possible ways of describing OSS via Lyapunov functions turn out to be nonequivalent. Only the implication formulation of OSS Lyapunov functions is in fact equivalent to OSS. Examples are provided that exhibit the distinction between the different concepts.

OSS theory and its extensions to systems with inputs and outputs, such as (incremental) input-output-to-state stability, have a broad range of applications. Incremental IOSS is a necessary condition for the existence of robust full-state observers \cite{sontag1997output} and other types of state estimators, including the extended Kalman filter, full information estimation, and moving horizon estimation \cite{allan2021nonlinear}. Beyond observer design, incremental IOSS plays a central role in optimization-based state estimation \cite{RaM09}. Furthermore, for optimal control problems with nonnegative stage costs, the existence of an IOSS-like Lyapunov function on bounded sets implies strict dissipativity for cost functions minimizing a function of the output norm \cite{HoG19}. Strict dissipativity, in turn, yields the turnpike property, which asserts that optimal trajectories corresponding to different time horizons exhibit a common long-term behavior \cite{GrM16,MGA15,TrZ15}. This property is fundamental for the analysis and performance of model predictive control schemes \cite{GrP17,RMD19,AlZ00}.

While the infinite-dimensional OSS theory is making its first steps, its older sibling - the input-to-state stability (ISS) framework - is rather well-developed in the infinite-dimensional setting.

ISS provides a systematic language for stability estimates with respect to external signals \cite{sontag1989smooth}. ISS quantifies the effect of inputs on the state, while input-to-output stability (IOS) describes the corresponding effect on measured outputs \cite{sontag1999notions}. Input-output-to-state stability (IOSS) combines input and output information in an estimate of the state and is closely related to robust detectability \cite{krichman2001input}.  Detectability-type assumptions also arise in recent output-feedback stabilization results for nonlinear finite and infinite-dimensional systems \cite{preuster2026stabilization}. Incremental IOSS strengthens this viewpoint by comparing pairs of trajectories and is connected with robust state estimation and observer design \cite{allan2021nonlinear}.

For nonlinear evolution equations on Banach spaces, ISS characterizations and Lyapunov methods have been studied in \cite{jayawardhana2008infinite,Karafyllis:2016a,Karafyllis:2018iss,
mironchenko2017characterizations,mironchenko2018lyapunov,
mironchenko2020input,Chaillet2023}. For linear infinite-dimensional systems, semigroup methods and admissibility theory are central, especially when input operators are unbounded \cite{Jacob:2018_SIAM,Schwenninger2020}. For ODE systems, ISS can be characterized through IOS and IOSS, and analogous connections have recently been investigated for infinite-dimensional systems \cite{Bachmann2024}.


Compared with the relatively mature development of ISS for infinite-dimensional systems, OSS theory is still at a much earlier stage. This gap is already visible for time-delay systems, a canonical class of infinite-dimensional systems, where the extension of OSS and IOSS theory was pointed out as missing in the survey by \cite[Section 8.10]{Chaillet2023}. A first step toward an OSS theory for evolution equations on Banach spaces was made in \cite{chen2025lyapunov}, where direct OSS Lyapunov theorems, the vanishing output vanishing state property, and uniform global asymptotic stability modulo output were studied. However, converse Lyapunov results and the precise relations between output-to-state estimates, detectability properties, and Lyapunov characterizations were not addressed there.

\textbf{Contribution.} 
The present paper develops a systematic OSS theory for linear infinite-dimensional
systems with bounded output operators on Banach spaces. Our first result concerns the relationship between OSS and its exponential counterpart, exponential output-to-state stability (eOSS). Although asymptotic and
uniform exponential stability are distinct for general infinite-dimensional linear
systems, the linear structure can be used to derive
exponential estimates with linear gain from a simple OSS condition. Consequently, OSS, OSS with a linear
gain, eOSS, and eOSS with linear gain are equivalent for the systems studied in this paper.

Next, we show that \emph{eOSS is equivalent to the existence of a coercive eOSS Lyapunov function in implication form}, which provides a powerful tool for the analysis of OSS.
At the same time, we show by counterexample that \emph{there are eOSS systems for which there is no coercive OSS Lyapunov function in dissipation form}. 
The fact that the existence of an OSS Lyapunov function in dissipation form is more demanding than the existence of an OSS Lyapunov function in implication form -- even for linear systems with bounded output operators -- is rather surprising, and suggests that implication-form Lyapunov functions may be, after all, more suitable for eOSS analysis.

Next, we relate exponential OSS to the classical detectability concepts.
 Exponential detectability is shown to
imply the existence of a coercive eOSS Lyapunov function in dissipation
form with linear gain. The construction is based on an equivalent norm generated by an
exponentially stable output-injected semigroup. Consequently, exponential
detectability implies eOSS.

However, the converse implications fail.
Counterexamples are provided showing that \emph{exponential zero-detectability does not imply eOSS, that eOSS does not imply the existence of a coercive eOSS Lyapunov function in dissipation form, and that the existence of
such an eOSS Lyapunov function does not imply exponential detectability.}  
We summarize these relations in Figure~\ref{fig:Result}.

Finally, we identify an additional structural condition under which the finite-dimensional picture is recovered. If the unstable subspace is finite-dimensional, in the
sense of an invariant decomposition with an exponentially stable complement, then exponential zero-detectability implies exponential detectability. Under this assumption, all different detectability properties studied in this paper become equivalent.

Our findings show that there is a whole world between the classical concepts of exponential detectability and exponential zero-detectability - centered around the notion of output-to-state stability. As this concept can be naturally formulated for nonlinear systems and has an equivalent characterization in terms of OSS Lyapunov functions in implication form, our results suggest that OSS can serve as a canonical detectability property for infinite-dimensional systems, as it is in the setting of nonlinear ODEs \cite{sontag2008input}. 

A six-page conference version of this paper has been presented at the MTNS 2026 conference \cite{chen2026exponential}.

\textbf{Outlook.}
This work can serve as a firm basis for the study of OSS and detectability for infinite-dimensional systems. 
A natural direction for future work is the study of IOSS for linear infinite-dimensional systems, including Lyapunov characterizations and
connections with detectability, especially in the presence of unbounded input or output operators.
Some of the counterexamples presented in this work are given for strongly continuous semigroups in non-reflexive Banach spaces.
The question of whether some of the implications can be valid in the case of reflexive Banach spaces, Hilbert spaces, or if the semigroup is analytic, remains open as well. 
One of the main objectives is the development of the nonlinear infinite-dimensional OSS theory.

\textbf{Structure.}
The remainder of the paper is organized as follows.
Section~\ref{sec:prelim} introduces the system class, OSS/eOSS notions, and
Lyapunov functions, and proves the equivalence of the OSS and eOSS variants.
In Section~\ref{sec:Converse eOSS Lyapunov theorems} we prove the converse eOSS Lyapunov theorem (in implication form).
Section~\ref{sec: exp detectability} relates eOSS Lyapunov functions to
exponential detectability.
Section~\ref{sec:counterexam} provides counterexamples to the converse
implications.
Section~\ref{sec: decomposition} gives an observer interpretation and derives equivalent characterizations of eOSS
when the unstable subspace is finite-dimensional.
Section~\ref{sec: example} applies the results to a parabolic equation.
Section~\ref{concl} concludes the paper and discusses future research
directions.

\textbf{Notation.}  Let $\mathbb{R}$, $\mathbb{C}$, $\mathbb{N}$ be the sets of real numbers, complex numbers, and positive integers, respectively. We also set $\mathbb{R}_+:=[0,\infty)$. Given topological spaces $X, Y$, the  space of continuous functions from $X$ to $Y$ is denoted by
$C(X, Y)$ and 
$C^2(0,1)$ denotes the space of real-valued functions that are twice continuously differentiable on the interval $(0,1)$. Further, $L^\infty_{\, \mathrm{loc}}([0,\infty),Y)$ denotes the space of locally essentially bounded, strongly measurable functions from $[0,\infty)$ to $Y$.

Let $X$ and $Y$ be Banach spaces. We denote by 
$\mathcal{L}(X,Y)$ the space of bounded linear operators from $X$ to $Y$, endowed with the operator norm $\|\cdot\|_{\mathcal{L}(X,Y)}$, and  write $\mathcal L(X):=\mathcal L(X,X)$. For the operator norm on $\mathcal{L}(X)$, we simply write $\|\cdot\|.$
 A function $V:X\to\mathbb{R}_+$ is called $p$-absolutely homogeneous, $p>0$, if
$V(ax)=|a|^pV(x)$
for all $a\in\mathbb{R}$ and $x\in X$. When spectral notions are used for operators on real Banach spaces, the corresponding complexifications are understood.
We use the following notation for the classes of  comparison functions:
\begin{align*}
\mathcal{P}&:=\{\gamma:\mathbb{R}_+\rightarrow\mathbb{R}_+ \mid \gamma\ \text{is continuous with}
\ \gamma(0)=0\ \text{and}\ \gamma(r)>0\
\text{for}
\ r>0\},\\
\mathcal{K}&:=\{\gamma\in\mathcal{P} \mid \gamma\ \text{is strictly increasing}\},\\
\mathcal{K}_\infty&:=\{\gamma\in \mathcal{K}\mid \gamma\ \text{is unbounded}\},\\
 \mathcal{L}&:=\{\gamma:\mathbb{R}_+\rightarrow\mathbb{R}_+\mid \gamma\ \text{is continuous, strictly  decreasing with}\ \lim_{r\rightarrow\infty}\gamma(r)=0 \},\\
\mathcal{KL}&:=\{\beta: \mathbb{R}_+\times \mathbb{R}_+\rightarrow\mathbb{R}_+\mid \beta\ \text{is continuous}, \beta(\cdot,t)\in\mathcal{K},
 \forall t\geq0, \beta(r,\cdot)\in\mathcal{L}, \forall r>0\}.
\end{align*}

\section{Problem setting}\label{sec:prelim} 

Let $X$ and $Y$ be Banach spaces (the state space and the space of output values, respectively). Consider the system
\begin{equation}\label{eq:linear-system}
\dot{x}(t) = Ax(t), \quad y(t) = Cx(t), 
\end{equation}
where $A:D(A)\subset X\to X$ generates a $C_0$-semigroup $T(\cdot)$ on $X$, and $C\in\mathcal{L}(X,Y)$ is the output operator.

We denote by $\phi(t,x)$ the mild solution of system \eqref{eq:linear-system} at time $t\ge 0$
corresponding to the initial condition $x\in X$, i.e.,
$\phi(t,x)=T(t)x$.
For each 
$x \in X$, the function $\phi(\cdot, x)$ belongs to the vector space $C(\mathbb R_+, X)$ of continuous $X$-valued functions on $\mathbb R_+$.

We choose the space of output trajectories  as the vector space
\[
\mathcal Y := C(\mathbb R_+,Y).
\]
Slightly abusing notation, we denote by $y(\cdot,x):=CT(\cdot)x$ the output trajectory corresponding to the initial condition $x \in X$.

We first recall the notion of output-to-state stability as introduced in \cite{sontag1996}. 

 \begin{definition}\label{def:OSS} 
 System  \eqref{eq:linear-system} is   called \emph{output-to-state stable (OSS)}, if there exist $\beta\in\mathcal{KL}$ and $\gamma\in\mathcal{K}$ such that for all $ x\in X$ and all $t\geq0$ it holds that
 \begin{align}
\label{abOSS}
 \|\phi(t,x)\|_{X}\leq \beta(\|x\|_{X},t)+ \gamma\left(\max\limits_{s\in[0,t]}\|y(s,x)\|_Y\right).
 \end{align}
If in addition $\gamma$ can be chosen as a linear function, we call \eqref{eq:linear-system} \emph{OSS}  \emph{with linear gain}.
 \end{definition}

 We now define the exponential version of OSS.

 \begin{definition}\label{def:eOSS}   
System  \eqref{eq:linear-system} is called \emph{exponentially output-to-state stable \linebreak(eOSS)}, if there exist $M,\mu>0$ and $\gamma\in\mathcal{K}$ such that for all $ x\in X$ and all $t\geq0$ it holds that
\begin{align}
\label{eOSS}
\|\phi(t,x)\|_{X}\leq Me^{-\mu t}\|x\|_{X}+ \gamma\left(\max\limits_{s\in[0,t]}\|y(s,x)\|_Y\right).
\end{align}

\medskip
If in addition $\gamma$ can be chosen as a linear function, then we call \eqref{eq:linear-system} \emph{eOSS} \emph{with linear gain}.

If in \eqref{eOSS} we can choose $\gamma=0$, then the semigroup $T(\cdot)$ is
called \emph{(uniformly) exponentially stable (UES)}.

\end{definition}

If the inequality \eqref{eOSS} holds, then bounded outputs imply bounded trajectories. Furthermore, vanishing outputs imply vanishing states; see \cite{chen2025lyapunov}.

For general nonlinear systems, OSS, eOSS, both with or without linear gain  need not be equivalent. As we see now, they are equivalent for the linear system \eqref{eq:linear-system}.

\begin{proposition}
\label{prop:equi OSS eOSS LG}
The following statements are equivalent:
 \begin{enumerate}
 \item[(i)] System \eqref{eq:linear-system} is OSS.
 \item[(ii)] System \eqref{eq:linear-system} is OSS with linear gain.
 \item[(iii)]  System \eqref{eq:linear-system} is eOSS.
 \item[(iv)]  System \eqref{eq:linear-system} is eOSS with linear gain.
    \end{enumerate}
\end{proposition}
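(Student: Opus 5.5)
The trivial implications are (iv)$\Rightarrow$(ii)$\Rightarrow$(i) and (iv)$\Rightarrow$(iii)$\Rightarrow$(i). For (iii)$\Rightarrow$(i), note that $\beta(r,t):=Me^{-\mu t}r$ is of class $\mathcal{KL}$. It therefore suffices to prove (i)$\Rightarrow$(iv). The plan is to use the OSS estimate once to obtain a uniform contraction on a finite time interval, valid modulo the output, and then to iterate this contraction by exploiting linearity and the semigroup property.

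\textbf{Step 1 (linear gain on bounded time intervals via scaling).} Assume \eqref{abOSS}. Choose $\tau>0$ with $\beta(1,\tau)\le \tfrac14$, and fix $x\neq 0$. By linearity, $\phi(t,\lambda x)=\lambda\phi(t,x)$ and $y(\cdot,\lambda x)=\lambda y(\cdot,x)$. Apply \eqref{abOSS} to $x/\|x\|_X$, and write $m(x):=\max_{s\in[0,\tau]}\|y(s,x)\|_Y/\|x\|_X$. Then
\[
\|T(\tau)x\|_X\le \tfrac14\|x\|_X+\gamma\big(m(x)\big)\|x\|_X .
\]
We need a linear bound in place of $\gamma(m(x))\|x\|_X$. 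Split into two cases. Pick $\delta>0$ with $\gamma(\delta)\le\tfrac14$. If $m(x)\le\delta$, then $\|T(\tau)x\|_X\le \tfrac12\|x\|_X$. If $m(x)>\delta$, then $\|x\|_X<\delta^{-1}\max_{s\in[0,\tau]}\|y(s,x)\|_Y$, and hence
\[
\|T(\tau)x\|_X\le M_\tau\|x\|_X\le M_\tau\delta^{-1}\max_{s\in[0,\tau]}\|y(s,x)\|_Y,
\]
where $M_\tau:=\sup_{t\in[0,\tau]}\|T(t)\|<\infty$. In both cases
\[
\|T(\tau)x\|_X\le \tfrac12\|x\|_X+K\max_{s\in[0,\tau]}\|y(s,x)\|_Y
\]
with $K:=M_\tau/\delta$. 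Applying the same case distinction at intermediate times $t\in[0,\tau]$ (using $\beta(1,t)\le\beta(1,0)$) gives $\|T(t)x\|_X\le c\|x\|_X+K'\max_{[0,t]}\|y\|_Y$ for some constants $c,K'$; the constant $M_\tau$ alone would also suffice here.

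\textbf{Step 2 (iteration).} Write $t=n\tau+r$ with $r\in[0,\tau)$. Apply Step 1 to $T(k\tau)x$, noting that $y(s,T(k\tau)x)=y(s+k\tau,x)$. Writing $Y_t:=\max_{s\in[0,t]}\|y(s,x)\|_Y$, induction gives
\[
\|T(n\tau)x\|_X\le 2^{-n}\|x\|_X+K\sum_{k=0}^{n-1}2^{-k}Y_{n\tau}\le 2^{-n}\|x\|_X+2K\,Y_{n\tau}.
\]
Finally, $\|T(t)x\|_X\le M_\tau\|T(n\tau)x\|_X$. Since $2^{-n}\le 2e^{-(\ln 2/\tau)t}$, we obtain
\[
\|\phi(t,x)\|_X\le 2M_\tau e^{-\mu t}\|x\|_X+2KM_\tau Y_t,\qquad \mu:=\ln 2/\tau,
\]
which is (iv).

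The only delicate point is Step 1, namely converting the nonlinear gain $\gamma$ into a linear one. Homogeneity alone produces the term $\gamma(m)\|x\|_X$, which is not linear in the output. The case split on whether the normalized output is small resolves this: small outputs are absorbed into the contraction, and large outputs dominate $\|x\|_X$ itself. Boundedness of $C$ is not needed beyond the outputs being continuous, so that the maxima are well defined.
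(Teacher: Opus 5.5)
Your proof is correct and follows essentially the same route as the paper: you obtain a one-step estimate $\|T(\tau)x\|_X\le \tfrac12\|x\|_X+K\max_{s\in[0,\tau]}\|y(s,x)\|_Y$ by normalizing $x$ and splitting on whether the normalized output is small, then iterate it via the semigroup property and handle the remainder $r\in[0,\tau)$ with $\sup_{s\in[0,\tau]}\|T(s)\|$. The only difference is cosmetic: you fix the contraction factor at $\tfrac12$ (via $\beta(1,\tau)\le\tfrac14$ and $\gamma(\delta)\le\tfrac14$), whereas the paper works with a general $\delta=\beta(1,t_1)+\gamma(\varepsilon)<1$.
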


\begin{proof}
The implications (iv) $\Rightarrow$ (ii) $\Rightarrow$ (i) and (iv) $\Rightarrow$ (iii) $\Rightarrow$ (i) are immediate. Hence, it suffices to prove (i) $\Rightarrow$ (iv). 

(i) $\Rightarrow$ (iv). Assume that system \eqref{eq:linear-system} is OSS with $\beta\in\mathcal{KL}$ and $\gamma\in\mathcal{K}$ such that \eqref{abOSS} holds. 
The case $x=0$ is trivial, so we only consider $x\neq0$.

Since $\beta\in\mathcal{KL}$, there exists $t_1>0$ such that $\beta(1,t_1)<1$. Choose $\varepsilon>0$ such that 
\[
\delta:=\beta(1,t_1)+\gamma(\varepsilon)<1.
\]
Recall the notation $y(s,x):=CT(s)x$, $s\ge 0$, $x \in X$. 
We first show that there exists $K=K(\varepsilon,t_1)\ge0$ such that
\begin{align}\label{eq:key-step}
\|T(t_1)x\|_X\le \delta\|x\|_X+K \max\limits_{s\in[0,t_1]}\|y(s,x)\|_Y,\quad x\in X.
\end{align}

 To this end we consider two cases.  On the one hand, if $\max\limits_{s\in[0,t_1]}\|y(s,x)\|_Y\le \varepsilon\|x\|_X$, then applying \eqref{abOSS} to $\|x\|_X^{-1}x$ at time $t_1$ yields by linearity
\begin{align*}
\bigl\|T(t_1)\tfrac{x}{\|x\|_X}\bigr\|_X
\le \beta(1,t_1)+\gamma\bigl(\|x\|_X^{-1}\max\limits_{s\in[0,t_1]}\|y(s,x)\|_Y\bigr)
\le \beta(1,t_1)+\gamma(\varepsilon)
= \delta.
\end{align*}
Thus, $\|T(t_1)x\|_X\le \delta\|x\|_X$. On the other hand, if $\max\limits_{s\in[0,t_1]}\|y(s,x)\|_Y> \varepsilon\|x\|_X$, then  
\begin{align*}
\|T(t_1)x\|_X
\le \|T(t_1)\|\|x\|_X
\le \frac{\|T(t_1)\|}{\varepsilon}\max\limits_{s\in[0,t_1]}\|y(s,x)\|_Y.
\end{align*}

Thus \eqref{eq:key-step} holds with $K=\frac{\|T(t_1)\|}{\varepsilon}$.

Fix $n\in \N$. To iterate \eqref{eq:key-step}, we apply it to $T(jt_1)x$, $j=0,\ldots,n-1$. 
By the semigroup property, we have
\begin{align*}
\max\limits_{s\in[0,t_1]}\|y(s,T(jt_1)x)\|_Y
=
\max_{0\le s\le t_1}\|CT(s)T(jt_1)x\|_Y
&=
\max_{jt_1\le s\le (j+1)t_1}\|CT(s)x\|_Y\\
&\le
\max\limits_{s\in[0,nt_1]}\|y(s,x)\|_Y.
\end{align*}
Therefore, iterating \eqref{eq:key-step} and using the semigroup property, we obtain
\begin{align*}
\|T(nt_1)x\|_X
\le\;& \delta^n\|x\|_X+(1+\delta+\cdots+\delta^{n-1})K \max\limits_{s\in[0,nt_1]}\|y(s,x)\|_Y\\
\le\;& \delta^n\|x\|_X+\frac{K}{1-\delta}\max\limits_{s\in[0,nt_1]}\|y(s,x)\|_Y.
\end{align*}

Let $t\ge0$ and write $t=nt_1+\tau$ with $n\in \N \cup\{0\}$ and $\tau\in[0,t_1)$; for $n=0$ the preceding bound holds trivially. Then
\begin{align*}
\|T(t)x\|_X
=\; \|T(\tau)T(nt_1)x\|_X
\le\;& \|T(\tau)\|\|T(nt_1)x\|_X\\
\le\;& \|T(\tau)\|\delta^n\|x\|_X
+\|T(\tau)\|\frac{K}{1-\delta}\max\limits_{s\in[0,nt_1]}\|y(s,x)\|_Y.
\end{align*}

Since $T(\cdot)$ is a $C_0$-semigroup, it is uniformly bounded on the compact interval $[0,t_1]$. 
Moreover, since $\max\limits_{s\in[0,nt_1]}\|y(s,x)\|_Y\le \max\limits_{s\in[0,t]}\|y(s,x)\|_Y$ and $\tau\in[0,t_1]$, we obtain
\begin{align*}
\|T(t)x\|_X
\le\;& \sup_{s\in[0,t_1]}\|T(s)\|\delta^n\|x\|_X+\frac{K}{1-\delta}\sup_{s\in[0,t_1]}\|T(s)\| \max\limits_{s\in[0,t]}\|y(s,x)\|_Y.
\end{align*}

Noting that $t<(n+1)t_1$, we have $n>\frac{t}{t_1}-1$. Since $0<\delta<1$, we get
\begin{align*} \delta^n=e^{n \ln \delta}\le e^{(\frac{t}{t_1}-1) \ln \delta} \le \delta^{-1}e^{\frac{\ln\delta}{t_1}t}. \end{align*} Therefore, 
\begin{align*}
\|T(t)x\|_X
\le\;&\delta^{-1} \sup_{s\in[0,t_1]}\|T(s)\| e^{\frac{\ln\delta}{t_1}t}\|x\|_X+\frac{K}{1-\delta}\sup_{s\in[0,t_1]}\|T(s)\| \max\limits_{s\in[0,t]}\|y(s,x)\|_Y.
\end{align*}

Thus, the system \eqref{eq:linear-system} is eOSS with linear gain.
\end{proof}

\subsection{Lyapunov functions}

This subsection introduces the basic notions needed for the Lyapunov analysis of eOSS. First, the Dini derivative is recalled. Then eOSS Lyapunov functions in dissipation and implication form are defined, and the relationship between these two formulations is investigated.

To describe growth properties of functions that are not necessarily differentiable, we use Dini derivatives as a substitute for classical derivatives.

For a continuous function $q:\mathbb{R}\to\mathbb{R}$, define the (upper right-hand) \emph{Dini derivative} of $q$ at $t$ by
\begin{align*}
D^+q(t)
=\mathop{\overline{\lim}}\limits_{\tau\rightarrow+0}
\frac{q(t+\tau)-q(t)}{\tau}.
\end{align*}
The above limsup is taken in the extended real line, allowing $D^+q(t)$ to attain the values $-\infty$ and $+\infty.$
In what follows, we apply this notion to the  function
$t\mapsto V(\phi(t,x))$, whenever this function is continuous.
In addition, define the \emph{Lie derivative} of $V$ at $x \in X$ by
\[
\dot{V}(x):=D^+V(\phi(t,x))|_{t=0}=
\mathop{\overline{\lim}}\limits_{\tau\rightarrow+0}
\frac{V(\phi(\tau,x))-V(x)}{\tau},
\]
provided that $t\mapsto V(\phi(t,x))$ is continuous at $t=0$.

Lyapunov functions serve as a useful tool for studying the eOSS property.  Their evolution along trajectories is described by the Dini derivative. 
We begin by defining eOSS Lyapunov functions in dissipation and implication forms. The former follows the Lyapunov characterization of OSS in \cite{sontag1996}, while the latter is  adapted from
the implication form of the ISS Lyapunov characterization in \cite{sontag1995characterizations}.

\begin{definition}
\label{def:eOSS LF-dissipative}
A   function $V: X\rightarrow \mathbb{R}_{+}$ is called a  \emph{coercive eOSS Lyapunov function in dissipation form} for  system \eqref{eq:linear-system} if:
 \begin{enumerate}
\item[(i)] There exist $k_1,k_2,p>0$ such that for every $x\in X$:
\begin{align}\label{eq:eoss LF1}
k_1\|x\|_{X}^p\leq V(x)\leq k_2\|x\|_{X}^p.
\end{align}

\item[(ii)] For every $x\in X$,  the map $t\mapsto V(\phi(t,x))$, $t\geq0$, is continuous.
\item[(iii)] There exist $\alpha>0$ and $\sigma\in\mathcal{K}_{\infty}$ such that for every $x\in X$,
the following dissipation inequality holds with $p$ as in (i)
\begin{align}
\label{eoss LF2}
\dot{V}(x)\leq-\alpha\|x\|_X^p+\sigma(\|Cx\|_{Y}).
\end{align}

\end{enumerate}

If  \eqref {eoss LF2} holds with $\sigma\equiv0$, then $V$ is called a \emph{coercive UES Lyapunov function}.

The function  $V$ is called a  \emph{coercive eOSS Lyapunov function in  implication form} for \eqref{eq:linear-system} if
items (i) and (ii) hold, and (iii) is replaced by
\begin{enumerate}    
\item[(iii')] there exist $\tilde{\alpha}>0$ and $\chi\in\mathcal{K}_{\infty}$ such that for every $x\in {X}$, we have with $p$ as in (i)
\begin{equation}
\label{eq:aboss Implication}
\hspace{-4mm}\|x\|_{X} \geq\chi(\|Cx\|_{Y})
\quad \Rightarrow\quad  \dot{V}(x)\leq-\tilde{\alpha}\|x\|_X^p.
\end{equation}
\end{enumerate}
If the Lyapunov gain $\sigma$ or $\chi$ can be chosen to be linear, then $V$ is said to have a \emph{linear Lyapunov gain}.
\end{definition}

The inequality \eqref{eoss LF2} gives the estimate for the Lie derivative of $V$ on the whole space $X$. In contrast, the implication \eqref{eq:aboss Implication} provides an estimate for $\dot{V}$ only on the region $\{x\in X \mid \|x\|_{X} \ge \chi(\|Cx\|_{Y})\}$. On the complement of this region, i.e., on the set $\{x\in X \mid \|x\|_{X} < \chi(\|Cx\|_{Y})\}$, \eqref{eq:aboss Implication} does not provide any information about the behavior of the derivative.
The next proposition provides an equivalent characterization of coercive eOSS Lyapunov functions in dissipation form in terms of the implication form supplemented by an additional estimate on the complementary region.

\begin{proposition}\label{prop:dissi-impli}
    Consider $V: X\rightarrow \mathbb{R}_{+}$. For system \eqref{eq:linear-system} the following statements are equivalent:
    \begin{enumerate}
        \item [(i)] $V$ is a coercive eOSS Lyapunov function in dissipation form. 
        \item [(ii)]$V$ is a coercive eOSS Lyapunov function in implication form with parameters $p,\tilde{\alpha}>0$ and $\chi\in\mathcal{K}_{\infty}$, and there exists $\eta\in\mathcal{K}_\infty$  such that for every $x\in X$     
\begin{align}\label{eq:addassm}       
\|x\|_{X} < \chi(\|Cx\|_{Y})
\quad \Rightarrow\quad  \dot{V}(x)\leq \eta(\|Cx\|_Y).     
\end{align}
    \end{enumerate}
\end{proposition}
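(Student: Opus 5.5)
For (i) $\Rightarrow$ (ii) we keep the same $V$, the same exponent $p$ and the same bounds \eqref{eq:eoss LF1}. Items (i) and (ii) of Definition~\ref{def:eOSS LF-dissipative} then transfer verbatim, so only (iii') and \eqref{eq:addassm} need checking.

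For (iii'), let $\alpha,\sigma$ be as in \eqref{eoss LF2}. We set $\tilde\alpha:=\alpha/2$ and
\[
\chi(r):=\Bigl(\tfrac{2}{\alpha}\sigma(r)\Bigr)^{1/p}.
\]
This $\chi$ lies in $\mathcal K_\infty$, since $r\mapsto r^{1/p}$ is a $\mathcal K_\infty$ function and compositions of $\mathcal K_\infty$ functions stay in $\mathcal K_\infty$. Suppose $\|x\|_X\ge\chi(\|Cx\|_Y)$. Then $\sigma(\|Cx\|_Y)\le\frac{\alpha}{2}\|x\|_X^p$, and \eqref{eoss LF2} gives
\[
\dot V(x)\le-\alpha\|x\|_X^p+\tfrac{\alpha}{2}\|x\|_X^p=-\tfrac{\alpha}{2}\|x\|_X^p,
\]
which is \eqref{eq:aboss Implication}.

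For \eqref{eq:addassm} we take $\eta:=\sigma$. Dropping the nonpositive term $-\alpha\|x\|_X^p$ in \eqref{eoss LF2} gives $\dot V(x)\le\sigma(\|Cx\|_Y)$ for every $x\in X$, in particular on the region $\|x\|_X<\chi(\|Cx\|_Y)$.

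For (ii) $\Rightarrow$ (i) we again keep $V$ and $p$, and we take $\alpha:=\tilde\alpha$. On the region $\|x\|_X\ge\chi(\|Cx\|_Y)$, \eqref{eq:aboss Implication} directly gives $\dot V(x)\le-\tilde\alpha\|x\|_X^p\le-\tilde\alpha\|x\|_X^p+\sigma(\|Cx\|_Y)$ for any nonnegative $\sigma$. On the complementary region $\|x\|_X<\chi(\|Cx\|_Y)$ we have $\tilde\alpha\|x\|_X^p<\tilde\alpha\chi(\|Cx\|_Y)^p$. Using \eqref{eq:addassm},
\[
\dot V(x)\le\eta(\|Cx\|_Y)=-\tilde\alpha\|x\|_X^p+\bigl(\eta(\|Cx\|_Y)+\tilde\alpha\|x\|_X^p\bigr)\le-\tilde\alpha\|x\|_X^p+\eta(\|Cx\|_Y)+\tilde\alpha\chi(\|Cx\|_Y)^p .
\]
We therefore define
\[
\sigma(r):=\eta(r)+\tilde\alpha\chi(r)^p,
\]
which is a sum of $\mathcal K_\infty$ functions and hence lies in $\mathcal K_\infty$. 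With this choice \eqref{eoss LF2} holds on both regions, i.e. on all of $X$.

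The only point needing care is the extended-real-valued Dini derivative. If $\dot V(x)=-\infty$, every inequality above holds trivially. The hypotheses exclude $\dot V(x)=+\infty$, since in each case $\dot V(x)$ is bounded above by a finite quantity. So all manipulations are legitimate, and there is no substantive obstacle.
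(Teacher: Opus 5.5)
Your proposal is correct and follows the paper's proof essentially verbatim. For (i)$\Rightarrow$(ii) the paper also takes $\chi(r)=\bigl(\tfrac{2}{\alpha}\sigma(r)\bigr)^{1/p}$, $\tilde\alpha=\alpha/2$ and $\eta=\sigma$; for (ii)$\Rightarrow$(i) it uses $\sigma(r)=\eta(r)+\tilde\alpha\chi(r)^p$ with decay rate $\tilde\alpha$. Your remark on the extended-real-valued Dini derivative is a harmless detail the paper leaves implicit.
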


\begin{proof}
 (i) $\Rightarrow$ (ii). Let $V$ be a coercive eOSS Lyapunov function in dissipation form. Then \eqref{eq:eoss LF1} and \eqref{eoss LF2} hold for appropriate $k_1,k_2,p,\alpha>0$ and $\sigma\in \mathcal{K}_\infty$.
Define $\chi(r):=\bigl(\frac{2}{\alpha}\sigma(r)\bigr)^{1/p}, r\in\mathbb{R}_+$. Consider $x\in X$ with $\|x\|_X\ge \chi(\|Cx\|_Y)$.
Then, by the definition of $\chi$,
\begin{equation*}
\sigma(\|Cx\|_Y)\le \frac{\alpha}{2}\|x\|_X^p.
\end{equation*}
Hence, by~\eqref{eoss LF2},
\begin{equation*}
  \|x\|_{X} \ge \chi(\|Cx\|_{Y})
\quad \Rightarrow\quad \dot V(x)
\le -\frac{\alpha}{2}\|x\|_X^p.
\end{equation*}
Therefore, $V$ is a coercive eOSS Lyapunov function in implication form with $\tilde{\alpha}=\frac{\alpha }{2}>0$.
Moreover, by \eqref{eoss LF2}, for every $x\in X$,
\begin{equation*}
\dot{V}(x)\leq-\alpha\|x\|_X^p+\sigma(\|Cx\|_Y)
\leq \sigma(\|Cx\|_Y).
\end{equation*}
Hence,
\begin{equation*}
\|x\|_{X} < \chi(\|Cx\|_{Y})
\quad \Rightarrow\quad  \dot{V}(x)\leq \sigma(\|Cx\|_Y).
\end{equation*}
Thus, the additional condition holds with $\eta=\sigma\in\mathcal{K}_\infty$. This shows (ii).

 (ii) $\Rightarrow$ (i). Let $V$ be a coercive eOSS Lyapunov function in implication form with $p,\tilde{\alpha}>0$ and $\chi\in\mathcal{K}_{\infty}$. Suppose that there exists $\eta\in\mathcal{K}_\infty$ such that the implication in \eqref{eq:addassm} holds.  Define 
\begin{align*} \sigma(r):=\eta(r)+\tilde{\alpha}\chi(r)^p,
\quad r\ge0. \end{align*}
Then $\sigma\in\mathcal{K}_\infty.$ Let $x\in X.$ If $\|x\|_{X} \geq\chi(\|Cx\|_{Y})$,  then, by \eqref{eq:aboss Implication}, 
\begin{align*}
    \dot{V}(x)\leq-\tilde{\alpha}\|x\|_X^p\le 
    -\tilde{\alpha}\|x\|_X^p+\sigma(\|Cx\|_Y).
\end{align*}
If $\|x\|_{X}<\chi(\|Cx\|_{Y})$, then, by the additional assumption \eqref{eq:addassm},
\begin{align}
\label{eq:Making-LF-better}
    \dot{V}(x)\leq \eta(\|Cx\|_Y)
    &= -\tilde{\alpha}\|x\|_X^p+\tilde{\alpha}\|x\|_X^p+\eta(\|Cx\|_Y)\nonumber\\
    &\le  -\tilde{\alpha}\|x\|_X^p +\tilde{\alpha}\chi(\|Cx\|_Y)^p+\eta(\|Cx\|_Y)\nonumber\\
    &=  -\tilde{\alpha}\|x\|_X^p +\sigma(\|Cx\|_Y).
\end{align}
Thus, for every $x\in X$,  $\dot{V}(x)\leq         
-\tilde{\alpha}\|x\|_X^p+\sigma(\|Cx\|_Y)$. 
Therefore, $V$ is a coercive eOSS Lyapunov function in dissipation form.
\end{proof}

 As an immediate consequence of Proposition~\ref{prop:dissi-impli} we note the following.
\begin{corollary}\label{cor:dissi-imp}
    For system \eqref{eq:linear-system},
    every coercive eOSS Lyapunov function in dissipation form is also a coercive eOSS Lyapunov function in implication form.
\end{corollary}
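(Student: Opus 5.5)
The corollary follows from the direction (i) $\Rightarrow$ (ii) of Proposition~\ref{prop:dissi-impli}, so the plan is to extract that direction and check that nothing beyond it is needed.

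Let $V$ be a coercive eOSS Lyapunov function in dissipation form. By Definition~\ref{def:eOSS LF-dissipative}, items (i) and (ii) hold, and there are $\alpha>0$ and $\sigma\in\mathcal{K}_\infty$ for which \eqref{eoss LF2} holds. Items (i) and (ii) are shared verbatim by the implication form, so only (iii') needs proof.

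For (iii'), I will take the explicit gain used in the proof of Proposition~\ref{prop:dissi-impli}:
\[
\chi(r):=\Bigl(\tfrac{2}{\alpha}\sigma(r)\Bigr)^{1/p}, \qquad \tilde\alpha:=\tfrac{\alpha}{2}.
\]
This $\chi$ belongs to $\mathcal{K}_\infty$, because $r\mapsto r^{1/p}$ is a continuous, strictly increasing, unbounded bijection of $\mathbb{R}_+$ fixing $0$, and composing it with $\tfrac{2}{\alpha}\sigma\in\mathcal{K}_\infty$ preserves all of these properties.

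Now fix $x\in X$ with $\|x\|_X\ge\chi(\|Cx\|_Y)$. Raising both sides to the power $p$ gives $\sigma(\|Cx\|_Y)\le\tfrac{\alpha}{2}\|x\|_X^p$. Inserting this into \eqref{eoss LF2} yields
\[
\dot V(x)\le-\alpha\|x\|_X^p+\tfrac{\alpha}{2}\|x\|_X^p=-\tfrac{\alpha}{2}\|x\|_X^p,
\]
which is exactly \eqref{eq:aboss Implication}. Hence $V$ is a coercive eOSS Lyapunov function in implication form.

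The argument has no real obstacle. The only points to check are that $\chi$ lies in $\mathcal{K}_\infty$, and that the exponent $p$ in the implication form is the same as in the coercivity estimate \eqref{eq:eoss LF1}; both hold by construction. The additional estimate \eqref{eq:addassm} from Proposition~\ref{prop:dissi-impli}(ii) is not needed for the corollary.
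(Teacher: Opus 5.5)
Your proposal is correct and takes exactly the paper's route: the corollary is the direction (i) $\Rightarrow$ (ii) of Proposition~\ref{prop:dissi-impli}, using the same gain $\chi(r)=\bigl(\tfrac{2}{\alpha}\sigma(r)\bigr)^{1/p}$ and decay rate $\tilde\alpha=\tfrac{\alpha}{2}$. You are also right that the additional estimate \eqref{eq:addassm} plays no role here.
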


\begin{remark}
Indeed, Proposition~\ref{prop:no eOSS LF} below provides an eOSS system
which admits a coercive eOSS Lyapunov function in implication form, but does
not admit any eOSS Lyapunov function in dissipation form. In view of
Proposition~\ref{prop:dissi-impli}, this shows that the additional assumption \eqref{eq:addassm} cannot be omitted.
\end{remark}

\begin{corollary}\label{cor:di-im-lg}
    For system \eqref{eq:linear-system}, if  $V$ is a coercive eOSS Lyapunov function in dissipation form with linear Lyapunov gain and $p=1$, then $V$ is a coercive eOSS Lyapunov function in implication form with linear Lyapunov gain and $p=1$. 
\end{corollary}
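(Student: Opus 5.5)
The plan is to repeat the (i) $\Rightarrow$ (ii) step of the proof of Proposition~\ref{prop:dissi-impli}, now keeping track of linearity. That step builds the implication gain $\chi$ from the dissipation gain $\sigma$ via $\chi(r)=\bigl(\tfrac{2}{\alpha}\sigma(r)\bigr)^{1/p}$. With $p=1$ the exponent $1/p$ disappears, so a linear $\sigma$ gives a linear $\chi$.

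Concretely, let $V$ be a coercive eOSS Lyapunov function in dissipation form with $p=1$ and linear gain $\sigma(r)=cr$ for some $c>0$. We may take $c>0$ because $\sigma\in\mathcal{K}_\infty$ forces $\sigma\neq 0$. Items (i) and (ii) of Definition~\ref{def:eOSS LF-dissipative} are unchanged, still with $p=1$. Set $\chi(r):=\tfrac{2c}{\alpha}r$; this is linear and belongs to $\mathcal{K}_\infty$.

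Now take $x\in X$ with $\|x\|_X\ge \chi(\|Cx\|_Y)$. Then $c\|Cx\|_Y\le \tfrac{\alpha}{2}\|x\|_X$, and \eqref{eoss LF2} gives
\[
\dot V(x)\le -\alpha\|x\|_X+c\|Cx\|_Y\le -\tfrac{\alpha}{2}\|x\|_X .
\]
This is \eqref{eq:aboss Implication} with $\tilde\alpha=\alpha/2$, $p=1$, and the linear gain $\chi$.

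I do not expect any real obstacle. The only point to check is that Corollary~\ref{cor:dissi-imp} applied as a black box would not by itself give a linear $\chi$ for general $p$: the map $r\mapsto (cr)^{1/p}$ is linear only when $p=1$. That is exactly why the hypothesis $p=1$ is imposed, so the explicit formula for $\chi$ has to be used rather than just the corollary.
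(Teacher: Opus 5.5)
Your proposal is correct and is exactly the paper's argument: the paper proves this corollary by pointing to the choice $\chi(r)=\bigl(\tfrac{2}{\alpha}\sigma(r)\bigr)^{1/p}$ in the (i) $\Rightarrow$ (ii) part of Proposition~\ref{prop:dissi-impli}, which for $p=1$ and $\sigma(r)=cr$ is the linear gain $\chi(r)=\tfrac{2c}{\alpha}r$ with $\tilde\alpha=\alpha/2$, just as you wrote out.
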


\begin{proof}
 This follows directly from the definition of $\chi$ in the proof of Proposition~\ref{prop:dissi-impli}, in the part (i) $\Rightarrow$ (ii).   
\end{proof}

\begin{remark}   

In view of the computations \eqref{eq:Making-LF-better}, on the complementary region $\|x\|_{X} <\chi(\|Cx\|_{Y})$, we have
\[
\exists \eta\in\mathcal{K}_\infty: \ \dot{V}(x)\leq \eta(\|Cx\|_Y) \qiq \exists \sigma\in\mathcal{K}_\infty:  \ \dot{V}(x)\le -\tilde{\alpha}\|x\|_X^p+\sigma(\|Cx\|_Y).    
\]

This shows that the additional estimate in \eqref{eq:addassm} can be expressed in the same form as the dissipation inequality on the complementary region.
These complementary regions (defined for linear $\chi$ in \eqref{eq:E-and-D-sets}) play an important role in the proof of the converse Lyapunov theorem (Theorem~\ref{thm:convserse-eOSS}).

\end{remark}

\begin{remark}[Non-coercive eOSS Lyapunov functions]
In \cite{mironchenko2017characterizations,JMP20} the concept of a non-coercive ISS Lyapunov function has been proposed.
Here non-coercivity means that $V$ just satisfies 
\begin{equation}
 \label{eq:noncoercive}
     V(0)=0,\quad 0<V(x)\le k_2\|x\|_X^p,\quad x\in X\setminus\{0\}.
 \end{equation}
 It was shown that under certain assumptions, the existence of such a Lyapunov function implies input-to-state stability of a control system with inputs (see \cite{mironchenko2017characterizations,JMP20} for definitions). Similarly, one could introduce a \emph{non-coercive eOSS Lyapunov function}
 as a function $V: X\to \R_+$ for which conditions (ii) and (iii), respectively conditions (ii) and (iii'), of Definition~\ref{def:eOSS LF-dissipative} hold, while condition (i) is replaced by \eqref{eq:noncoercive}.
 
The coercivity of $V$ is not used in the proof of Proposition~\ref{prop:dissi-impli}. Hence, the same argument yields the corresponding equivalence for non-coercive eOSS Lyapunov functions.
Note, however, that right now it is not known whether non-coercive OSS Lyapunov functions can be exploited to infer OSS of dynamical systems.
\end{remark}

\section{Converse eOSS Lyapunov theorems}
\label{sec:Converse eOSS Lyapunov theorems}

In \cite{chen2025lyapunov}, we have shown that the existence of an OSS Lyapunov function implies OSS of a nonlinear system. For linear systems, this result can be used to prove that the existence of a coercive eOSS Lyapunov function implies eOSS. As in many cases, constructing an eOSS Lyapunov function is the most realistic way to prove eOSS, which makes direct Lyapunov results significant for eOSS theory. 
In this section, we show a converse result: For every eOSS system there exists an eOSS Lyapunov function in implication form. As it turns out, the type of Lyapunov function is central at this point as a similar statement is false for Lyapunov functions in dissipation form, as we will see in Section~\ref{sec:counterexam}.
Furthermore, we show that eOSS is equivalent to the following property: there are uniform bounds of exponential decay such that on any interval, on which the state dominates the output, the state satisfies this uniform bound.

\begin{definition}\label{UGESMO}  System \eqref{eq:linear-system} is called \emph{uniformly globally exponentially  stable modulo output (UGESMO)}, if there exist $\rho>0$ and $M,\mu>0 $ such that for all $x\in X$ and all $\tau\geq0$, if
\begin{align*}
\|\phi(t,x)\|_X\geq\rho \|y(t,x)\|_Y \quad\forall t\in[0,\tau], 
\end{align*} 
then 
\begin{equation} 
\label{eq:UGESMO-decay}
\|\phi(t,x)\|_X\leq M e^{-\mu t}\|x\|_X \quad\forall t\in[0,\tau]. 
\end{equation} 
\end{definition}

The nonexponential version of UGESMO was used in \cite{krichman2001input}, and was our primary motivation. 
For infinite-dimensional systems, it was related to OSS in \cite{chen2025lyapunov}.

\begin{remark}
\label{OSS-implication-form}  
Another implication-form formulation of eOSS is as follows:

System \eqref{eq:linear-system} is called \emph{exponentially OSS in implication form}, if there exist $\rho>0$ and $M,\mu>0 $ such that for all $x\in X$ and $t \ge 0$, we have:
\begin{align*}
\|\phi(t,x)\|_X\geq\rho \max_{s\in[0,t]} \|y(s,x)\|_Y \srs  \|\phi(t,x)\|_X\leq M e^{-\mu t}\|x\|_X. \end{align*} 

It is easy to see that exponential OSS in implication form is equivalent to eOSS. Its connection to UGESMO is not that immediate, as the premises in these properties are different. Our results below show, however, that these properties are  equivalent.
\end{remark}

The next theorem establishes a converse Lyapunov theorem for eOSS and proves that eOSS is equivalent to UGESMO.

\begin{theorem}[Lyapunov criterion for eOSS] 
\label{thm:direct eOSS}
   The following are equivalent:
     \begin{enumerate}[(i)]        
\item System~\eqref{eq:linear-system} admits a coercive eOSS Lyapunov function in implication form.

\item System \eqref{eq:linear-system} is eOSS.

\item System \eqref{eq:linear-system} is UGESMO.
\end{enumerate}

\end{theorem}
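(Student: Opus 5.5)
The plan is to prove the cycle (i) $\Rightarrow$ (ii) $\Rightarrow$ (iii) $\Rightarrow$ (ii) $\Rightarrow$ (i). By Proposition~\ref{prop:equi OSS eOSS LG} we may always assume eOSS holds with linear gain, i.e.
\[
\|T(t)x\|_X\le Me^{-\mu t}\|x\|_X+K\max_{s\in[0,t]}\|y(s,x)\|_Y .
\]

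For (i) $\Rightarrow$ (ii) I would use the direct OSS Lyapunov theorem of \cite{chen2025lyapunov}. If it must be made self-contained, the argument is a comparison-principle argument on $t\mapsto V(\phi(t,x))$, using continuity of this map and the Dini derivative bound. As long as $\|\phi(t,x)\|_X\ge\chi(\|y(t,x)\|_Y)$, we have $D^+V\le-\tilde\alpha\|\phi\|^p\le-\frac{\tilde\alpha}{k_2}V$, so $V$ decays exponentially. Once the trajectory enters the region $\|\phi\|<\chi(\|y\|)$, the value of $V$ is at most $k_2\chi(\max\|y\|)^p$, and it can only exceed this level again by decreasing. Sandwiching with \eqref{eq:eoss LF1} then gives an OSS estimate, and Proposition~\ref{prop:equi OSS eOSS LG} upgrades it to eOSS.

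For (ii) $\Rightarrow$ (iii), take $\rho:=2K$. Suppose $\|\phi(t,x)\|\ge\rho\|y(t,x)\|$ on $[0,\tau]$, and fix $t\le\tau$. Choose $s^*\in[0,t]$ maximising $\|y(s,x)\|$ on $[0,t]$. Applying the linear-gain eOSS bound at the time $s^*$ gives
\[
\|T(s^*)x\|\le Me^{-\mu s^*}\|x\|+\tfrac12\|T(s^*)x\|,
\]
so $K\|y(s^*,x)\|\le\frac12\|T(s^*)x\|\le Me^{-\mu s^*}\|x\|\le M\|x\|$. Inserting this into the eOSS bound at time $t$ only gives a constant multiple of $\|x\|$, not decay. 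I would repair this by iterating the argument on windows $[jt_1,(j+1)t_1]$, exactly as in the proof of Proposition~\ref{prop:equi OSS eOSS LG}. On each window where the state dominates the output, the estimate \eqref{eq:key-step} can be re-derived with the output term absorbed, yielding a contraction factor $\delta<1$ per window. The decay of the form \eqref{eq:UGESMO-decay} then follows by the same interpolation as before.

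For (iii) $\Rightarrow$ (ii), argue as in the first case of the proof of Proposition~\ref{prop:equi OSS eOSS LG}, after normalising $\|x\|=1$. If the output stays below $\varepsilon$ on $[0,t_1]$ with $\varepsilon$ small, the state cannot fall below $\rho\varepsilon$ before time $t_1$ without already being small. So either UGESMO applies on the whole window, or the state became small at some intermediate time and stays controlled after it by $\sup_{[0,t_1]}\|T(s)\|$. In both cases we recover \eqref{eq:key-step}, and eOSS follows from the iteration already carried out there.

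The main step, and the expected obstacle, is (ii) $\Rightarrow$ (i): constructing a coercive, trajectory-continuous $V$ (with $p=1$ and linear $\chi$). My first attempt would be
\[
V(x):=\sup_{t\ge0}e^{\nu t}\Bigl(\|T(t)x\|_X-2K\max_{s\in[0,t]}\|y(s,x)\|_Y\Bigr)_+ \;+\;c\|x\|_X ,\qquad 0<\nu<\mu .
\]
The sup term is bounded by $M\|x\|$ by eOSS. On the region $\|x\|\ge\chi\|Cx\|$ with $\chi$ large, the $t=0$ term dominates, which should give a lower bound. Shifting along the trajectory yields a factor $e^{-\nu h}$ plus a correction coming from the past-output maximum. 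The delicate points are twofold. First, the additive term $c\|x\|$, which is there to ensure global coercivity, has an uncontrolled Dini derivative. Second, the output maximum is not shift-invariant. If this fails, I would instead restrict the supremum to times before the first exit from the cone $\{\|\phi\|\ge\rho\|y\|\}$ and use UGESMO, with coercivity coming from $t=0$. Continuity in $t$ would then have to be checked via strong continuity of $T$ and $y$.
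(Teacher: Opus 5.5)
\textbf{The gap is in (ii)$\Rightarrow$(i).} Your directions (i)$\Rightarrow$(ii) and (iii)$\Rightarrow$(ii) are fine. The latter is a neat direct argument: with $\|x\|_X=1$ and output below $\varepsilon$ on $[0,t_1]$, either UGESMO applies on all of $[0,t_1]$, or some $t^*$ has $\|T(t^*)x\|_X<\rho\varepsilon$, and then $\|T(t_1)x\|_X\le\rho\varepsilon\sup_{s\in[0,t_1]}\|T(s)\|$. The paper bypasses this by going (iii)$\Rightarrow$(i)$\Rightarrow$(ii). The real problem is the step you flag yourself: neither of your candidates yields a valid Lyapunov function.

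In the first candidate, shifting by $h$ replaces $\max_{[0,t+h]}\|y\|_Y$ by $\max_{[h,t+h]}\|y\|_Y$, which can only enlarge the bracket. Suppose the supremum is attained at some $t^*>0$ at which the running output maximum is still $\|Cx\|_Y$. Then the shift adds, up to a factor $e^{-\nu h}$, the term $2Ke^{\nu t^*}\bigl(\|Cx\|_Y-\max_{[h,t^*]}\|y\|_Y\bigr)$. This is nonnegative and in general positive, and you have nothing to dominate it with except the $O(h)$ gain from $e^{-\nu h}$; for $x\notin D(A)$ it need not even be $O(h)$. In addition, $c\|x\|_X$ has no usable Dini bound unless the norm is replaced by something like $N$ in \eqref{eq:normdef}.

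Your fallback (supremum up to the first exit from the cone) does give $V(T(h)x)\le e^{-\nu h}V(x)$ inside the cone. However, it is \emph{not continuous along trajectories}. Take a time $t_0$ at which a trajectory passes from $\mathcal D_\rho$ into $\mathcal E_\rho$ and stays there for a time $L>0$. From one side of $t_0$ the values of $V$ approach $\|T(t_0)x\|_X$; from the other side they approach $\sup_{0\le s\le L}e^{\nu s}\|T(t_0+s)x\|_X$. For the isometric translation group of Proposition~\ref{prop:no eOSS LF}, which is eOSS, the latter equals $e^{\nu L}\|T(t_0)x\|_X$. This jump is produced by the exit-time functional itself, so strong continuity of $T$ and $y$ cannot repair it.

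\textbf{The missing idea.} The heart of Theorem~\ref{thm:convserse-eOSS} is to switch the decay-producing exit-time term off continuously before the cone boundary, and to let a separate term carry coercivity. The paper sets
$V=N+K\,\theta\bigl(\|x\|_X/(\|x\|_X+\|Cx\|_Y)\bigr)W$ on $\mathcal E_\rho$ and $V=N$ on $\mathcal D_\rho$, where:
\begin{itemize}
\item $W(x)=\int_0^{\lambda(x)}\|T(s)x\|_X\,\mathrm ds$ (your restricted supremum would also do);
\item $N$ is the equivalent norm \eqref{eq:normdef}, which satisfies $\dot N(x)\le\omega_1M_1\|x\|_X$;
\item $\theta$ is continuous, vanishes for ratios $\le\rho/(1+\rho)$, and equals $1$ for ratios $\ge\rho_1/(1+\rho_1)$.
\end{itemize}
Since $W(x)\le\frac{M}{\mu}\|x\|_X$ by UGESMO, the vanishing factor $\theta$ removes the jump at boundary crossings. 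The implication is then checked only on the thinner cone $\|x\|_X\ge\rho_2\|Cx\|_Y$ with $\rho_2>\rho_1$. There $\theta\equiv1$ along the trajectory for short times, which gives $\dot V(x)\le-(K-\omega_1M_1)\|x\|_X$ once $K>\omega_1M_1$.

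\textbf{A smaller point in (ii)$\Rightarrow$(iii).} With $\rho=2K$ fixed in advance, absorbing the output into the window-start value only gives $\max_{[s,s+t_1]}\|\phi(\cdot,x)\|_X\le2M\|\phi(s,x)\|_X$. The resulting factor $Me^{-\mu t_1}+M$ is not a contraction. You can fix this in either of two ways:
\begin{itemize}
\item take $\rho=K/a$ with $a<1/(M+1)$, as in Proposition~\ref{prop:eOSS-ugesmo}; or
\item compare with $G_k:=\sup_{[kt_1,\tau]}\|\phi(\cdot,x)\|_X$, for which $G_{k+1}\le(Me^{-\mu t_1}+\tfrac12)G_k$, so $\rho=2K$ does suffice.
\end{itemize}
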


\begin{proof}
The proof follows from the propositions established below.

(i) $\Rightarrow$ (ii).
Since every coercive eOSS Lyapunov function is also a coercive OSS Lyapunov function, \cite[Theorem 3.2]{chen2025lyapunov} proves OSS of \eqref{eq:linear-system}. Proposition~\ref{prop:equi OSS eOSS LG} shows eOSS.

(ii) $\Rightarrow$ (iii).
    In view of Proposition~\ref{prop:equi OSS eOSS LG}, eOSS is equivalent to eOSS with linear gain. Proposition~\ref{prop:eOSS-ugesmo} proves the implication.

(iii) $\Rightarrow$ (i). This follows from Theorem~\ref{thm:convserse-eOSS}.
\end{proof}

Next, we state and prove the results that we have used to prove the above Lyapunov criterion for eOSS.

\begin{proposition}\label{prop:eOSS-ugesmo}
    If system \eqref{eq:linear-system} is eOSS with linear gain, then it is UGESMO.
\end{proposition}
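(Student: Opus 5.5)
The idea is to apply the eOSS estimate with linear gain only on short windows. On such a window the output can be bounded by the state at the start of the window. This avoids the circularity that arises because $\max_{s\in[0,t]}\|y(s,x)\|_Y$ involves the whole past, where the state may be larger than at time $t$.

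Assume \eqref{eOSS} holds with $\gamma(r)=g r$ for some $g>0$, together with $M,\mu>0$. Set $K:=\sup_{s\in[0,t_1]}\|T(s)\|\ge 1$, where $t_1>0$ is chosen so that $Me^{-\mu t_1}\le \tfrac14$. Then choose $\rho:=4gK$.

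Fix $x\in X$ and $\tau\ge0$, and assume $\|\phi(t,x)\|_X\ge\rho\|y(t,x)\|_Y$ for all $t\in[0,\tau]$. Let $j\ge0$ satisfy $(j+1)t_1\le\tau$, and put $z:=T(jt_1)x$.
\begin{itemize}
\item For $s\in[0,t_1]$ the premise gives
\[
\|y(s,z)\|_Y=\|CT(jt_1+s)x\|_Y\le \rho^{-1}\|T(s)z\|_X\le \rho^{-1}K\|z\|_X .
\]
\item Applying the eOSS estimate to $z$ at time $t_1$ then gives
\[
\|T(t_1)z\|_X\le Me^{-\mu t_1}\|z\|_X+g\rho^{-1}K\|z\|_X\le \tfrac14\|z\|_X+\tfrac14\|z\|_X=\tfrac12\|z\|_X .
\]
\end{itemize}
Only times in $[jt_1,(j+1)t_1]\subset[0,\tau]$ are used here, so the premise is available throughout. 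Iterating yields $\|T(jt_1)x\|_X\le 2^{-j}\|x\|_X$ for every $j$ with $jt_1\le\tau$; the case $j=0$ is trivial, and each induction step uses only windows inside $[0,\tau]$.

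For an arbitrary $t\in[0,\tau]$, write $t=jt_1+r$ with $r\in[0,t_1)$. Since $jt_1\le t\le\tau$, we get
\[
\|T(t)x\|_X\le K\|T(jt_1)x\|_X\le K2^{-j}\|x\|_X .
\]
Because $j>t/t_1-1$, this is at most $2K e^{-\frac{\ln 2}{t_1}t}\|x\|_X$. This is \eqref{eq:UGESMO-decay} with constants $2K$ and $\ln2/t_1$, which are independent of $x$ and $\tau$. Hence the system is UGESMO.

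The main obstacle is this short-window bookkeeping. The constant $\rho$ must absorb the factor $K$ coming from the growth of the semigroup within a window. One must also check that the premise is only ever invoked at times in $[0,\tau]$, including on the final partial window, where we do not use eOSS at all but only the bound $K$.
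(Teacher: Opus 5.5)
Your proof is correct. Its overall structure matches the paper's: a contraction over windows of length $t_1$ lying inside $[0,\tau]$, iteration, and a separate treatment of the final partial window.

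The genuine difference is how you control the state inside a window. You use the a priori bound $K=\sup_{s\in[0,t_1]}\|T(s)\|$ of the $C_0$-semigroup for two purposes: to bound the output on $[jt_1,(j+1)t_1]$ by $\rho^{-1}K\|T(jt_1)x\|_X$, and to handle the final partial window. You then absorb $K$ into the choice $\rho=4gK$.

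The paper instead derives the within-window bound $\max_{t\in[s,s+s_1]}\|\phi(t,x)\|_X\le \frac{M}{1-a}\|\phi(s,x)\|_X$ from the eOSS estimate itself. The premise gives $k\|y(t,x)\|_Y\le a\|\phi(t,x)\|_X$ with $a<1$. Applying eOSS from $\phi(s,x)$ and taking the maximum over the window, the output term is absorbed into the left-hand side.

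Your route is shorter and gives a clean contraction factor $\tfrac12$. The paper's route never invokes the growth bound of $T(\cdot)$, so $\rho$ and the UGESMO constants depend only on the eOSS data $M,\mu,k$. It also uses only the eOSS inequality and the cocycle property. That makes it the more natural argument to transfer to settings such as nonlinear systems, where no uniform a priori bound on trajectories over short time intervals is available.
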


\begin{proof}
    By assumption, there exist $M,\mu,k>0$ such that for all $x\in X$ and all $t\ge0$
    \begin{align}\label{eq:eoss-assm}
        \|\phi(t,x)\|_X\le M e^{-\mu t}\|x\|_X + k \max\limits_{s\in[0,t]}\|y(s,x)\|_Y. 
    \end{align}
Take $a\in(0, \min\{1,\frac{1}{M+1}\})$ and choose $\rho :=\frac{k}{a}$. Fix $x\in X$ and $\tau\ge0$ and  assume that 
\begin{equation}
\label{eq:UGESMO-cond-in-propeOSS-ugesmo}
\|\phi(t,x)\|_X\geq\rho \|y(t,x)\|_Y \quad\forall t\in[0,\tau].
\end{equation} 
It is our aim to construct $\tilde{M},\tilde{\mu} >0$ for which the estimate \eqref{eq:UGESMO-decay} is satisfied, if \eqref{eq:UGESMO-cond-in-propeOSS-ugesmo} holds.
From \eqref{eq:UGESMO-cond-in-propeOSS-ugesmo} we obtain for all $t\in[0,\tau]$ that
\begin{align*}
k\|y(t,x)\|_Y\le a \|\phi(t,x)\|_X.
\end{align*}
Hence for every interval $[s,s+s_1]\subset[0,\tau]$
\begin{align}\label{eq:maxoninterval}
\max_{t\in[s,s+s_1]} k\|y(t,x)\|_Y\le \max_{t\in[s,s+s_1]}a \|\phi(t,x)\|_X.  
\end{align}

For every $r\in[0,s_1]$, we obtain with \eqref{eq:eoss-assm}
\begin{align}
\label{eq:cocycle-aux}
\|\phi(s+r,x)\|_X=\|\phi(r,\phi(s,x))\|_X &\le M e^{-\mu r }\|\phi(s,x)\|_X + k \max\limits_{t\in[0,r]}\|y(t,\phi(s,x))\|_Y\notag \\
    &= M e^{-\mu r }\|\phi(s,x)\|_X + k \max\limits_{t\in[s,s+r]}\|y(t,x)\|_Y.
\end{align}

Taking the maximum over $r\in[0,s_1]$  and using \eqref{eq:maxoninterval} we obtain 
\begin{align*}
    \max_{t\in[s,s+s_1]} \|\phi(t,x)\|_X\le  M \|\phi(s,x)\|_X+\max_{t\in[s,s+s_1]}a \|\phi(t,x)\|_X.
\end{align*}
Combined with $a<1$, this implies 
\begin{align}\label{eq:stau}
    \max_{t\in[s,s+s_1]} \|\phi(t,x)\|_X\le \frac{M}{1-a} \|\phi(s,x)\|_X.
\end{align} 

Applying \eqref{eq:maxoninterval} again yields
\begin{align*} \max_{t\in[s,s+s_1]} k\|y(t,x)\|_Y\le \frac{aM}{1-a}\|\phi(s,x)\|_X.  \end{align*}

Consequently, by \eqref{eq:cocycle-aux} with $r=s_1$, we have 
\begin{equation}
\label{eq:UGESMO-estimate1}
   \|\phi(s+s_1,x)\|_X\le \Big(M e^{-\mu s_1 } +\frac{aM}{1-a}\Big)\|\phi(s,x)\|_X.  
\end{equation}

As $a < \frac{1}{M+1}$, it holds that $\frac{aM}{1-a}<1$. Now choose $t_1>0$ such that 
\begin{equation*}
  \delta:= M e^{-\mu t_1 } +\frac{aM}{1-a}<1.  
\end{equation*}
We define $\tilde{\mu}:=- \frac{1}{t_1}\log \delta >0$ and it remains to construct a suitable $\tilde{M}>0$ to obtain the desired UGESMO decay estimate \eqref{eq:UGESMO-decay}.

 Consider the two cases $t_1>\tau$ and $t_1 \le \tau$ with $\tau$ chosen before \eqref{eq:UGESMO-cond-in-propeOSS-ugesmo}.

If $\tau<t_1$, then by \eqref{eq:stau} with $s=0$ and $s_1=\tau$, for every $t\in[0,\tau],$
\begin{align*}
    \|\phi(t,x)\|_X\le \frac{M}{1-a}\|x\|_X.
\end{align*}

Since $t\le t_1$, we have $1\le e^{\tilde{\mu}t_1}e^{-\tilde{\mu}t}$ and hence
\begin{align*}
\|\phi(t,x)\|_X\le \frac{M e^{\tilde{\mu}t_1}}{1-a}e^{-\tilde{\mu}t}\|x\|_X =: \tilde{M}_1e^{-\tilde{\mu}t}\|x\|_X, \quad t\in [0,\tau].
\end{align*}
Thus the estimate \eqref{eq:UGESMO-decay} holds with the constants $\tilde{\mu},\tilde{M}_1$. Moreover, these constants only depend on  $M,\mu,k$, and are independent of $\tau$ and $x$.

We now consider the case $\tau\ge t_1$. From \eqref{eq:UGESMO-estimate1} applied to the case $s_1=t_1$ we obtain for all $s\in[0,\tau-t_1]$ that 
\begin{equation}
\label{eq:UGESMO-estimate2}
   \|\phi(s+t_1,x)\|_X\le \delta\|\phi(s,x)\|_X. 
\end{equation}
Let $t\in[0,\tau]$ and $t=nt_1+r_1$ with $n\in \N \cup\{0\}$ and $r_1\in[0,t_1).$ From \eqref{eq:UGESMO-estimate2} we obtain $\|\phi((\ell+1)t_1,x)\|_X\le \delta\|\phi(\ell t_1,x)\|_X$, $\ell=0,\ldots,n-1$ and by iteration we obtain
\begin{equation*}
  \|\phi(nt_1,x)\|_X\le \delta^n\|x\|_X.  
\end{equation*}

Using \eqref{eq:stau} with $s=nt_1$ and $s_1=r_1$, we get 
\begin{align*} \|\phi(t,x)\|_X\le \frac{M}{1-a}\|\phi(nt_1,x)\|_X\le  \frac{M}{1-a}\delta^n\|x\|_X.   \end{align*}
Since $nt_1 > t-t_1,$ we have $\delta^n\le \frac{1}{\delta}e^{\frac{\ln \delta}{t_1}t} = \frac{1}{\delta} e^{-\tilde{\mu}t}$. Setting $\tilde{M}_2 := \frac{M}{(1-a)\delta}$, we obtain $\|\phi(t,x)\|_X\le \tilde{M}_2 e^{-\tilde{\mu}t}\|x\|_X$ for all $t\in[0,\tau].$ 
The desired UGESMO decay estimate \eqref{eq:UGESMO-decay} now holds with the constructed $\tilde{\mu}$ and $\tilde{M}:= \max\{ \tilde{M}_1,\tilde{M}_2\}$.
\end{proof}

Now we can prove the first converse Lyapunov theorem for eOSS for linear infinite-dimensional systems with bounded output operators.

\begin{theorem}[Converse eOSS Lyapunov theorem]\label{thm:convserse-eOSS}
Suppose that system \eqref{eq:linear-system} is UGESMO.
Then it admits a coercive eOSS Lyapunov function in implication form. Moreover,  this function can be chosen to be $1$-absolutely homogeneous, with linear Lyapunov gain and $p=1$. 
In particular, this applies if system \eqref{eq:linear-system} is eOSS.
\end{theorem}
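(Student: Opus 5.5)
The plan is to build $V$ directly from the UGESMO decay estimate, in the spirit of the classical converse construction $\sup_{t\ge0}e^{\nu t}\|T(t)x\|_X$ for UES semigroups. The difference is that the supremum is taken only over the time interval on which the state dominates the output.

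\textbf{Step 1: UGESMO gives eOSS with linear gain.} Let $\rho,M,\mu$ be as in Definition~\ref{UGESMO}. I would first record that UGESMO by itself already yields eOSS with linear gain, so that no circularity with Theorem~\ref{thm:direct eOSS} arises. Fix $x$ and $t$, and let $t_0\le t$ be the last time in $[0,t]$ with $\|\phi(t_0,x)\|_X\le\rho\|y(t_0,x)\|_Y$; this set is closed, so $t_0$ exists if the set is nonempty. On $(t_0,t]$ the state dominates the output. Applying \eqref{eq:UGESMO-decay} to $\phi(t_0,x)$, together with continuity at $t_0$, gives
\[
\|\phi(t,x)\|_X\le M\rho\,\|y(t_0,x)\|_Y .
\]
If no such $t_0$ exists, we get $\|\phi(t,x)\|_X\le Me^{-\mu t}\|x\|_X$ instead. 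In either case
\[
\|\phi(t,x)\|_X\le Me^{-\mu t}\|x\|_X+M\rho\max_{s\in[0,t]}\|y(s,x)\|_Y .
\]

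\textbf{Step 2: the candidate function.} Fix $\nu\in(0,\mu)$. For $x\in X$ let
\[
\tau(x):=\sup\{\tau\ge0 : \|T(t)x\|_X\ge\rho\|CT(t)x\|_Y \ \ \forall t\in[0,\tau]\},
\]
with $\tau(x)=0$ if the condition fails already at $t=0$. Set
\[
V(x):=\sup_{t\in[0,\tau(x)]}e^{\nu t}\|T(t)x\|_X .
\]
Then $V$ is $1$-absolutely homogeneous, because $\tau(ax)=\tau(x)$ for $a\neq0$. Coercivity with $p=1$ holds with $k_1=1$ (take $t=0$) and $k_2=M$, by \eqref{eq:UGESMO-decay} and $\nu<\mu$. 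For the decay, take $\chi(r):=2\rho r$. If $\|x\|_X\ge 2\rho\|Cx\|_Y$ and $x\neq0$, continuity gives $\tau(x)>0$, and $\tau(T(h)x)=\tau(x)-h$ for small $h>0$. Hence
\[
V(T(h)x)=e^{-\nu h}\sup_{s\in[h,\tau(x)]}e^{\nu s}\|T(s)x\|_X\le e^{-\nu h}V(x),
\]
so $\dot V(x)\le-\nu V(x)\le-\nu\|x\|_X$. This is \eqref{eq:aboss Implication} with $\tilde\alpha=\nu$ and a linear gain.

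\textbf{Step 3: continuity along trajectories (the main obstacle).} I expect condition (ii) of Definition~\ref{def:eOSS LF-dissipative}, continuity of $t\mapsto V(T(t)x)$, to be the hard part. The exit time $\tau(T(t)x)$ may jump when the trajectory touches the cone $\{\|z\|_X=\rho\|Cz\|_Y\}$ and then re-enters the dominating region.

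My first attempt would be to replace the hard cutoff by a penalised supremum. Using the linear-gain bound from Step 1 with a sufficiently large constant $K$, define
\[
V(x):=\max\Bigl\{\|x\|_X,\ \sup_{t\ge0}e^{\nu t}\bigl(\|T(t)x\|_X-K\max_{s\in[0,t]}\|CT(s)x\|_Y\bigr)^+\Bigr\}.
\]
Step 1 makes this finite with $V\le(M+1)\|x\|_X$, and continuity along trajectories follows from the uniform continuity of the integrand in the shift parameter. The decay estimate must then be redone. Shifting by $h$ replaces $\max_{[0,t]}$ of the output by $\max_{[h,t+h]}$, and the resulting error is controlled by $\|CT(s)x\|_Y$ for $s\in[0,h]$. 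On the region $\|x\|_X\ge\chi(\|Cx\|_Y)$ with $\chi$ linear and large, this error should be absorbable into $-\nu\|x\|_X$.

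If this penalised version causes trouble, I would fall back on the cutoff function of Step 2. In that case I would show that jumps of $\tau$ only occur where $\|T(t)x\|_X$ is comparable to $\rho\|CT(t)x\|_Y$, and restore continuity by taking $V$ as a maximum of the cutoff supremum and a multiple of $\|x\|_X$ that dominates it there.

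Finally, the last sentence of the theorem follows from Proposition~\ref{prop:equi OSS eOSS LG} and Proposition~\ref{prop:eOSS-ugesmo}.
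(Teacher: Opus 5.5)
Your Step~1 (a direct proof that UGESMO yields eOSS with linear gain) is correct. So is the decay computation for $V_2(x):=\sup_{t\in[0,\tau(x)]}e^{\nu t}\|T(t)x\|_X$, and you rightly single out continuity along trajectories as the crux. The gap is in the repair you rely on: the penalised supremum does \emph{not} satisfy the implication estimate, for any choice of $K$, $\nu$ and $\chi$. Passing from $x$ to $T(h)x$ deletes $[0,h)$ from the running maximum of the output. The resulting gain $K\bigl(\max_{[0,t^*]}\|CT(s)x\|_Y-\max_{[h,t^*+h]}\|CT(s)x\|_Y\bigr)$ at the maximising time $t^*$ depends on how fast $\|CT(s)x\|_Y$ \emph{decreases} right after $s=0$, not on the size of $\|Cx\|_Y$, and it is amplified by $e^{\nu t^*}$.

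Concretely, take the translation group on $C_{2\pi}(\R,\C)$ with $Cx=x(0)$ from Proposition~\ref{prop:no eOSS LF}, which is UGESMO. Choose $x$ with $\|x\|_X=1$ such that $|x|$ decreases from $\varepsilon$ to $0$ with slope $\kappa$ on $[0,\varepsilon/\kappa]$, vanishes up to some $t_0$ near $2\pi-1$, and then rises with a large slope $S\ge\kappa$ to $1$. Set $t_1:=t_0+\varepsilon/S$. For $K\kappa>2\nu$ and small $\varepsilon$ one computes $V(x)=e^{\nu t_1}(1-K\varepsilon)$ and $V(T(h)x)\ge e^{\nu(t_1-h-\kappa h/S)}(1-K\varepsilon+K\kappa h)$. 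Hence $\dot V(x)\ge e^{\nu t_1}(K\kappa-2\nu)>0$, although $|Cx|/\|x\|_X=\varepsilon$ is arbitrarily small. With $|x(s)|=\varepsilon-\sqrt{s}$ near $s=0$ one even gets $\dot V(x)=+\infty$. So this error cannot be absorbed into $-\nu\|x\|_X$.

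Your fallback points in the right direction, but read literally it also fails. The jumps of $t\mapsto V_2(T(t)x)$ occur exactly at re-entry times, where $z=T(t)x$ lies on the cone $\|z\|_X=\rho\|Cz\|_Y$; there the value jumps from $\|z\|_X$ to $V_2(z)$. A \emph{fixed} multiple $c\|x\|_X$ that covers these jumps must dominate $V_2$ at such points. In the translation example this forces $c\ge e^{2\pi\nu}\ge\sup_{x\neq0}V_2(x)/\|x\|_X$, so $V=c\|x\|_X$, which does not decay along the isometric group.

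The missing idea is to let the switch depend on the ratio $\|x\|_X/(\|x\|_X+\|Cx\|_Y)$. For instance, take $V(x):=\max\{\|x\|_X,\theta(\cdot)V_2(x)\}$, where $\theta$ is continuous, vanishes on the $\rho$-cone and equals $1$ beyond a $\rho_1$-cone, with $\rho<\rho_1<\rho_2$ and $\chi(r)=\rho_2 r$.
\begin{itemize}
\item Since $\theta V_2\le M\theta\|x\|_X\to0$ as the trajectory approaches the cone, $V$ is continuous along trajectories.
\item On $\{\|x\|_X\ge\rho_2\|Cx\|_Y\}$ one has $V=V_2$ along the trajectory for small times, so $\dot V(x)\le-\nu\|x\|_X$.
\end{itemize}
This cutoff is exactly the device of the paper, which uses $N(x)+K\theta(\cdot)W(x)$ with $W(x)=\int_0^{\lambda(x)}\|T(s)x\|_X\,ds$ and the growth-bound norm $N$. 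Once the cutoff is in place, your $V_2$ combined through a maximum works equally well and dispenses with $N$.
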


\begin{proof}
As eOSS implies UGESMO by Proposition~\ref{prop:eOSS-ugesmo}, 
it is sufficient to prove that UGESMO implies the assertions. So assume that
system \eqref{eq:linear-system} is UGESMO with some $\rho, M,\mu>0$. We denote the semigroup associated to \eqref{eq:linear-system} by $T(\cdot)$.
Define $\mathcal{E}_\rho$ as the set in which the premise of Definition~\ref{UGESMO} holds with strict inequality. We thus consider the sets
\begin{align}
\label{eq:E-and-D-sets}
\mathcal{E}_\rho:=\{x\in X\mid \|x\|_X>\rho\|Cx\|_{Y}\}, \quad\mathcal{D}_{\rho}:=X\setminus\mathcal{E}_{\rho}.
\end{align}
Since $C$ is linear,   $\mathcal{D}_{\rho}$ is closed under scalar multiplication, in the sense that if $x \in \mathcal{D}_\rho$, then $a x \in \mathcal{D}_\rho$ for all $a \in\R$. Similarly, $\mathcal{E}_\rho\cup\{0\}$ is 
closed under scalar multiplication.

Note that $\mathcal{D}_{\rho}$ is closed and nonempty since $0\in\mathcal{D}_{\rho}$, and thus $\mathcal{E}_\rho$ is open.
For every $x\in X$, define the first entrance time into the set $\mathcal{D}_{\rho}$ by
\begin{align}
\label{eq:lambda-map}
\lambda(x):=\inf\{t\geq0\mid T(t)x\in\mathcal{D}_{\rho}\},
\end{align}
and set $\lambda(x):=\infty$, if the trajectory never enters $\mathcal{D}_{\rho}$. We have 
\begin{align}
\label{eq:lambda-property}
\lambda(T(t)x)=\lambda(x)-t, \quad x\in\mathcal{E}_\rho, \ t\in[0,\lambda(x)).
\end{align}
We regard $\lambda$ as taking values in $[0,\infty]$, with $\infty-t:=\infty$, $t\geq 0$. Hence, if $\lambda(x)<\infty$, then the map
$t\mapsto \lambda(T(t)x)$
is continuous on $[0,\lambda(x))$; if $\lambda(x)=\infty$, then $\lambda(T(t)x)\equiv\infty$.

Note furthermore that $\lambda$ is invariant under scalar multiplication, i.e., for all $a\in \R\setminus\{0\}$ and all $x
\in X$, since $\mathcal{D}_{\rho}$ is closed under scalar multiplication,
\begin{align}\label{eq:lambda-homo}
\lambda(ax)=\inf\{t\geq0\mid a (T(t)x)\in\mathcal{D}_{\rho}\}=\lambda(x).
\end{align}

\vspace{1mm}\noindent\emph{Step 1: Construction of the eOSS Lyapunov candidate.}
For $x\in\mathcal{E}_\rho,$ define
\begin{align*}
    W(x):=\int_0^{\lambda(x)} \|T(s)x\|_X \, \mathrm{d} s.
\end{align*}
By the definition of UGESMO, we get for $x\in\mathcal{E}_\rho$

\begin{align}\label{eq:W-coer}
    0\le W(x) = \int_0^{\lambda(x)} \|T(s)x\|_X \, \mathrm{d} s \stackrel{\eqref{eq:UGESMO-decay}}{\le} \int_0^{\lambda(x)} Me^{-\mu s}\|x\|_X \, \mathrm{d} s \leq \frac M\mu\|x\|_X.
\end{align}

For $x\in\mathcal{E}_\rho$ and  $0<\eta<\lambda(x)$, we obtain with \eqref{eq:lambda-property} that
\begin{equation}
\label{eq:Wcontinuous}
    \begin{aligned}
W(T(\eta)x)=\int_0^{\lambda(x)-\eta} \|T(s+\eta)x\|_X \, \mathrm{d} s&=\int_\eta^{\lambda(x)} \|T(s)x\|_X \, \mathrm{d} s\\
&=W(x)-\int_0^{\eta} \|T(s)x\|_X \, \mathrm{d} s.
\end{aligned}
\end{equation}

As $\mathcal{E}_\rho$ is open and $t\mapsto T(t) x$ is continuous, this shows that $t\mapsto W(T(t)x)$ is continuous on $[0,\lambda(x))$ provided that $x\in \mathcal{E}_\rho$.

 The Lie derivative of $W$ in $x\in \mathcal{E}_\rho$ is now easily computed as
\begin{align}\label{eq:w-dini}
\dot{W}(x)=\mathop{\overline{\lim}}\limits_{\eta\rightarrow+0}
\frac{1}{\eta} (W(T(\eta)x)-W(x))
=\mathop{\overline{\lim}}\limits_{\eta\rightarrow+0} -\frac{1}{\eta}\int_0^{\eta} \|T(s)x\|_X \, \mathrm{d} s=-\|x\|_X.
\end{align}

Fix $\rho_1, \rho_2$ with $\rho<\rho_1<\rho_2.$  Since the function $r\mapsto\frac{r}{1+r}$ is strictly increasing on $(0, \infty),$ we have
\begin{align*}
    \frac{\rho}{1+\rho}<\frac{\rho_1}{1+\rho_1}<\frac{\rho_2}{1+\rho_2}.
\end{align*}

Choose a continuous function $\theta: [0,1] \to [0,1]$ such that
\begin{align*}
    \theta(s)=
    \begin{cases}
      0, &s\le \frac{\rho}{1+\rho},\\
        1, &s\ge \frac{\rho_1}{1+\rho_1}.
    \end{cases}
\end{align*}
 Then, for every nonzero $x\in X$, the following implication holds
\begin{align}\label{eq:rho2}
\|x\|_X\ge\rho_2\|Cx\|_Y\ge\rho_1\|Cx\|_Y\quad \Rightarrow\quad \theta\left(\frac{\|x\|_X}{\|x\|_X+\|Cx\|_Y}\right)=1.
\end{align}

Since $T(\cdot)$ is a $C_0$-semigroup, there exist $M_1, \omega_0>0$ such that
\begin{align*} \|T(t)\|\le M_1 e^{\omega_0 t}, \quad t\ge0. \end{align*}
Choose $\omega_1>\omega_0$ and define
\begin{equation} 
\label{eq:normdef}
N(x):=\max_{s\ge0}e^{-\omega_1 s}\|T(s)x\|_X,\quad x\in X. \end{equation}

It is well-known, \cite[Chapter II]{engel2000one}, that $N$ is an equivalent norm and in particular continuous. Also
\begin{align}\label{eq:N-coer}
\|x\|_X\le N(x)\le M_1\|x\|_X, \quad x\in X.
\end{align}
 In particular, $t\mapsto N(T(t)x)$ is continuous. Moreover, $N(T(\eta)x)\le e^{\eta \omega_1 }N(x)$ for $\eta>0, x\in X$, and hence for all $x\in X$ the Lie derivative may be estimated as
\begin{align}\label{eq:n-dini}
\dot{N}(x)= \mathop{\overline{\lim}}\limits_{\eta\rightarrow+0} \frac{N(T(\eta)x)-N(x)}{\eta}  &\le \mathop{\overline{\lim}}\limits_{\eta\rightarrow+0} \frac{e^{\eta \omega_1 }N(x)-N(x)}{\eta} \notag\\   
&=\omega_1N(x) 
\le \omega_1 M_1\|x\|_X.
\end{align}

 Choose $K>0$ with
\begin{equation}\label{eq:K}
    K>\omega_1 M_1
\end{equation}
and define
\begin{equation*}
    V(x):=
    \begin{cases}
    N(x)+K\theta\Bigl(\frac{\|x\|_X}{\|x\|_X+\|Cx\|_Y}\Bigr)W(x),&\quad x\in \mathcal{E}_\rho,\\
        N(x),&\quad x\in \mathcal{D}_\rho.
    \end{cases}
\end{equation*}

\vspace{1mm}\noindent\emph{Step 2: Verification of the eOSS Lyapunov  function properties for $V$.}
Let us verify that $V$ satisfies (i), (ii) and (iii') in Definition~\ref{def:eOSS LF-dissipative}.

\emph{(i): Sandwich bounds}. For all $x\in X$, \eqref{eq:W-coer} and \eqref{eq:N-coer} and $0\le \theta \le1$ yield
\begin{align*}
    \|x\|_X\le V(x)\le \bigl(M_1+\frac{KM}{\mu}\bigr)\|x\|_X.
\end{align*}

\emph{(ii): Continuity of $V$ along trajectories}. Fix $x\in X$ and $t\ge0$. We prove that $s\mapsto V(T(s)x)$ is continuous at $s=t.$

If $T(t)x\in \operatorname{int}\mathcal{D}_\rho$, this follows  from the continuity of the norm $N$.

If $T(t)x\in \mathcal E_\rho$, by the openness of $\mathcal E_\rho$ there exists a neighborhood of $t$ on which $T(s)x\in \mathcal E_\rho$. For $s_1<s_2$ in this neighborhood, the entrance-time shift yields
\[
W(T(s_1)x)-W(T(s_2)x)
=
\int_{s_1}^{s_2}\|T(s)x\|_X\,ds.
\]
Hence, $s\mapsto W(T(s)x)$ is continuous at $s=t$, and the continuity of the remaining terms is obvious.

It remains to treat the case $T(t)x\in  \partial  \mathcal{D}_\rho = \partial \mathcal{E}_\rho.$   Since  $T(t)x\in \partial  \mathcal{D}_\rho $  and $\mathcal{D}_\rho$  is closed, we have $T(t)x\in \mathcal{D}_\rho$, and hence $V(T(t)x)=N(T(t)x).$  For $s$ close to $t$,  $T(s)x$  may belong to either $\mathcal{D}_\rho $ or $\mathcal{E}_\rho $. Therefore, continuity at $s=t$ must be verified across the boundary.

Let $(t_n)_{n\in \mathbb{N}}\subset\mathbb{R}_+$ be an arbitrary sequence with $t_n\to t$. Define
\begin{align*} I_D:=\{n\in\mathbb{N}\mid T(t_n)x\in\mathcal{D}_\rho\}, \quad I_E:=\mathbb{N}\setminus I_D.  \end{align*}

We show that $V(T(t_n)x)\to V(T(t)x).$

 For every subsequence $(t_{n_j})_{j\in\mathbb N}$ of $(t_n)_{n\in\mathbb N}$ whose indices belong to $I_D$, we get $V(T(t_{n_j})x) = N(T(t_{n_j})x)$ and the convergence follows directly from the continuity of the norm $N$.

Assume $(t_{n_j})_{j\in\mathbb N}$ is a subsequence of $(t_n)_{n\in\mathbb N}$ whose indices belong to $I_E$. For every $j\in \N$ we have
\begin{align*}
V(T(t_{n_j})&x)
=
N(T(t_{n_j})x)  +
K\theta\left(
\frac{\|T(t_{n_j})x\|_X}
{\|T(t_{n_j})x\|_X+\|CT(t_{n_j})x\|_Y}
\right)
W(T(t_{n_j})x).
\end{align*}
By continuity of $N$, the first term converges to $N(T(t)x)$. We are going to show that the second term converges to $0$.

 If $T(t)x\neq0$, then, as
$j\to \infty$
\begin{align*} 
\frac{\|T(t_{n_j})x\|_X}
{\|T(t_{n_j})x\|_X+\|CT(t_{n_j})x\|_Y}\to\frac{\|T(t)x\|_X}
{\|T(t)x\|_X+\|CT(t)x\|_Y}=\frac{\rho}{1+\rho}.  \end{align*}
Moreover, by \eqref{eq:W-coer}
\begin{align}
    W(T(t_{n_j})x)\le \frac{M}{\mu}\|T(t_{n_j})x\|_X,
\end{align}
and the right-hand side is bounded because $T(t_{n_j})x\to T(t)x$. 
 
Since $\theta$ is continuous and $\theta\bigl(\frac{\rho}{1+\rho}\bigr)=0$, we obtain 
\begin{align*} K\theta\left(
\frac{\|T(t_{n_j})x\|_X}
{\|T(t_{n_j})x\|_X+\|CT(t_{n_j})x\|_Y}
\right)
W(T(t_{n_j})x)\to 0, \quad { j\to \infty}. \end{align*}

If $T(t)x=0$, then $T(t_{n_j})x\to0$. Since $0\le \theta\le 1$, by \eqref{eq:W-coer} we have, as $j\to \infty$,
\begin{align*}
0&\le K\theta\left(
\frac{\|T(t_{n_j})x\|_X}
{\|T(t_{n_j})x\|_X+\|CT(t_{n_j})x\|_Y}
\right)
W(T(t_{n_j})x)\le \frac{KM}{\mu}\|T(t_{n_j})x\|_X\to 0.
\end{align*}
Thus, 
\begin{align*} V(T(t_{n_j})x) \to V(T(t)x),\quad { j\to \infty}.  \end{align*}

Thus $\lim_{j\to\infty}V(T(t_{n_j})x) = V(T(t)x)$.

As the limits for subsequences whose indices belong to $I_D$ respectively $I_E$ coincide, this shows that also for arbitrary subsequences  $(t_{n_j})_{j\in\N}$ we have $\lim_{j\to\infty}V(T(t_{n_j})x) = V(T(t)x)$ and this shows the continuity of $V$ along the trajectory $(T(s)x)_{s\geq 0}$ in $s=t$.

\emph{(iii'): The decay estimate}.  We verify the implication condition \eqref{eq:aboss Implication}. To this end, let $x\in X$ satisfy
$ \|x\|_X\ge \rho_2\|Cx\|_Y$.

If $x=0$, then the desired implication inequality
is immediate since $T(t)0=0$, $t\geq 0$, and $V(0)=0$. Hence, assume that $x\ne0$.
Then
\begin{align}\label{eq:rho_1}
\|x\|_X>\rho_1\|Cx\|_Y.
\end{align}
Indeed, if $Cx\ne0$, this follows from
$\rho_2>\rho_1$; if $Cx=0$, it follows from $\|x\|_X>0$. In particular,
$x\in\mathcal E_\rho$. Moreover, by \eqref{eq:rho2} and \eqref{eq:rho_1}
\begin{align*} 
\theta\left(\frac{\|x\|_X}{\|x\|_X+\|Cx\|_Y}\right)=1. \end{align*}
 By the continuity of $t\mapsto T(t)x$, there exists a sufficiently small $\eta^*>0$ such that 
 \[
 \|T(\eta)x\|_X>\rho_1\|CT(\eta)x\|_Y, \quad \eta \in [0,\eta^*).
 \]
 It follows that
\begin{align*} \theta\left(\frac{\|T(\eta)x\|_X}{\|T(\eta)x\|_X+\|CT(\eta)x\|_Y}\right) =1, \quad { \eta \in [0,\eta^*)}. \end{align*}

As $x\in \cal E_\rho$, we get using \eqref{eq:w-dini} and \eqref{eq:n-dini}
\begin{align*}
  \dot{V}(x)=\mathop{\overline{\lim}}\limits_{\eta\rightarrow+0} \frac{N(T(\eta)x)+KW(T(\eta)x)-N(x)-KW(x)}{\eta} 
  &\le \dot{N}(x)+K \dot{W}(x)\\
 &\le -(K-\omega_1M_1)\|x\|_X.
\end{align*}

Therefore, by \eqref{eq:K}, the required implication condition \eqref{eq:aboss Implication} holds.
We have shown that $V$ is a coercive eOSS Lyapunov function in implication form with linear Lyapunov gain $\chi(r) = \rho_2 r$, $r\geq 0$, $\tilde{\alpha} = K-\omega_1M_1$, and $p=1$.

\vspace{1mm}\noindent\emph{Step 3: Absolute homogeneity.}
Finally, we verify absolute homogeneity. 
By \eqref{eq:lambda-homo}, for all $a\in \R\setminus \{0\}$ and all $x
\in \mathcal{E}_\rho$,
\begin{align*} W(ax)=\int_0^{\lambda(ax)} \|T(s)(ax)\|_X \, \mathrm{d} s  \stackrel{\eqref{eq:lambda-homo}}{=} \int_0^{\lambda(x)} \|T(s)(ax)\|_X \, \mathrm{d} s =|a|W(x). \end{align*}
For every $a\in \R\setminus \{0\}$ and $x\in \mathcal{E}_\rho$, we have 
\begin{align*} \frac{\|ax\|_X}{\|ax\|_X+\|C(ax)\|_Y}=\frac{\|x\|_X}{\|x\|_X+\|Cx\|_Y},  \end{align*}
and hence
\begin{align*} \theta\left(\frac{\|ax\|_X}{\|ax\|_X+\|C(ax)\|_Y}\right)=\theta\left(\frac{\|x\|_X}{\|x\|_X+\|Cx\|_Y}\right). \end{align*}

It is obvious that $N(ax)=|a|N(x)$ for all $a\in\R$ and all $x\in X$.

Combining the above identities, and noting that the case $a=0$ follows from $V(0)=0$, we obtain
\begin{align*}
    V(ax)=|a|V(x),\quad a\in \R,\ x\in X.
\end{align*}
Thus $V$ is $1$-absolutely homogeneous, and the proof is complete.
\end{proof}

\begin{remark}
 Theorem~\ref{thm:convserse-eOSS} differs from the OSS Lyapunov characterization of \cite{sontag1997output} for nonlinear ODE systems both in the formulation and in the proof technique. In \cite{sontag1997output}, Sontag and Wang show that every sufficiently regular OSS ODE system admits a coercive OSS Lyapunov function in dissipation form (which is also a coercive OSS Lyapunov function in implication form). 
 As we discuss in Section~\ref{sec:counterexam} below, in this formulation the result does not hold for linear infinite-dimensional systems, and we can merely ensure the existence of an OSS Lyapunov function in implication form. Even after weakening the conclusion of \cite{sontag1997output}  to implication form, the finite-dimensional proof does not directly carry over to the infinite-dimensional case. An essential step in our proof is to establish continuity along solutions at the boundary between the two regions for the constructed Lyapunov function. The technique by which this is achieved in \cite{sontag1997output} relies on local compactness and the existence of smooth mollifiers. It therefore cannot be transferred to the infinite-dimensional setting. Whether our proof technique can be extended to the nonlinear infinite-dimensional case is not fully clear right now. Some preliminary lemmas were reported in \cite{chen2025lyapunov}, but the main work remains to be done. 
\end{remark}

\section{From detectability to eOSS and back}
\label{sec: exp detectability}

Theorem~\ref{thm:direct eOSS} shows the equivalence of eOSS and of the existence of an eOSS Lyapunov function in an implication form. However, it does not give us an easy-to-use condition for constructing such Lyapunov functions to prove eOSS. Hence, it is also important to identify structural system properties that guarantee the existence of such functions.
One such property is exponential detectability, as defined, e.g., in \cite[Definition 8.1.1]{Curtain2020}.

The aim of this section is twofold. First, we show that exponential detectability implies the existence of a coercive eOSS Lyapunov function in  dissipation form, and hence eOSS itself. 
Afterwards, we summarize the results of this paper in Theorem~\ref{thm:single direction}, giving a precise and exhaustive description of relations between eOSS, existence of an eOSS Lyapunov function, and various detectability concepts to each other.

\begin{definition}\label{def:exp detect}    
System~\eqref{eq:linear-system} is called \emph{exponentially detectable} if there exists
$L\in\mathcal L(Y,X)$ such that $A+LC$ generates a uniformly exponentially stable semigroup $T_{LC}(\cdot)$. 
\end{definition}

The construction of the following coercive eOSS Lyapunov function is motivated by the approach in \cite[Proposition 7]{mironchenko2018lyapunov}, where a similar construction was used for linear systems with bounded input operators under a $0$-UGAS assumption. Here, we adapt this idea to linear systems with bounded output operators under exponential detectability.

\begin{proposition}\label{prop:coer eOSS LF}
  Assume that system \eqref
{eq:linear-system} is exponentially detectable and let 
$T_{LC}(\cdot)$ be as in Definition~\ref{def:exp detect} satisfying \eqref{eOSS} for some $\mu > 0$, $\gamma=0$. For any $\eta \in (0, \mu)$, define
\begin{equation}\label{eq:coer LY}
V^\eta(x) := \max_{s \ge 0} \left\| e^{\eta s} T_{LC}(s) x \right\|_X.
\end{equation}
Then 
    $V^\eta$ is a coercive eOSS Lyapunov function in dissipation form with  linear  $\sigma$ and $p=1$  for system \eqref
{eq:linear-system}.
\end{proposition}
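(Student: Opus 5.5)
Our plan is to verify items (i), (ii), (iii) of Definition~\ref{def:eOSS LF-dissipative} directly for $V^\eta$, with $p=1$. The key structural fact is that the original semigroup is a bounded perturbation of the stable closed-loop one. Since $A = (A+LC) - LC$, the variation of constants formula gives
\[
T(t)x = T_{LC}(t)x - \int_0^t T_{LC}(t-r)\,L C T(r)x\,\mathrm{d}r ,\quad x\in X,\ t\ge 0.
\]

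\emph{Sandwich bounds and continuity.} We write $\|T_{LC}(s)\|\le Me^{-\mu s}$. Taking $s=0$ in \eqref{eq:coer LY} gives $V^\eta(x)\ge\|x\|_X$. Since $\eta<\mu$, we get $e^{\eta s}\|T_{LC}(s)x\|_X\le Me^{-(\mu-\eta)s}\|x\|_X$, so $V^\eta(x)\le M\|x\|_X$. The same decay shows the supremum is attained (the function is continuous in $s$ and tends to $0$), so the max is legitimate. Moreover, $V^\eta$ is a norm equivalent to $\|\cdot\|_X$, hence Lipschitz: $|V^\eta(x)-V^\eta(z)|\le M\|x-z\|_X$. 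Continuity of $t\mapsto V^\eta(T(t)x)$ then follows from strong continuity of $T(\cdot)$.

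\emph{Dissipation estimate.} This is the main step. First, exactly as in \eqref{eq:n-dini}, for the closed-loop semigroup we have
\[
V^\eta(T_{LC}(h)x)=\max_{s\ge0}e^{\eta s}\|T_{LC}(s+h)x\|_X\le e^{-\eta h}V^\eta(x).
\]
Then we compare the two semigroups via the formula above. By the Lipschitz bound,
\[
V^\eta(T(h)x)\le V^\eta(T_{LC}(h)x)+M\int_0^h \|T_{LC}(h-r)\|\,\|L\|\,\|CT(r)x\|_Y\,\mathrm{d}r .
\]
We subtract $V^\eta(x)$, divide by $h$, and take $\limsup_{h\to+0}$. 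The first part contributes at most $\lim_{h\to0}(e^{-\eta h}-1)V^\eta(x)/h=-\eta V^\eta(x)\le -\eta\|x\|_X$. For the integral term, $r\mapsto CT(r)x$ is continuous and $\|T_{LC}(h-r)\|\le M$, so this part contributes at most $M^2\|L\|\,\|Cx\|_Y$. Hence
\[
\dot V^\eta(x)\le -\eta\|x\|_X + M^2\|L\|\,\|Cx\|_Y ,
\]
which is \eqref{eoss LF2} with $\alpha=\eta$, $p=1$ and linear $\sigma(r)=M^2\|L\|r$. If $L=0$ or $C=0$, we replace $\sigma$ by $r\mapsto M^2(\|L\|+1)r$ to ensure $\sigma\in\mathcal K_\infty$.

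The main obstacle is justifying the perturbation formula for mild solutions when $x\notin D(A)$. We would handle this in one of two ways. One option is to prove it first for $x\in D(A)$ by differentiating $r\mapsto T_{LC}(t-r)T(r)x$, then extend by density, using that both sides are continuous in $x$. Alternatively, we can cite the standard bounded-perturbation result \cite[Chapter III]{engel2000one}. We also need some care with the limsup of the integral term: we split $\|CT(r)x\|_Y\le\|Cx\|_Y+\|C\|\,\|T(r)x-x\|_X$, and the second piece vanishes after averaging over $[0,h]$.
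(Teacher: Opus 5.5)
Your proposal is correct and follows essentially the same route as the paper: you treat $V^\eta$ as an equivalent norm, use the contraction $V^\eta(T_{LC}(h)x)\le e^{-\eta h}V^\eta(x)$ and the variation-of-constants formula for $A=(A+LC)-LC$, apply the triangle inequality for $V^\eta$, and take a short-time limit of the averaged integral term. The only difference is cosmetic. You bound that integral term with norms inside and obtain the gain $M^2\|L\|$, whereas the paper passes the average $\frac1h\int_0^h T_{LC}(h-r)LCT(r)x\,\mathrm{d}r\to LCx$ inside $V^\eta$ and gets $M\|L\|$; both gains are linear, so this does not affect the result.
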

\begin{proof}
As in the construction in \eqref{eq:normdef}, $V^\eta$ is an equivalent norm and in particular coercive, satisfying 
\begin{align}\label{eq:ii0}
\|x\|_X\le V^\eta(x)\le M \|x\|_X,\quad x\in X.
\end{align}
 for a suitable constant $M$.

 It follows from the definition of $V^\eta$ and the semigroup property that
\begin{align}\label{eq:ii1}
V^\eta(T_{LC}(t) x) \le e^{-\eta t} V^\eta(x), \quad  t \ge 0.
\end{align}

For the trajectory $x(\cdot)=\phi(\cdot,x)\in C([0,t],X)$ with initial state $x$ and $t>0$, the function
$y(\cdot):=Cx(\cdot)$ belongs to $C([0,t],Y)$. Then by continuity, it holds that
\begin{equation}\label{eq:ii2}
\lim_{\tau \to 0^+} \frac{1}{\tau} \int_0^\tau T_{LC}(\tau -s) Ly(s)\, \mathrm ds = Ly(0) = LC x. 
\end{equation}

As $V^\eta$ is a norm, we have
\begin{equation}\label{eq:ii3}
    V^\eta(x-y)\le V^\eta(x)+V^\eta(y),\quad x,y\in X.
\end{equation}

Rewrite  the system \eqref{eq:linear-system} as
\begin{align}\label{eq:re linear}
\dot{x}=(A+LC)x-LCx.
\end{align}
For $t\ge 0$, the mild solution is 
\begin{equation}\label{eq:variation}
\phi(t,x)=
T_{LC}(t)x
-\int_0^t T_{LC}(t-s)Ly(s)\, \mathrm ds.
\end{equation}

Using step-by-step \eqref{eq:variation}, \eqref{eq:ii3}, \eqref{eq:ii1}, \eqref{eq:ii2}, and \eqref{eq:ii0}, we obtain 
\begin{align*}
\dot{V}^\eta(x)
&= \mathop{\overline{\lim}}\limits_{\tau\rightarrow+0}\frac{1}{\tau} \left( V^\eta(\phi(\tau,x)) - V^\eta(x) \right)\\
&\stackrel{\eqref{eq:variation}}{=} \mathop{\overline{\lim}}\limits_{\tau\rightarrow+0}\frac{1}{\tau} \biggl( V^\eta\Bigl( T_{LC}(\tau) x - \int_0^\tau T_{LC} (\tau-s) Ly(s) \, \mathrm ds \Bigr) - V^\eta(x) \biggr) \\
& \stackrel{\eqref{eq:ii3}}{\le}\mathop{\overline{\lim}}\limits_{\tau\rightarrow+0}\frac{1}{\tau} \biggl( V^\eta(T_{LC}(\tau) x) + V^\eta\Bigl( \int_0^\tau T_{LC} (\tau-s)  L y(s)\, \mathrm ds \Bigl)- V^\eta(x) \biggr) \\
&\stackrel{\eqref{eq:ii1}}{\le}\mathop{\overline{\lim}}\limits_{\tau\rightarrow+0}\frac{1}{\tau}\biggl( \left( e^{-\eta \tau} - 1 \right) V^\eta(x)+ V^\eta\Bigl( \int_0^\tau T_{LC} (\tau-s)  L y(s)\, \mathrm ds \Bigr) \biggr) \\
&\stackrel{\eqref{eq:ii2}}{\le} -\eta V^\eta(x) + V^\eta
(L y(0))\\
&\stackrel{\eqref{eq:ii0}}{\le }-\eta \|x\|_X +M \|L\|_{\mathcal{L}(Y,X)}\|Cx\|_Y.
\end{align*}
Thus, if $L\neq0$, $V^\eta$ is a coercive eOSS Lyapunov function in dissipation form with linear gain given by $\sigma(r)=M\|L\|_{\mathcal{L}(Y,X)}r$, $r\geq 0$, and $p=1$. If $L=0$, then any linear $\sigma\in\mathcal{K}_\infty$ may be chosen, and $V^\eta$ is a coercive UES Lyapunov function.
\end{proof}

Following the classical detectability condition, see e.g. \cite[p. 317]{sontag2013mathematical}, later referred to as zero-detectability in \cite{krichman2001input}, we strengthen zero-detectability by requiring exponential convergence, and define exponential zero-detectability as follows.
\begin{definition} 
\label{def:exp-zero-detect}
    System \eqref{eq:linear-system} is called 
\emph{exponentially zero-detectable}, if there exist
$M,\omega>0$ such that for all $x\in X$ 
\[
y(\cdot,x)\equiv 0 \ \Rightarrow\ \|\phi(t,x)\|_X\le Me^{-\omega t}\|x\|_X,
\quad t\ge0.
\]
\end{definition}

By Proposition~\ref{prop:equi OSS eOSS LG}, OSS is equivalent to exponential OSS, which implies
\begin{corollary}
\label{cor:OSS implies exp zero}
    If system \eqref{eq:linear-system} is OSS, then it is exponentially zero-detectable.
\end{corollary}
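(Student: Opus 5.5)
By Proposition~\ref{prop:equi OSS eOSS LG}, OSS of system~\eqref{eq:linear-system} implies eOSS, and in fact eOSS with linear gain. So we may fix constants $M,\mu,k>0$ such that
\[
\|\phi(t,x)\|_X\le Me^{-\mu t}\|x\|_X+k\max_{s\in[0,t]}\|y(s,x)\|_Y
\]
holds for all $x\in X$ and all $t\ge0$. If one prefers to avoid the linear-gain refinement, the eOSS estimate \eqref{eOSS} with an arbitrary $\gamma\in\mathcal K$ works equally well. The only property we need is $\gamma(0)=0$, which holds for every function in $\mathcal K$.

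Next, take any $x\in X$ with $y(\cdot,x)\equiv0$. Then for every $t\ge0$ we have $\max_{s\in[0,t]}\|y(s,x)\|_Y=0$, so the output term in the estimate vanishes, since $\gamma(0)=0$. This leaves
\[
\|\phi(t,x)\|_X\le Me^{-\mu t}\|x\|_X,\qquad t\ge0 .
\]
This is exactly Definition~\ref{def:exp-zero-detect} with $\omega:=\mu$. The constants $M,\omega$ do not depend on $x$, because they come from the uniform eOSS estimate.

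I do not expect a real obstacle here. The only substantive ingredient is the passage from OSS with a general $\mathcal{KL}$ bound to an exponential bound, and Proposition~\ref{prop:equi OSS eOSS LG} already supplies it. Without that step, OSS would give only the nonuniform-rate estimate $\|\phi(t,x)\|_X\le\beta(\|x\|_X,t)$ on the zero-output set. One could also recover exponential decay directly by linearity: normalize $x$, find $t_1$ with $\beta(1,t_1)<1$, and iterate using invariance of the zero-output set under $T(\cdot)$. This is just the $\varepsilon$-free version of the cited proof.
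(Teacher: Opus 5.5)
Your proof is correct and follows the paper's route. You use Proposition~\ref{prop:equi OSS eOSS LG} to pass from OSS to eOSS; for identically zero output the gain term vanishes because $\gamma(0)=0$, which leaves the exponential bound with $\omega=\mu$. The direct iteration on the $T(\cdot)$-invariant zero-output subspace that you sketch is also valid, but it is not needed.
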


We are now ready to summarize the results of this paper.
\begin{theorem}
\label{thm:single direction}
   Consider the following statements:
     \begin{enumerate}[(i)]        
\item System~\eqref{eq:linear-system} is exponentially detectable. 

\item System~\eqref{eq:linear-system} admits a coercive eOSS Lyapunov function in dissipation form with linear Lyapunov gain.

\item System~\eqref{eq:linear-system} admits a coercive eOSS Lyapunov function in implication form with linear Lyapunov gain.

\item System \eqref{eq:linear-system} is eOSS with linear gain.

\item System \eqref{eq:linear-system} is eOSS.

\item System \eqref{eq:linear-system} is OSS with linear gain.

\item  System \eqref{eq:linear-system} is OSS.

\item {System \eqref{eq:linear-system} is UGESMO.}

\item System \eqref{eq:linear-system} is exponentially zero-detectable.
\end{enumerate}
Then 
\[
\text{(i) $\Rightarrow$ (ii) $\Rightarrow$ (iii) $\Leftrightarrow$ (iv)
$\Leftrightarrow$ (v) $\Leftrightarrow$ (vi) $\Leftrightarrow$ (vii)
$\Leftrightarrow$ (viii) $\Rightarrow$ (ix).}
\]

\end{theorem}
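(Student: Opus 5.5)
The proof will assemble the chain from results already established in the paper, each step reducing to a cited proposition or a short direct argument.

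\emph{(i) $\Rightarrow$ (ii).} This is exactly Proposition~\ref{prop:coer eOSS LF}. Take $L$ from Definition~\ref{def:exp detect} and pick any $\eta\in(0,\mu)$. Then $V^\eta$ is a coercive eOSS Lyapunov function in dissipation form with linear $\sigma$ and $p=1$.

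\emph{(ii) $\Rightarrow$ (iii).} In the proof of Proposition~\ref{prop:dissi-impli} the implication gain is $\chi(r)=(\tfrac{2}{\alpha}\sigma(r))^{1/p}$. With $\sigma(r)=cr$ this equals $(\tfrac{2c}{\alpha})^{1/p} r^{1/p}$, which is linear exactly when $p=1$ (Corollary~\ref{cor:di-im-lg}). For general $p$ the gain is a power function rather than a linear one. I would handle this either by restricting (ii) to $p=1$, as Proposition~\ref{prop:coer eOSS LF} provides, or by going around the issue: Corollary~\ref{cor:dissi-imp} gives an implication-form function, Theorem~\ref{thm:direct eOSS} then gives eOSS, and Theorem~\ref{thm:convserse-eOSS} produces an implication-form function with linear gain and $p=1$. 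The detour is the safer route.

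\emph{The equivalence block (iii)–(viii).}
\begin{itemize}
\item (iii) $\Rightarrow$ (v) follows from Theorem~\ref{thm:direct eOSS}, (i) $\Rightarrow$ (ii).
\item (iv), (v), (vi), (vii) are equivalent by Proposition~\ref{prop:equi OSS eOSS LG}.
\item (iv) $\Rightarrow$ (viii) is Proposition~\ref{prop:eOSS-ugesmo}.
\item (viii) $\Rightarrow$ (iii) is Theorem~\ref{thm:convserse-eOSS}, which yields linear gain $\chi(r)=\rho_2 r$.
\end{itemize}
Together these close the cycle (iii) $\Rightarrow$ (v) $\Rightarrow$ (iv) $\Rightarrow$ (viii) $\Rightarrow$ (iii), with (vi) and (vii) attached through Proposition~\ref{prop:equi OSS eOSS LG}.

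\emph{(vii) $\Rightarrow$ (ix).} This is Corollary~\ref{cor:OSS implies exp zero}, and it can also be checked directly. Use eOSS with linear gain; if $y(\cdot,x)\equiv0$, the gain term vanishes and the estimate $\|\phi(t,x)\|_X\le Me^{-\mu t}\|x\|_X$ remains.

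The only delicate point is the linear-gain bookkeeping in (ii) $\Rightarrow$ (iii) when $p\neq1$, which the converse theorem resolves as described. Every other step is a direct citation.
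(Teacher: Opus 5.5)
Your proposal is correct and follows the paper's proof essentially step for step. This includes the detour for (ii) $\Rightarrow$ (iii) through Corollary~\ref{cor:dissi-imp}, Theorem~\ref{thm:direct eOSS}, and Theorem~\ref{thm:convserse-eOSS}, which is exactly how the paper avoids the non-linear gain $(\tfrac{2}{\alpha}\sigma(r))^{1/p}$ when $p\neq 1$. Your explicit cycle (iii) $\Rightarrow$ (v) $\Rightarrow$ (iv) $\Rightarrow$ (viii) $\Rightarrow$ (iii) simply unpacks what the paper cites via Proposition~\ref{prop:equi OSS eOSS LG} and Theorem~\ref{thm:direct eOSS}.
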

\begin{proof}
   The implication (i) $\Rightarrow$ (ii) follows from Proposition~\ref{prop:coer eOSS LF}. For (ii) $\Rightarrow$ (iii), Corollary~\ref{cor:dissi-imp} yields a coercive eOSS Lyapunov function in implication form. Then Theorem~\ref{thm:direct eOSS}, Proposition \ref{prop:equi OSS eOSS LG}, and Theorem~\ref{thm:convserse-eOSS} give (iii). The equivalence (iii) $\Leftrightarrow$ (iv) $\Leftrightarrow$ (v) $\Leftrightarrow$ (vi) $\Leftrightarrow$ (vii) $\Leftrightarrow$ (viii) is shown in Proposition \ref{prop:equi OSS eOSS LG} and Theorem~\ref{thm:direct eOSS}. The implication (viii) $\Rightarrow$ (ix) follows from Corollary \ref{cor:OSS implies exp zero}.
\end{proof}

The implication structure is summarized in Fig.~\ref{fig:Result}.

\begin{figure}[htbp]    
\vspace{.3cm}    
\centering
\begin{tikzpicture}
\node (a) at (-6,0) {OSS};
\node[right=.6cm of a.east](b) {OSS with linear gain};
\node[right=.6cm of b.east](c) {eOSS};
\node[right=.6cm of c.east](d) {eOSS with linear gain};

\node[fit=(a) (d),draw=none,inner ysep=0mm](dummyBlockEquiv) {}; 
 
\node[above=0.1cm of dummyBlockEquiv,anchor=base] (titleEquiv) {\footnotesize Proposition \ref{prop:equi OSS eOSS LG}};        

\node[rectangle,fit=(a) (d) (titleEquiv),draw=black,inner ysep=.3mm](BlockEquiv) {};

\node[above = 0.9cm of BlockEquiv] (e2) {$\exists$ coercive eOSS Lyapunov function in implication form with linear Lyapunov gain};

\node[above = 0.7cm of e2](e1)
{$\exists$ coercive eOSS Lyapunov function in dissipation form with linear Lyapunov gain};

\node[above = 0.7cm of e1] (f) {exponential detectability};

 \node[below = 0.8cm of BlockEquiv] (h) {uniform global exponential stability modulo output};

 \node[below = 0.7cm of h] (g) {exponential zero-detectability};

 \path 
(a) edge[thick,double,double equal sign distance,{Implies[]}-{Implies[]}] (b) 

(b) edge[thick,double,double equal sign distance,{Implies[]}-{Implies[]}] (c)        
(c) edge[thick,double,double equal sign distance,{Implies[]}-{Implies[]}] (d);        


\path ([shift={(-0.05cm,-0.1cm)}]e2.south) edge[thick,double,double equal sign distance,{Implies[]}-{Implies[]}]node[right=.2cm]{\footnotesize Theorem~\ref{thm:direct eOSS}}([shift={(-0.05cm,0.08cm)}]BlockEquiv.north);

  \path([shift={(-0.05cm,-0.1cm)}]BlockEquiv.south) edge[thick,double,double equal sign distance,{Implies[]}-{Implies[]}]  node[right=.2cm]{\footnotesize  Theorem~\ref{thm:direct eOSS}} ([shift={(-0.05cm,0cm)}]h.north)

([xshift=-0.2cm]e1.south) edge[thick,double,double equal sign distance,-{Implies[]}] node[left=.2cm]{\footnotesize Theorem~\ref{thm:single direction}} ([xshift=-0.2cm] e2.north)

([xshift=0.2cm]e2.north) edge[thick,double,double equal sign distance,-{Implies[]},degil,draw=red] node[right=.2cm]{\footnotesize Proposition \ref{prop:no eOSS LF}} ([xshift=0.2cm]e1.south)

([xshift=-0.2cm]h.south) edge[thick,double,double equal sign distance,-{Implies[]}] node[left=.2cm]{\footnotesize  Clear} ([xshift=-0.2cm] g.north)

([xshift=0.2cm]g.north) edge[thick,double,double equal sign distance,-{Implies[]},degil,draw=red] node[right=.2cm]{\footnotesize Proposition \ref{prop: not eOSS}} ([xshift=0.2cm]h.south)

([xshift=-0.2cm]f.south) edge[thick,double,double equal sign distance,-{Implies[]}] node[left=.2cm]{\footnotesize Proposition \ref{prop:coer eOSS LF} } ([xshift=-0.2cm] e1.north)

([xshift=0.2cm]e1.north) edge[thick,double,double equal sign distance,-{Implies[]},degil,draw=red] node[right=.2cm]{\footnotesize Proposition \ref{prop:not-dete}} ([xshift=0.2cm]f.south);
\end{tikzpicture}
\caption{Summary of the statements of  Theorem~\ref{thm:single direction}. The crossed arrows indicate converse
implications which fail in general; the corresponding counterexamples are
given in Section~\ref{sec:counterexam}.} 
\label{fig:Result}
\end{figure}

\begin{remark}
In \cite{chen2025lyapunov}, it was shown that uniform global asymptotic stability modulo output implies the vanishing output vanishing state property (see \cite{chen2025lyapunov} for the definition). It is possible to introduce exponential versions of the concept and put it into the general picture of Figure~\ref{fig:Result}. But we do not follow this path here.
\end{remark}

\section{Counterexamples}\label{sec:counterexam}

Theorem~\ref{thm:single direction} gives a chain of implications from
exponential detectability to exponential zero-detectability. The purpose of this section is to show that, in general, the converse implications
cannot be expected. We present three counterexamples:
\begin{itemize}
   \item[(i)] exponential zero-detectability does not imply eOSS;
    \item[(ii)] eOSS does not imply the existence of an eOSS Lyapunov function in dissipation form;
    \item[(iii)] the existence of a coercive eOSS Lyapunov function in dissipation form with linear Lyapunov gain does not imply exponential detectability.
\end{itemize}

\subsection{Exponential zero-detectability does not imply  eOSS}

The first example separates exponential zero-detectability from eOSS.

Let $X=L^2(0,1)$ be the space of square integrable Lebesgue measurable complex-valued functions on $(0,1)$ and $Y=\mathbb C$. Consider the system
\begin{equation}\label{eq:example system}
\dot x(t)=Ax(t), \quad y(t)=Cx(t),
\end{equation}
where $A\in\mathcal L(X) $ and  $C\in\mathcal L(X,Y)$
 are defined by
\[
(Af)(s)=is f(s), \ \ s\in (0,1); \quad
Cf=\int_0^1 f(s)\, \mathrm ds.\]
It is well-known \cite[Example II.5.8(i)]{engel2000one} that the operator $A$ generates the  $C_0$-group  $T(\cdot)$ given by 
\begin{center}
$(T(t)f)(s)=e^{ist}f(s)$,\quad $s\in (0,1)$,\ $t\in\R$.     
\end{center}
It is easy to see that $T$ is isometric, i.e., $\|T(t)f\|_X = \|f\|_X$ for all $f\in X$ and all $t\geq 0$.

\begin{proposition}\label{prop: not eOSS}
 The following hold: 
\begin{enumerate}
    \item [(i)]  System \eqref{eq:example system} is exponentially zero-detectable.
    \item [(ii)]  System \eqref{eq:example system} is not eOSS.
\end{enumerate}
\end{proposition}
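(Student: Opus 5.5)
For (i), I will show that the only initial state with identically vanishing output is $f=0$; then the exponential zero-detectability estimate holds trivially, with any $M,\omega>0$. Suppose $y(t,f)=\int_0^1 e^{ist}f(s)\,\mathrm ds=0$ for all $t\ge0$. Extend $f$ by zero outside $(0,1)$. The function $F(z):=\int_0^1 e^{isz}f(s)\,\mathrm ds$ is entire, since $f\in L^1(0,1)$ and the domain of integration is compact. It vanishes on $[0,\infty)\subset\R$, so by the identity theorem $F\equiv0$. In particular the Fourier transform of $f\mathbf 1_{(0,1)}$ vanishes on all of $\R$, and injectivity of the Fourier transform on $L^1$ gives $f=0$ a.e. Hence $y(\cdot,f)\equiv0$ forces $\phi(t,f)=0$ for all $t$, which proves (i).

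For (ii), I argue by contradiction. Assume eOSS holds. By Proposition~\ref{prop:equi OSS eOSS LG} it then holds with linear gain, so there are $M,\mu,k>0$ with $\|T(t)f\|_X\le Me^{-\mu t}\|f\|_X+k\max_{s\in[0,t]}|CT(s)f|$. Since $T$ is isometric, fixing $t_1$ with $Me^{-\mu t_1}\le 1/2$ yields
\[
\tfrac12\|f\|_X\le k\max_{s\in[0,t_1]}\Bigl|\int_0^1 e^{i\sigma s}f(\sigma)\,\mathrm d\sigma\Bigr|\qquad\text{for all } f\in X.
\]
So it suffices to find unit vectors $f_n$ whose outputs are uniformly small on $[0,t_1]$. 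I take $f_n(\sigma)=\sqrt2\,\sin(2\pi n\sigma)$, or equivalently $e^{2\pi i n\sigma}$, which has norm $1$. Then $\int_0^1 e^{i\sigma s}e^{2\pi i n\sigma}\,\mathrm d\sigma=\frac{e^{is}-1}{i(s+2\pi n)}$, and its modulus is at most $2/(2\pi n)$ uniformly in $s\in[0,t_1]$. This tends to $0$ as $n\to\infty$, while the left-hand side of the inequality stays equal to $1/2$, a contradiction. Hence the system is not eOSS.

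The main obstacle is the uniqueness step in (i): the output is known to vanish only for $t\ge0$, not for all $t\in\R$. Analyticity of $F$ bridges this gap, via a Paley–Wiener-type observation. Part (ii) is a routine computation once the reduction to a finite horizon via isometry and linear gain is in place.
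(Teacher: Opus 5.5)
Your proof is correct and follows the paper's. Part (i) is the same entire-function, identity-theorem and Fourier-uniqueness argument, and part (ii) uses the same high-frequency unit vectors (the paper takes $x_n(s)=e^{isn}$), whose outputs are $O(1/n)$ while the isometry keeps $\|T(t)x_n\|_X=1$. The only difference is that the paper works directly with a general $\gamma\in\mathcal K$ and lets $n\to\infty$ and then $t\to\infty$, since $|CT(\tau)x_n|\le 2/n$ holds for all $\tau\ge0$; your detour through Proposition~\ref{prop:equi OSS eOSS LG} and a finite horizon $t_1$ is harmless but unnecessary. That finite horizon is, however, what makes your sine alternative legitimate, because $\sqrt2\sin(2\pi n\sigma)$ has output of modulus $\sqrt2/2$ at time $2\pi n$, so it is not equivalent to the exponential choice on all of $[0,\infty)$.
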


\begin{proof} 
(i). We first show that the system \eqref{eq:example system} is exponentially zero-detectable.
Assume that $x\in X$ satisfies
\[
CT(t)x = \int_0^1 e^{ist}x(s)\, \mathrm ds = 0
\quad \forall t\ge 0.
\]

Define
\[
F(z):=\int_0^1 e^{isz}x(s)\, \mathrm ds,\quad z \in\C.
\]

Let $x\in L^2(0,1)$ and extend it by zero to 
$\tilde x\in L^1(\mathbb R)\cap L^2(\mathbb R)$.
Then
\begin{align*} 
F(z)=
\int_0^1 e^{isz}x(s)\, \mathrm ds
    =
\int_{\mathbb R}\tilde x(s)e^{isz}\, \mathrm ds .
 \end{align*}
By 
\cite[Section~19.1(b)]{Rudin1987}, $F$ is an entire holomorphic function.

Since 
$F(t)=0$ for all $t\ge0$,
the identity theorem 
\cite[Theorem~10.18]{Rudin1987} implies $F\equiv0$. Hence, the Fourier transform of 
$\tilde x$
 vanishes identically, and by the uniqueness theorem
\cite[Theorem~9.12]{Rudin1987} we obtain $\tilde x=0$ a.e.,
so  $x=0$ in $L^2(0,1)$.
 Therefore, the implication
\[
CT(t)x\equiv 0 \ \Rightarrow\ \|T(t)x\|_X\le Me^{-\omega t}\|x\|_X,
\quad t\ge0,
\]
holds trivially for any $M,\omega>0$, and the system is exponentially zero-detectable.

(ii). For 
$n\in\mathbb N$ define $x_n(s):=e^{isn}$, $s\in (0,1)$. Then $\|x_n\|_X=1$
 and
\begin{align*}
CT(t)x
_n=\int_0^1 e^{ist}e^{isn}\, \mathrm ds
=\frac{e^
{i(n+t)}-1}{i(n+t)},\quad t\ge0.
\end{align*}
Thus, for any $t\ge0$ and for all $\tau\in[0,t]$, we get
\begin{align*}
    |CT(\tau)x_n|=\Big|\frac{e^
{i(n+\tau)}-1}{i(n+\tau)}\Big|\le \frac{2}{|n+\tau|}\le \frac{2}{n},
\end{align*}
so that 
\[\max_{0\le \tau\le t} |CT(\tau)x_n|\le \frac{2}{n} \rightarrow 0,\quad\text{as}\quad n\rightarrow \infty. \]

On the other hand, the isometry of $T(\cdot)$ yields for all $t\geq 0$ and  all $n\in\mathbb N$ that
\[\|T(t)x_n\|_X=\|x_n\|_X=1.\]

Assume that system \eqref{eq:example system} is eOSS. Then there exist $M,\mu>0$  and $\gamma\in \mathcal{K}$ such that for all $t\ge0$  and all \(n\in\mathbb N\),
\begin{align*}
1=\|T(t)x_n\|_X&\le M e^{-\mu t}+\gamma\left(\max_{0\le \tau\le t} |CT(\tau)x_n|\right)\le  Me^{-\mu t}+\gamma\left(\frac{2}{n}\right).
\end{align*}

 Since \(\gamma\in\mathcal K\), we have \(\gamma(2/n)\to 0\) as \(n\to\infty\).
Hence, for each fixed \(t\geq 0\), letting \(n\to\infty\) gives
\begin{align*} 1\leq Me^{-\mu t}. \end{align*}
 Now, letting \(t\to\infty\) yields a contradiction.  Thus, system \eqref{eq:example system} is not eOSS. 
\end{proof}

\subsection{Exponential OSS does not imply  the existence of  an eOSS Lyapunov function in dissipation form}

The next example shows that, in this generality, the converse eOSS Lyapunov theorem fails in dissipation form.
 This shows a fundamental difference between the dissipation form and the implication form in OSS Lyapunov theory.

Let $X=C_{2\pi}(\mathbb R,\C)$ be the space of all $2\pi$-periodic, continuous, complex valued functions on $\R$,  equipped with the supremum norm
$\|x\|_X:=\max_{s\in\mathbb R}|x(s)|.$ 
Let $Y=\mathbb C$.
Consider the system
\begin{equation}\label{eq:eoss-not-imply-lyap}
\dot x(t)=Ax(t), \quad y(t)=Cx(t),
\end{equation}
where \(A\) generates the \emph{left translation group}  $T(\cdot)$ on $X$ given for any $x \in X$ by  
\[
(T(t)x)(s)=x(s+t),
\quad s\in\mathbb R, \ t\in\R.
\]

The bounded output operator $C:X\to\mathbb C$ is defined by 
\[
Cx=x(0),\quad x\in X.
\]

Then $X$ is a Banach space and $T(\cdot)$ is a group of isometries on $X$; see  \cite[p.~34-35]{engel2000one}.

\begin{proposition}
\label{prop:no eOSS LF}
 The following hold:
\begin{itemize}
       \item[(i)] System \eqref{eq:eoss-not-imply-lyap} admits a coercive eOSS Lyapunov function in implication form with linear Lyapunov gain and $p=1$.       
    \item[(ii)] System \eqref{eq:eoss-not-imply-lyap} is eOSS with  linear gain.
    \item[(iii)] For any given $\rho\in \mathcal{P}$, $\sigma \in\mathcal{K}_\infty$, system \eqref{eq:eoss-not-imply-lyap} does not admit a  function $V: X\to \R_+ $, which is continuous along the trajectories (i.e., Definition~\ref{def:eOSS LF-dissipative}(ii) holds) and  satisfies the dissipation inequality
    \begin{align}\label{eq:general V}
         \dot{V}(x)\le -\rho(\|x\|_X)+\sigma(\|Cx\|_Y),\quad x\in X.
    \end{align}        
    \item[(iv)] There is no coercive eOSS Lyapunov function in dissipation form for system \eqref{eq:eoss-not-imply-lyap}.
\end{itemize}
\end{proposition}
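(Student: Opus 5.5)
The plan is to prove (ii) directly from the isometry and periodicity of the translation group. Then (i) follows from Theorem~\ref{thm:convserse-eOSS}. The core of the argument is (iii), which uses a periodicity-plus-integration contradiction, and (iv) is a special case of (iii).

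\emph{Step 1: (ii).} For $x\in X$ the output is $y(s,x)=(T(s)x)(0)=x(s)$, and $\|T(t)x\|_X=\|x\|_X$ for all $t$.
\begin{itemize}
\item If $t\ge 2\pi$, the interval $[0,t]$ covers a full period, so
\[
\|T(t)x\|_X=\|x\|_X=\max_{s\in[0,2\pi]}|x(s)|\le \max_{s\in[0,t]}|y(s,x)|.
\]
\item If $t\in[0,2\pi)$, then $\|T(t)x\|_X=\|x\|_X\le e^{2\pi}e^{-t}\|x\|_X$.
\end{itemize}
Combining the two cases gives \eqref{eOSS} with $M=e^{2\pi}$, $\mu=1$ and $\gamma(r)=r$, so the system is eOSS with linear gain.

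\emph{Step 2: (i).} By Proposition~\ref{prop:eOSS-ugesmo}, the system is UGESMO. Theorem~\ref{thm:convserse-eOSS} then yields a $1$-absolutely homogeneous coercive eOSS Lyapunov function in implication form with linear Lyapunov gain and $p=1$.

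\emph{Step 3: (iii), the main step.} Suppose $V$ is continuous along trajectories and satisfies \eqref{eq:general V}.
\begin{itemize}
\item Fix $x$ and set $v(t):=V(T(t)x)$. By the semigroup property, $D^+v(t)=\dot V(T(t)x)$. Since $\|T(t)x\|_X=\|x\|_X$ and $CT(t)x=x(t)$, we get
\[
D^+v(t)\le g(t):=-\rho(\|x\|_X)+\sigma(|x(t)|),
\]
where $g$ is continuous.
\item Apply the standard comparison lemma: a continuous function $h$ with $D^+h\le 0$ everywhere (values $-\infty$ allowed) is nonincreasing. Using it for $h(t)=v(t)-\int_0^t g(s)\,\mathrm ds$ gives
\[
v(2\pi)-v(0)\le \int_0^{2\pi} g(s)\,\mathrm ds.
\]
This lemma is the only delicate point, and I would either cite it or prove it by the usual $\varepsilon$-perturbation argument.
\item Periodicity gives $T(2\pi)x=x$, so the left side equals $0$.
\item Now choose $x$ to be a continuous $2\pi$-periodic bump with $0\le x\le 1$, $\|x\|_X=1$, supported (modulo $2\pi$) on an interval of length $\varepsilon$. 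Then
\[
\int_0^{2\pi} g(s)\,\mathrm ds\le -2\pi\rho(1)+\varepsilon\sigma(1),
\]
which is negative once $\varepsilon<2\pi\rho(1)/\sigma(1)$, since $\rho(1)>0$ for $\rho\in\mathcal P$.
\item This gives $0<0$, a contradiction.
\end{itemize}

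\emph{Step 4: (iv).} A coercive eOSS Lyapunov function in dissipation form satisfies \eqref{eq:general V} with $\rho(r)=\alpha r^p\in\mathcal P$, and it is continuous along trajectories by Definition~\ref{def:eOSS LF-dissipative}(ii). Hence (iii) excludes its existence.
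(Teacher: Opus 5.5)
Your proof is correct. Parts (iii) and (iv) match the paper's argument essentially step for step: the same periodic function $V(T(t)x_\varepsilon)$, the same narrow bump, and the same Dini-derivative comparison over one period, which the paper takes from \cite{mironchenko2023}.

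The difference lies in (i) and (ii), where you reverse the paper's order. The paper proves (i) first, with an explicit Lyapunov function: the weighted sup-norm $V(x)=\max_{s\in[-2\pi,0]}e^{\lambda s}|x(s)|$, whose implication-form decay it checks directly with $\chi(r)=3e^{2\pi\lambda}r$. It then deduces (ii) from (i) via the direct Lyapunov theorem (Theorem~\ref{thm:direct eOSS}, which rests on the external result of \cite{chen2025lyapunov}) and Proposition~\ref{prop:equi OSS eOSS LG}.

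You instead verify (ii) by hand. Because $y(s,x)=x(s)$, the output over one full period recovers $\|x\|_X$, which gives the explicit constants $M=e^{2\pi}$, $\mu=1$, $\gamma(r)=r$. You then obtain (i) from the abstract converse result, Theorem~\ref{thm:convserse-eOSS}. Your detour through Proposition~\ref{prop:eOSS-ugesmo} is harmless but unnecessary, since that theorem already covers eOSS systems directly.

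What your route buys: it is shorter, it shows transparently why the system is eOSS, and it does not depend on the direct Lyapunov theorem at all.

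What the paper's route buys: it produces a concrete, globally Lipschitz, norm-type Lyapunov function. That sharpens the contrast with (iii), since even an equivalent norm works in implication form while nothing works in dissipation form. Your version of (i) only yields the implicit function from Theorem~\ref{thm:convserse-eOSS}, built from entrance times and the cutoff $\theta$.
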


\begin{proof}
(i). Let $\lambda>0$. Define
\begin{align*} V(x):= \max_{s\in[-2\pi,0]}e^{ \lambda s} |x(s)|,\quad x\in X.  \end{align*}
Note that $\max_{s\in[-2\pi,0]}|x(s)|=\|x\|_X$. Then 
\begin{align}\label{eq:piV}
e^{-2\pi\lambda}\|x\|_X\le V(x)\le \|x\|_X,\quad x\in X.
\end{align}
Thus $V$ is an equivalent norm on $X$, i.e., it is coercive with $p=1$. Moreover, since $V$ is a norm,  for all $x_1, x_2\in X$,
\begin{align*}
    |V(x_1)-V(x_2)|\le V(x_1-x_2) \le\|x_1-x_2\|_X.
\end{align*}
Therefore, $V$ is globally Lipschitz continuous.

Let $\chi(r):=3e^{2\pi\lambda }r$, $r\ge0$, which is a linear $\mathcal{K}_\infty$-function.  To verify the implication estimate \eqref{eq:aboss Implication}, fix an arbitrary $x\in X$ such that
\[
\|x\|_X\ge \chi(\|Cx\|_Y)=\chi(|x(0)|) = 3e^{2\pi\lambda} |x(0)|.
\]
Then 
\[
|x(0)|\le \frac13 e^{-2\pi\lambda}\|x\|_X\stackrel{\eqref{eq:piV}}{\le} \frac13V(x). 
\]
If $x=0$, \eqref{eq:aboss Implication} holds. Let $x\neq0.$ By the continuity of $x$, for all sufficiently small $\tau>0$,
\begin{align}\label{eq:small}
\max_{s\in[0,\tau]} e^{\lambda s} |x(s)|\le \frac12 V(x).
\end{align}

Fix $\tau \in(0,2\pi)$ such that \eqref{eq:small} holds. Note that $[-2\pi+\tau,0]\subset[-2\pi,0].$ By the definition of $T(\cdot)$, \eqref{eq:piV} and \eqref{eq:small}, we obtain the estimate
\begin{align*}
    V(T(\tau)x)= \max_{s\in [-2\pi,0]} e^{\lambda s}|(T(\tau) x)(&s)|
   = \max_{s\in [-2\pi,0]} e^{\lambda s} |x(s+\tau)|\\
   &= e^{-\lambda \tau} \max_{s\in[-2\pi+\tau,\tau]}e^{\lambda s}|x(s)|\\
    &= e^{-\lambda \tau} \max\Big\{\max_{s\in[-2\pi+\tau,0]}e^{\lambda s}|x(s)|, \max_{s\in[0,\tau]}e^{\lambda s}|x(s)|\Big\}\\
    &\le e^{-\lambda \tau} \max\Big\{V(x), \frac12V(x)\Big\}\\
    &= e^{-\lambda \tau} V(x).
\end{align*}

It follows that
\begin{align*} \dot{V}(x)\le-\lambda V(x)\stackrel{\eqref{eq:piV}}{\le} -\lambda e^{-2\pi \lambda} \|x\|_X.  \end{align*}

Therefore $V$ is a coercive eOSS Lyapunov function in implication form and satisfies the decay implication \eqref{eq:aboss Implication} with linear Lyapunov gain $\chi(r):=3e^{2\pi\lambda }r$, $r\ge0$, $p=1$, and $\tilde{\alpha} = \lambda e^{-2\pi \lambda}$. 

Statement (ii) follows from (i), in combination with Theorem~\ref{thm:direct eOSS} and Proposition~\ref{prop:equi OSS eOSS LG}.

(iii). Now we want to show that there is no function that is continuous along the trajectories and satisfies the dissipation inequality \eqref{eq:general V}.
Fix $ \rho\in\mathcal{P},  \sigma\in\mathcal K_\infty$ and assume, for contradiction, that there exists a function
$V$  for system \eqref{eq:eoss-not-imply-lyap} satisfying \eqref{eq:general V}.

Choose $\varepsilon\in(0,2\pi)$, and $x_\varepsilon\in X$  satisfying $x_\varepsilon(s)=0$ for $s\in[\varepsilon,2\pi]$, and
\[
0\le x_\varepsilon(s)\le 1 \quad \forall s\in\mathbb R,\ \|x_\varepsilon\|_X=1.
\]
Since $\sigma$ is strictly increasing and $0\le x_\varepsilon\le 1$, it follows that
\begin{align*}
0\le \sigma(|x_\varepsilon(s)|)\le \sigma(1)
\quad \forall s\in[0,2\pi]
\end{align*}
and so
\begin{align*}
\int_0^{2\pi} \sigma(|x_\varepsilon(s)|)\, \mathrm ds
= \int_0^{\varepsilon} \sigma(|x_\varepsilon(s)|)\, \mathrm ds \leq \sigma(1)\varepsilon.
\end{align*}
Choose $\varepsilon\in(0,2\pi)$ sufficiently small so that
$\sigma(1)\varepsilon<2\pi\rho(1).$  Define
\begin{align*}
f(t):=V(T(t)x_\varepsilon), \quad t\ge0.
\end{align*}
By assumption, $f$ is continuous. Moreover, since $T(2\pi)=\operatorname{Id}$, where the $\operatorname{Id}$ denotes the identity operator on $X$,  $f$ is $2\pi$-periodic 
\begin{align*}
f(t+2\pi)=V(T(t+2\pi)x_\varepsilon)=V(T(t)x_\varepsilon)=f(t), \quad t\geq 0.
\end{align*}

Next, for every $t\ge0$, applying \eqref{eq:general V} to the state $T(t)x_\varepsilon$, we obtain
\begin{align*}
D^+f(t)
=D^+ V(T(t)x_\varepsilon)&\le -\rho(\|T(t)x_\varepsilon\|_X)+\sigma(|CT(t)x_\varepsilon|)\\
&= -\rho(1)+\sigma(|x_\varepsilon(t)|) \, := g(t), 
\quad t\geq 0.
\end{align*}

 Then $g\in C([0,2\pi])$ and
$D^+f(t)\le g(t)$ for all $t\in[0,2\pi)$. By \cite[Proposition A.31]{mironchenko2023}, it follows that $f(2\pi)-f(0)\le \int_0^{2\pi} g(s)\, \mathrm ds$.
Consequently,
\begin{align*}
f(2\pi)-f(0)
&\le -2\pi\rho(1) +\int_0^{2\pi }\sigma(|x_\varepsilon(s)|)\, \mathrm ds\le -2\pi\rho (1) + \varepsilon \sigma(1)
<0,
\end{align*}
which contradicts $f(2\pi)=f(0)$. 

Item (iv) follows immediately from (iii).
\end{proof}

\subsection{The existence of an eOSS Lyapunov function does not imply exponential detectability}

The final example separates the existence of eOSS Lyapunov functions from exponential detectability.

Let  $X=\ell^2$ be the space of all complex-valued sequences
\(x=(x_n)_{n\in\mathbb N}\) such that
$
    \sum_{n=1}^{\infty} |x_n|^2 < \infty,
$
equipped with the norm
$
 \|x\|_{\ell^2}
    := (\sum_{n=1}^{\infty} |x_n|^2)^{1/2}
$ and let $Y=\ell^\infty$ be the space of all bounded complex-valued sequences with the norm $\|y\|_{\ell^\infty}: = \sup_{n\in\mathbb N} |y_n|.$

Choose a countable dense subset $(z_k)_{k\in\mathbb N}$ of the unit
sphere of $\ell^2$. Since
\begin{align*}
    \|x\|_{\ell^2}
    =
    \sup_{\|z\|_{\ell^2}=1}
    |\langle x,z\rangle_{\ell^2}|,
\end{align*}
and (for fixed $x\in \ell^2$) the map $z\mapsto \langle x,z\rangle_{\ell^2}$ is continuous, we have
\begin{align*}
    \|x\|_{\ell^2}
    =
    \sup_{k\in\mathbb N}
    |\langle x,z_k\rangle_{\ell^2}|,
    \quad x\in\ell^2 .
\end{align*}
Define
\begin{align*}
    C:\ell^2\to\ell^\infty,
    \quad
    Cx:=\bigl(\langle x,z_k\rangle_{\ell^2}\bigr)_{k\in\mathbb N}.
\end{align*}
Then $C$ is linear, and
\begin{align*}
    \|Cx\|_{\ell^\infty}
    =
    \sup_{k\in\mathbb N}
    |\langle x,z_k\rangle_{\ell^2}|
    =
    \|x\|_{\ell^2},
    \quad x\in\ell^2.
\end{align*}
Thus, $C$ is a linear isometry from $\ell^2$ into $\ell^\infty$. Equivalently, $C:\ell^2\to\operatorname{Im}C$ is an isometric isomorphism.

Consider the system
\begin{align}\label{sys:l_2}
    \dot x(t)=0,\quad y(t)=Cx(t).
\end{align}
Then $A=0$ and $T(t)=\operatorname{Id}$, $t\geq 0$.
\begin{proposition}\label{prop:not-dete}
The following hold:
\begin{enumerate}
    \item [(i)] System \eqref{sys:l_2} admits a coercive eOSS Lyapunov function in dissipation form with linear Lyapunov gain and $p=1$.
    \item [(ii)] System \eqref{sys:l_2} is not exponentially detectable.
   \item [(iii)] System \eqref{sys:l_2} is exactly observable on every finite time interval. More precisely, for every $\tau>0$, there exists $C_\tau>0$ such that 
   \begin{equation*}
      \int_0 ^\tau \|CT(t)x\|_Y^2 \,\mathrm{d}t\ge C_\tau\|x\|_X^2,\quad x\in X.
   \end{equation*}
\end{enumerate} 
\end{proposition}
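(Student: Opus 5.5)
For (i) we use $V(x):=\|x\|_{\ell^2}$. It is coercive with $k_1=k_2=p=1$. Since $T(t)=\operatorname{Id}$, the map $t\mapsto V(\phi(t,x))$ is constant, hence continuous, and $\dot V(x)=0$. Because $C$ is an isometry, $\|Cx\|_{\ell^\infty}=\|x\|_{\ell^2}$, so
\[
\dot V(x)=0\le -\|x\|_{\ell^2}+\|Cx\|_{\ell^\infty}.
\]
This is the dissipation inequality \eqref{eoss LF2} with $\alpha=1$ and linear $\sigma(r)=r$.

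For (iii), again by the isometry,
\[
\int_0^\tau\|CT(t)x\|_Y^2\,\mathrm dt=\tau\|x\|_X^2,
\]
so $C_\tau=\tau$ works.

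The main work is (ii). Suppose $L\in\mathcal L(\ell^\infty,\ell^2)$ makes the bounded operator $LC\in\mathcal L(\ell^2)$ generate a UES semigroup $e^{tLC}$. We aim for a contradiction from the geometry of $\ell^\infty$. For a bounded generator, $e^{tLC}$ is UES exactly when the spectral bound of $LC$ is negative. In particular $0\notin\sigma(LC)$, so $LC$ is invertible on $\ell^2$. Then $P:=C(LC)^{-1}L\in\mathcal L(\ell^\infty)$ satisfies
\[
P^2=C(LC)^{-1}(LC)(LC)^{-1}L=P,
\]
and its range is $\operatorname{Im}C$, since $PCx=Cx$. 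Hence $\operatorname{Im}C$ is a complemented subspace of $\ell^\infty$ isomorphic to $\ell^2$.

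This contradicts the classical fact that every infinite-dimensional complemented subspace of $\ell^\infty$ is isomorphic to $\ell^\infty$ (Lindenstrauss), whereas $\ell^2$ is separable and $\ell^\infty$ is not. An alternative route, if one wants to avoid the Lindenstrauss theorem, is to use that $\ell^\infty$ has the Grothendieck and Dunford--Pettis properties. Then every bounded operator from $\ell^\infty$ into the reflexive space $\ell^2$ is weakly compact and therefore completely continuous. Consequently $LC=L\circ C$ is compact on $\ell^2$, since it maps bounded sets, which are weakly relatively compact, to relatively compact sets. A compact operator on the infinite-dimensional space $\ell^2$ has $0$ in its spectrum, which again contradicts the invertibility of $LC$. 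I would present the compactness argument, citing the Dunford--Pettis property of $\ell^\infty$ and weak compactness of operators into reflexive spaces.

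The main obstacle is citing this Banach-space fact correctly. Everything else is routine.
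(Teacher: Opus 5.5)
Your proposal is correct. Parts (i) and (iii) coincide with the paper. Your choice $\alpha=1$, $\sigma(r)=r$ is fine, since $\|Cx\|_{\ell^\infty}=\|x\|_{\ell^2}$ gives equality; the paper's $k>\alpha$ makes no difference. Your first argument for (ii) is exactly the paper's proof: invertibility of $LC$ from the spectral bound, the idempotent $P=C(LC)^{-1}L$ with range $\operatorname{Im}C$, and Lindenstrauss's theorem on complemented subspaces of $\ell^\infty$.

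The compactness argument you say you would actually present is a genuinely different, and valid, route. Every operator into the reflexive space $\ell^2$ is weakly compact, so the Dunford--Pettis property of $\ell^\infty$ alone makes $L$ completely continuous; the Grothendieck property is not needed. Now take a bounded sequence in $\ell^2$. It has a weakly convergent subsequence, $C$ preserves weak convergence, and $L$ turns the image into a norm convergent sequence. Hence $LC$ is compact and cannot be invertible on the infinite-dimensional space $\ell^2$.

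What this buys you:
\begin{itemize}
\item It replaces Lindenstrauss's deep theorem that $\ell^\infty$ is prime by a standard property of $C(K)$-spaces. It is essentially the usual argument that a space with the Dunford--Pettis property has no complemented infinite-dimensional reflexive subspace.
\item It needs no projection.
\item It uses neither that $C$ is an isometry nor that $\operatorname{Im}C$ is closed and separable. It shows that \emph{every} output injection $LC$ through $\ell^\infty$ is compact on $\ell^2$, for any bounded $C$.
\end{itemize}

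The paper's route, by contrast, isolates the structural obstruction, namely non-complementability of $\operatorname{Im}C$. That is precisely what the subsequent remark reverses when $Y$ is a Hilbert space: there $\operatorname{Im}C$ is closed and complemented, and $L=-\lambda C^{-1}P_{C(X)}$ stabilizes.
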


\begin{proof}
(i). We are going to show that the system admits a coercive eOSS
Lyapunov function. Define
\[
V(x):=\|x\|_{\ell^2}, \quad x\in X.
\]
Then $V$ is coercive with $p=1$ and continuous. Since
$V(T(\tau)x)=V(x)$ for all $\tau\geq 0$, we have $\dot V(x)=0$.
Moreover,
\[
\|Cx\|_{\ell^\infty}=\|x\|_{\ell^2}.
\]
Thus, for any $\alpha>0$ and any $k>\alpha$,
\[
\dot V(x)=0
\leq -\alpha\|x\|_{\ell^2}+k\|Cx\|_{\ell^\infty}.
\]
Hence \eqref{sys:l_2} admits a coercive eOSS Lyapunov function in
dissipation form with linear Lyapunov gain.

(ii). Suppose, for contradiction, that \eqref{sys:l_2} is exponentially
detectable. Note that $A=0$ and
$C\in\mathcal L(\ell^2,\ell^\infty)$. Then there exists a bounded
operator $L\in\mathcal L(\ell^\infty,\ell^2)$ such that
$LC\in\mathcal L(\ell^2)$ generates an exponentially stable
$C_0$-semigroup on $\ell^2$. This implies that there exist constants
$M,\omega>0$ such that
\begin{align*}
\|e^{(LC)t}x\|_{\ell^2}
\leq Me^{-\omega t}\|x\|_{\ell^2},
\quad x\in\ell^2,\ t\geq0.
\end{align*}

Since $LC$ is a bounded operator and the semigroup generated by $LC$ is
exponentially stable, the spectrum of $LC$ is contained in
\begin{align*}
\{\lambda\in\mathbb C \mid \operatorname{Re}\lambda<0\}.
\end{align*}
Hence $0\in\rho(LC)$, where $\rho(LC)$ denotes the resolvent set of
$LC$. Thus $LC\in\mathcal L(\ell^2)$ is invertible with bounded inverse.

Define
\begin{align*}
P:=C(LC)^{-1}L:\ell^\infty\to\ell^\infty.
\end{align*}
Then $\operatorname{Im}P\subset\operatorname{Im}C$. Moreover, for all
$x\in\ell^2$,
\begin{align*}
P(Cx)
=C(LC)^{-1}LCx
=Cx.
\end{align*}
Hence $P$ is the identity on $\operatorname{Im}C$. Therefore,
$\operatorname{Im}C\subset\operatorname{Im}P$, and consequently
\begin{align*}
\operatorname{Im}P=\operatorname{Im}C.
\end{align*}

Furthermore, for every $y\in\ell^\infty$, we have
$Py\in\operatorname{Im}C$. Since $P$ is the identity on
$\operatorname{Im}C$, it follows that
\begin{equation*}
P^2y=P(Py)=Py.
\end{equation*}
Thus $P^2=P$, and $P$ is a bounded projection from $\ell^\infty$ onto
$\operatorname{Im}C$. This implies that $\operatorname{Im}C$ is
complemented in $\ell^\infty$.

Since $C:\ell^2\to\operatorname{Im}C$ is an isometric isomorphism,
$\operatorname{Im}C$ is infinite-dimensional and separable. However,
by \cite{lindenstrauss1967complemente}, every infinite-dimensional
complemented subspace of $\ell^\infty$ is isomorphic to $\ell^\infty$.
This is impossible, since $\ell^\infty$ is not separable, whereas
$\operatorname{Im}C$ is separable. Therefore \eqref{sys:l_2} is not
exponentially detectable.

(iii). Since $T(t)=\operatorname{Id}$ and $C$ is an isometry, for every $x\in \ell^2$ and $\tau>0,$ 
\begin{equation*}
    \int_0^\tau \|CT(t)x\|_{\ell^\infty}^2\,\mathrm{d}t =\int_0^\tau \|Cx\|_{\ell^\infty}^2\,\mathrm{d}t=\tau \|x\|_{\ell^2}^2,
\end{equation*}
and therefore the system is exactly observable on every finite time interval $[0,\tau].$
\end{proof}

\begin{remark}
The implication from the existence of a coercive eOSS Lyapunov function in dissipation form with linear Lyapunov gain to exponential detectability fails even for the Hilbert
state space $X=\ell^2$ and the bounded generator $A=0$. However, the example
\eqref{sys:l_2} still relies on the fact that the output space $Y$ is not Hilbert.

For the trivial dynamics $A=0$, this type
of construction cannot arise as soon as the output space $Y$ is Hilbert, with $X$ merely assumed to be a Banach space. To see this, suppose that the system admits a coercive eOSS Lyapunov function in dissipation form
with arbitrary $\alpha, p>0$ and arbitrary $\sigma\in\mathcal K_\infty$, that is, 
\begin{align*}
0=\dot{V}(x)\leq-\alpha\|x\|_X^p+\sigma(\|Cx\|_{Y}).
\end{align*}
Taking $\|x\|_X=1$, we obtain
\[
    \alpha \le \sigma(\|Cx\|_Y).
\]
Since $\sigma\in\mathcal K_\infty$, it follows that
\[
    \|Cx\|_Y\ge \sigma^{-1}(\alpha)>0,
    \quad \|x\|_X=1 .
\]
For $x\neq0$, it yields
\[
    \|Cx\|_Y=\|x\|_X\bigr\|C \frac{x}{\|x\|_X}\bigr\|_Y\ge \sigma^{-1}(\alpha)\|x\|_X,
\]
which implies $ \|Cx\|_Y\ge \sigma^{-1}(\alpha)\|x\|_X $ for all $x\in X$.

Thus $C(X)$ is closed in $Y$  and
$C^{-1}:C(X)\to X$ is bounded. Since $Y$ is Hilbert, there is
an orthogonal projection $P_{C(X)}:Y\to C(X)$. Hence, for any $\lambda>0$,
the operator
\begin{align*} 
    L=-\lambda C^{-1}P_{C(X)}
 \end{align*}
is bounded from $Y$ to $X$ and satisfies $LC=-\lambda \operatorname{Id}$. Therefore
$A+LC=-\lambda \operatorname{Id}$ generates an exponentially stable semigroup.
\end{remark}

\section{Observers and spectral decompositions}\label{sec: decomposition}

Section~\ref{sec:counterexam} shows that exponential zero-detectability does not imply eOSS or exponential detectability in general. In this section, we further investigate the relations between these properties. In Section~\ref{sec:Exponential detectability and robust Luenberger observers}, we provide an observer-based interpretation of the implication from exponential detectability to eOSS. This is based on the standard Luenberger observer construction for exponentially detectable infinite-dimensional systems; see \cite{Curtain2020}. 

In Section~\ref{sec:Spectrum decomposition at 0 and exponential detectability}, we study conditions under which exponential zero-detectability implies exponential detectability.
To this end, we consider systems for which the spectral subspace associated with the spectrum in the closed right half-plane is finite-dimensional and the remaining part is exponentially stable. A related result is given in \cite[Theorem~8.1.7]{Curtain2020}, where exponential detectability is characterized by a spectrum decomposition at zero with an exponentially stable part and a finite-dimensional observable subsystem, under the additional assumption that the output operator has finite rank. Here, no finite-rank assumption is imposed on the original output operator $C$.

Under the spectral decomposition assumption, exponential zero-detectability and exponential detectability are equivalent. Together with the implications established in Theorem~\ref{thm:single direction}, this gives equivalent characterizations of eOSS in terms of exponential detectability, the corresponding Lyapunov conditions, and exponential zero-detectability.

\subsection{Exponential detectability and robust Luenberger observers}
\label{sec:Exponential detectability and robust Luenberger observers}

Proposition~\ref{prop:coer eOSS LF} derives eOSS from exponential detectability by constructing
an eOSS Lyapunov function. We present the same implication from the perspective of Luenberger observers. In this subsection, we show that exponential detectability allows us to construct an exponentially robust Luenberger observer, whose existence implies eOSS. Exponential detectability is also used to construct a Luenberger observer, which further implies eOSS.

Following \cite[Definition 8.3.1]{Curtain2020} we consider, for a bounded output injection operator $L\in\mathcal L(Y,X)$, the observer driven by the perturbed output $y(\cdot)+d(\cdot)$
\begin{align}\label{sys:robust}
\dot{\hat{x}}(t)
&=
A\hat{x}(t)+L\bigl(C\hat{x}(t)-(y(t)+d(t))\bigr)
\end{align}
with initial condition $\hat{x}(0)=\hat{x}_0 \in X$. Here
$d\in L^\infty_{\, \mathrm{loc}}([0,\infty),Y)$ denotes an unknown output disturbance.
For $t>0$, we use the notation
\[
\|d\|_{L^\infty([0,t),Y)}
:=
\operatorname*{ess\,sup}_{s\in(0,t)}\|d(s)\|_Y
\] 
and for $t=0$ we set this quantity equal to zero.

Let $x(0)=x_0$.
Given $x_0,\hat{x}_{0}\in X$, let $x(t)=\phi(t,x_0)$ denote the mild solution of system \eqref{eq:linear-system} with $x_0$, and $\hat{x}(t)$ denote the mild solution of the system \eqref{sys:robust} with $\hat{x}_{0}$. Define  the observation  error by
\begin{align*} e(t):=\hat{x}(t)-x(t), \quad t\geq 0. \end{align*}

Following the robust observer viewpoint of \cite[Definition 20]{sontag1997output}, we define
\begin{definition}
\label{def:luenberger-observer}
 The system \eqref{eq:linear-system}
 is said to  admit an \emph{exponentially robust Luenberger observer} if there exist $L\in\mathcal{L}(Y,X)$, $M,\mu>0$ and $\gamma\in \mathcal K$ such that the mild solutions of \eqref{sys:robust} satisfy for all $x_0,\hat{x}_{0}\in X$, all $t\ge0$ and all $d\in L^\infty_{\, \mathrm{loc}}([0,\infty),Y)$
\begin{align}\label{eq:robust}
   \|e(t)\|_X\le M e^{-\mu t}\|\hat{x}_{0}-x_0\|_X
    + \gamma (\|d\|_{L^\infty([0,t),Y)}).
\end{align}
 
\end{definition}

\begin{proposition}
Consider the following statements:
\begin{enumerate}
    \item[(i)] System~\eqref{eq:linear-system} is exponentially detectable.
    \item[(ii)] System \eqref{eq:linear-system} admits an exponentially robust Luenberger observer.
    \item[(iii)] System \eqref{eq:linear-system} is eOSS.
\end{enumerate}
Then
\[
\text{(i) $\Rightarrow$  (ii) $\Rightarrow$ (iii)}.
\]
\end{proposition}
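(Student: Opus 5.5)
For (i) $\Rightarrow$ (ii) we take $L\in\mathcal L(Y,X)$ from Definition~\ref{def:exp detect}. Then $A+LC$ generates $T_{LC}(\cdot)$ with $\|T_{LC}(t)\|\le Me^{-\mu t}$.

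First we derive the error dynamics. The observer \eqref{sys:robust} can be written as
\[
\dot{\hat x}=(A+LC)\hat x-Ly-Ld .
\]
The plant can be written as $\dot x=(A+LC)x-LCx=(A+LC)x-Ly$, exactly as in \eqref{eq:re linear}. Bounded perturbation theory identifies the mild solutions of $A$ and of $A+LC$ with these forcing terms. Subtracting the two variation-of-constants formulas (cf.\ \eqref{eq:variation}), the terms $Ly$ cancel, and we obtain
\[
e(t)=T_{LC}(t)(\hat x_0-x_0)-\int_0^t T_{LC}(t-s)Ld(s)\,\mathrm ds .
\]
The only technical point is that mild solutions of \eqref{sys:robust} with an $L^\infty_{\mathrm{loc}}$ forcing are well defined. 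This holds because $Ld$ is strongly measurable and locally bounded, so the Bochner integral makes sense.

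Next we estimate the error:
\[
\|e(t)\|_X\le Me^{-\mu t}\|\hat x_0-x_0\|_X+\frac{M\|L\|}{\mu}\|d\|_{L^\infty([0,t),Y)} .
\]
This is \eqref{eq:robust} with the linear gain $\gamma(r)=\tfrac{M\|L\|}{\mu}r$. If $L=0$, any linear $\gamma\in\mathcal K$ works.

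For (ii) $\Rightarrow$ (iii) we apply the robust observer to the plant with initial state $x_0=x$. We choose the observer initial state $\hat x_0=0$ and the disturbance $d=-y(\cdot,x)$, which is continuous and hence lies in $L^\infty_{\mathrm{loc}}$. The observer then has zero input, since $y+d\equiv 0$. Its equation becomes $\dot{\hat x}=(A+LC)\hat x$ with $\hat x(0)=0$, so $\hat x\equiv 0$ by uniqueness of mild solutions. Consequently $e(t)=-\phi(t,x)$, and \eqref{eq:robust} gives
\[
\|\phi(t,x)\|_X\le Me^{-\mu t}\|x\|_X+\gamma\bigl(\|y(\cdot,x)\|_{L^\infty([0,t),Y)}\bigr).
\]
Continuity of $y$ gives $\|y(\cdot,x)\|_{L^\infty([0,t),Y)}\le\max_{s\in[0,t]}\|y(s,x)\|_Y$. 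Since $\gamma$ is increasing, this yields \eqref{eOSS}, i.e.\ the system is eOSS.

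The main, though mild, obstacle is the rigorous justification of the error formula in the first step. We must check that the mild solution of $A$ coincides with that of $A+LC$ under the forcing $-LCx(\cdot)$. This follows from the standard perturbation result for bounded operators in \cite{engel2000one} and \cite{Curtain2020}, which was already used implicitly in Proposition~\ref{prop:coer eOSS LF}. Everything else is a direct estimate.
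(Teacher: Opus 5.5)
Your proposal is correct and follows essentially the same route as the paper. For (i) $\Rightarrow$ (ii) you derive the same error formula $e(t)=T_{LC}(t)(\hat x_0-x_0)-\int_0^t T_{LC}(t-s)Ld(s)\,\mathrm ds$ and the same linear gain $\frac{M\|L\|}{\mu}r$, with your subtraction of the two variation-of-constants formulas spelling out the ``standard manipulation'' the paper leaves implicit. For (ii) $\Rightarrow$ (iii) you use the identical choice $\hat x_0=0$, $d=-y(\cdot,x)$, which gives $\hat x\equiv 0$ and $e=-\phi(\cdot,x)$.
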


\begin{proof}
(i) $\Rightarrow$ (ii). This implication is based on the standard Luenberger observer construction; see \cite[Lemma 8.3.2]{Curtain2020} for the infinite-dimensional setting.  Since the system is exponentially detectable, there exists an output injection operator $L\in\mathcal L(Y,X)$ such that  $A+LC$ generates an exponentially stable semigroup $ T_{LC}(\cdot)$. I.e., there exist constants 
$M\ge 0$ and $\mu>0$ such that
\[
   \|T_{LC}(t)\| \le M e^{-\mu t}, \quad t\ge 0.
\]

For this choice of $L$, consider \eqref{sys:robust}. Since $y(t)=Cx(t)$, a standard manipulation of the integral representation of mild solutions shows that the error satisfies
    \begin{align*}
    e(t)
    =T_{LC}(t)e(0)-
    \int_0^t T_{LC}(t-s)Ld(s)\, \mathrm ds.
\end{align*}
    
Using the exponential stability of $T_{LC}(\cdot)$, we get
\begin{align}\label{eq:robust-LG}
    \|e(t)\|_X
    \leq\;&
    M e^{-\mu t}\|e(0)\|_X+
    M\|L\|_{\mathcal L(Y,X)}
    \int_0^t e^{-\mu(t-s)}
    \|d(s)\|_Y\, \mathrm ds  \notag\\
    \leq\;&
    M e^{-\mu t}\|\hat{x}_{0}-x_0\|_X
    +
    \frac{M\|L\|_{\mathcal L(Y,X)}}{\mu}\|d\|_{L^\infty([0,t),Y)}.
\end{align}  
Therefore, if $L\neq0$, \eqref{eq:robust} holds with $\gamma(r)=\frac{M\|L\|_{\mathcal L(Y,X)}}{\mu}r$. If $L=0$, then \eqref{eq:robust} holds with any $\gamma\in\mathcal{K}.$  \eqref{sys:robust} is an exponentially robust Luenberger observer for \eqref{eq:linear-system}.

(ii) $\Rightarrow$ (iii). Appealing to Definition~\ref{def:luenberger-observer}, fix an observer and a gain $\gamma$ as in \eqref{eq:robust}. Choose $\hat{x}_0=0$ and let $x_0\in X$ be arbitrary. Define $y(t) := C T(t)x_0$, $t\geq 0$, and
\begin{align*} 
    d(t)=-y(t), \quad t \geq 0.
 \end{align*}
Since $y(\cdot)$ is continuous, this choice is admissible as an element of
$L^\infty_{\, \mathrm{loc}}([0,\infty),Y)$, and
\[
    \|d\|_{L^\infty([0,t),Y)}
    =
   \max\limits_{s\in[0,t]}\{\|y(s,x_0)\|_Y\}, \quad t>0.
\]
As $y(t)+d(t) = 0$,  \eqref{sys:robust} reduces to
\[
    \dot{\hat{x}}(t)
    =
    (A+LC)\hat{x}(t),
    \quad
    \hat{x}(0)=0,
\]
with unique mild solution 
  $
    \hat{x}(t)\equiv 0.
$
Thus
\[
    e(t)=\hat{x}(t)-x(t)=-x(t), \quad t\geq 0.
\]
By assumption,  \eqref{eq:robust} is satisfied and we obtain for $t>0$
\[
    \|x(t)\|_X
    \le
    M e^{-\mu t}\|x_0\|_X
    +
  \gamma( \max\limits_{s\in[0,t]}\|y(s,x_0)\|_Y).
\]
The case $t=0$ is immediate. Hence the system is eOSS.
\end{proof}

\subsection{Spectrum decomposition at \texorpdfstring{$0$}{0} and exponential detectability}
\label{sec:Spectrum decomposition at 0 and exponential detectability}

The spectral decomposition separates the finite-dimensional spectral subspace associated with the spectrum in the closed right half-plane from an exponentially stable complement, see \cite[Definition 8.1.5, Theorem 8.1.6]{Curtain2020}. This structure allows the detectability analysis to be reduced to a finite-dimensional subsystem.

We first recall that a subspace $V\subset X$ is called
\emph{$T(\cdot)$-invariant} if
\[
T(t)V\subset V \quad \forall t\ge0.
\]
Here and below, $X=X_u\oplus X_s$ means that $X$ is an internal direct sum of  $X_u$ and $X_s$, which are closed subspaces of $X$, namely: $X_u\cap X_s=\{0\}$, and every $x\in X$ admits a unique decomposition
$x=x_u+x_s$ with $x_u\in X_u$ and $x_s\in X_s$.

\begin{assumption}\label{ass:decomp}
There are closed, $T(\cdot)$-invariant subspaces $X_u,X_s\subset X$ so that
\[
X = X_u \oplus X_s, \quad \dim X_u < \infty,
\]
where $X_u$ is the spectral subspace of $A$ associated with the spectrum in the closed right half-plane. Moreover, there exist constants $M_s,\omega_s>0$ such that
\begin{equation}\label{eq:Ts-stable}
\|T(t)x_s\|_X\le M_s e^{-\omega_s t}\|x_s\|_X
\quad \forall x_s\in X_s,\ \forall t\ge0.
\end{equation}
\end{assumption}

Such decompositions, which separate finitely many unstable modes from an exponentially stable infinite-dimensional part, are classical in PDE control;
see \cite{russell1978controllability}.
For  sufficient conditions ensuring this assumption, we refer to \cite[Section 2.4]{Curtain2020}.

In what follows, we write
\[
T_u(t):=T(t)|_{X_u},\quad T_s(t):=T(t)|_{X_s},
\]
and denote their generators by $A_u$ and $A_s$, respectively. The corresponding
output operators are denoted by
\[
C_u:=C|_{X_u}, \quad C_s:=C|_{X_s}.
\]

\begin{theorem}\label{thm:assum-eoss}
Let Assumption~\ref{ass:decomp} hold.
 The following statements are equivalent:
\begin{enumerate}
    \item[(i)] System \eqref{eq:linear-system} is exponentially detectable.
    \item[(ii)] System \eqref{eq:linear-system} is eOSS.
     \item[(iii)] System \eqref{eq:linear-system} is exponentially zero-detectable.
    \item[(iv)] The pair $(A_u,C_u)$ is exponentially zero-detectable.
 \item[(v)] The pair $(A_u,C_u)$ is exponentially detectable.
  \item[(vi)] The pair $(A_u,C_u)$ is observable.
\end{enumerate} 
In particular, all the statements (i)-(ix) in Theorem~\ref{thm:single direction} are equivalent.
\end{theorem}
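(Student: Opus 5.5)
Under Assumption~\ref{ass:decomp} I would prove the cycle (i) $\Rightarrow$ (ii) $\Rightarrow$ (iii) $\Rightarrow$ (iv) $\Rightarrow$ (vi) $\Rightarrow$ (v) $\Rightarrow$ (i). The implications (i) $\Rightarrow$ (ii) $\Rightarrow$ (iii) are already contained in Theorem~\ref{thm:single direction}. The final sentence then follows because (i) is the top and (ix) the bottom of that chain.

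For (iii) $\Rightarrow$ (iv), take $x_u\in X_u$ with $C_uT_u(t)x_u=0$ for all $t\ge0$. By invariance, $CT(t)x_u=C_uT_u(t)x_u\equiv0$, so the exponential zero-detectability of the full system gives $\|T_u(t)x_u\|\le Me^{-\omega t}\|x_u\|$.

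For (iv) $\Rightarrow$ (vi), I would work in the finite-dimensional space $X_u$, where $A_u$ is a matrix and $\sigma(A_u)$ lies in the closed right half-plane. Let $N$ be the unobservable subspace of $(A_u,C_u)$. It is $A_u$-invariant, and every trajectory starting in $N$ has zero output. If $N\neq\{0\}$, then $A_u|_N$ has an eigenvalue $\lambda$ with $\operatorname{Re}\lambda\ge0$ and a (complexified) eigenvector $v$. Applying (iv) to the real and imaginary parts of $v$ gives exponential decay of $e^{\lambda t}v$, which is a contradiction. Hence $N=\{0\}$, and $(A_u,C_u)$ is observable.

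For (vi) $\Rightarrow$ (v), observability in finite dimensions yields the rank condition on $\operatorname{Im}C_u^*$. The main technical issue is that $Y$ may be infinite-dimensional. I would reduce to finitely many output functionals: since $\ker C_u\cap$ (observability kernel) is trivial in finite dimensions, there exist finitely many $y_1^*,\dots,y_m^*\in Y^*$ such that the map $\tilde C_u:=(y_j^*\circ C_u)_j: X_u\to\C^m$ already gives an observable pair $(A_u,\tilde C_u)$. To find them, note that the intersection over all $y^*$ of the kernels of the observability matrices is $\{0\}$, and finite dimension lets me pick finitely many. Pole placement then gives $\tilde L_u$ with $A_u+\tilde L_u\tilde C_u$ Hurwitz. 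Setting $L_uy:=\tilde L_u(y_j^*(y))_j\in\mathcal L(Y,X_u)$ gives $A_u+L_uC_u$ stable.

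For (v) $\Rightarrow$ (i), the main obstacle is that the injection $L_u$ couples the subsystems, since $C$ also acts on $X_s$. I would define $L:=\iota_uL_u\in\mathcal L(Y,X)$. With respect to $X=X_u\oplus X_s$, the operator $A+LC$ is block upper triangular:
\[
\begin{pmatrix} A_u+L_uC_u & L_uC_s\\ 0 & A_s\end{pmatrix},
\]
because $LC$ maps into $X_u$. The $X_s$-component therefore evolves as $T_s(t)x_s$, which is exponentially stable. The $X_u$-component satisfies the variation-of-constants formula $x_u(t)=e^{(A_u+L_uC_u)t}x_u(0)+\int_0^te^{(A_u+L_uC_u)(t-s)}L_uC_sT_s(s)x_s\,ds$. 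The convolution of two exponentially decaying kernels decays exponentially, so the closed-loop semigroup is UES. Here boundedness of the projections onto $X_u$ and $X_s$ (closed complemented subspaces) makes all the norms equivalent. One could alternatively avoid the finite-rank reduction in the previous step by taking $L_u:=K\circ Q$ for a bounded $Q:Y\to\C^m$ built as above; this is the same construction.
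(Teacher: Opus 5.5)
Your proof is correct and follows essentially the paper's route: you use the chain from Theorem~\ref{thm:single direction}, the invariance argument for (iii) $\Rightarrow$ (iv), and a block upper-triangular closed loop built from an injection $L$ with range in $X_u$. The only differences are that you prove the finite-dimensional steps by hand (the unobservable-subspace argument and pole placement) where the paper cites Sontag and Curtain--Zwart, and that your reduction to finitely many output functionals plays the role of the paper's bounded projection $Q_u$ onto $C(X_u)$.
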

\begin{proof}
(i) $\Rightarrow$ (ii) $\Rightarrow$ (iii). This follows from Theorem~\ref{thm:single direction}.

(iii) $\Rightarrow$ (iv). For any $x_u\in X_u$, the $T(\cdot)$-invariance of $X_u$ implies
\[
T(t)x_u=T_u(t)x_u\in X_u,\quad t\geq0.
\]

Hence, if
$
C_uT_u(t)x_u=0$ for all $ t\geq0$, then
$
CT(t)x_u=0 $ for all $t\geq0.$
Since system~\eqref{eq:linear-system} is exponentially
zero-detectable, there exist $M,\omega>0$ such that
\[
\|T_u(t)x_u\|_X
=
\|T(t)x_u\|_X
\leq
Me^{-\omega t}\|x_u\|_X,
\qquad t\geq0.
\]
Therefore, $(A_u,C_u)$ is exponentially zero-detectable.

(iv) $\Leftrightarrow$ (v) $\Leftrightarrow$ (vi). By \cite[p. 317]{sontag2013mathematical}, exponential zero-detectability and exponential
detectability of the finite-dimensional pair $(A_u,C_u)$ are equivalent. Since
the spectrum of $A_u$ is contained in the closed right half-plane, no nonzero trajectory can have an exponentially decaying trajectory. Therefore, if $(A_u,C_u)$ is exponentially detectable, any state producing identically zero output must be zero. It is well-known that this is exactly observability.

(vi) $\Rightarrow$ (i).  To apply \cite[Theorem~8.1.7]{Curtain2020} to the finite-dimensional unstable part,
let
$Y_u:=C(X_u).$
Since $X_u$ is finite-dimensional, $Y_u$ is a finite-dimensional
closed complemented subspace of $Y$. Hence, there exists a bounded projection
$Q_u\in\mathcal L(Y,Y_u)$
such that
$Q_uy=y$ for all $y\in Y_u$.

Regard $C_u$ as an operator from $X_u$ to $Y_u$. Applying
\cite[Theorem~8.1.7]{Curtain2020} to $(A_u,C_u)$, we obtain an operator
$L_u\in\mathcal L(Y_u,X_u)$ such that $A_u+L_uC_u$ is exponentially stable.
Define
\[
Ly:=L_uQ_uy\in X_u\subset X, \quad y\in Y.
\]
With respect to the decomposition $X=X_u\oplus X_s$, we have
\[
A+LC=
\begin{pmatrix}
A_u+L_uC_u & L_uQ_uC_s\\
0 & A_s
\end{pmatrix}.
\]
Since both diagonal blocks generate exponentially stable semigroups, the upper triangular coupling term is bounded, and hence  $A+LC$ generates an exponentially stable $C_0$-semigroup on $X$.
 Therefore, system~\eqref{eq:linear-system}
is exponentially detectable.
\end{proof}

\section{Exponential output-to-state stability of a parabolic system}\label{sec: example}

In this section, the abstract results developed earlier are applied to a concrete parabolic PDE system. An explicit coercive UES Lyapunov function is first constructed to verify UES under suitable conditions. Next, the finite-dimensional detectability result from Theorem~\ref{thm:assum-eoss} is used to show that the system is eOSS with linear gain for all parameter values. Since Assumption~\ref{ass:decomp} is satisfied, the parabolic system admits an eOSS Lyapunov function and is exponentially detectable.

Let $a>0$, and $c, q\in \mathbb{R}$. In this section $L^2(0,1)$ and the Sobolev spaces are real spaces.
Consider the linear parabolic equation with output
\begin{subequations}\label{expde}
\begin{align}\label{sys:sub exam}
x_{t}(z,t) &=ax_{zz}(z,t)+c x(z,t),  \\
\label{eq:y1}
y(t)  &=\int_0^1 x(z,t)\, \mathrm dz,
\end{align}
\end{subequations}
where  $x_t(z,t)$ and $ x_{zz}(z,t)$ denote, respectively,  the first partial derivative of $x(z,t)$ with respect to $t$ and the second partial derivative of $x(z,t)$ with respect to $ z$.
The equation is defined for $z\in(0,1), t>0$, and is endowed  with  boundary conditions
(namely, a Robin condition on the left end and a Dirichlet condition at the right end)
\begin{align}\label{boundary}
x_z(0,t)=-qx(0,t),\quad 
x(1,t)=0,
\end{align}
and an initial condition
\begin{equation*}    
x(z,0)=x_0(z).
\end{equation*}
Define
\begin{center}
$A =a\frac{\, \mathrm d^2}{\, \mathrm dz^2}+c$ on $X=L^2(0,1)$     
\end{center}
with domain
\begin{equation}    
D(A) = \{ x \in H^2(0,1) \mid  x_z(0) = - q x(0),\ x(1)=0 \},
\end{equation}
where $H^2(0,1)$ is the standard Sobolev space of square integrable functions with (weak) first and second derivatives that are again square integrable.

According to \cite[Exercise VI.4.7]{engel2000one}, $A$ generates an analytic semigroup.

System \eqref{expde} with boundary conditions \eqref{boundary} is unstable if $c$ or $q$ are positive and sufficiently large, see \cite[p. 13]{smyshlyaev2010adaptive}. We are going to  show that in some cases, the system is UES  and a  Lyapunov function  is easy to find.
\begin{proposition}\label{prop:UGASLF}
Let  $X=L^2(0,1)$ and $Y=\mathbb{R}$.
Assume that $c<\frac{a\pi^2}{4}$ and $q<\min\{1, \frac{a\pi^2-4c}{a(\pi^2+4)}\}$. Then
\begin{equation}\label{eq:LF-example}  
V(x):=\frac{1}{2}\int_0^1x^2(z)\, \mathrm dz=\frac{1}{2}\|x\|_X^2  
\end{equation}
is a coercive UES Lyapunov function for \eqref{expde} with \eqref{boundary}, and thus \eqref{expde} with \eqref{boundary} is UES. Consequently, \eqref{expde} with \eqref{boundary} is eOSS with linear gain.
\end{proposition}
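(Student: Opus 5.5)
The plan is to work with the energy functional $V(x)=\frac12\|x\|_X^2$, which is trivially coercive with $p=2$, $k_1=k_2=\frac12$, and Lipschitz on bounded sets, so $t\mapsto V(T(t)x)$ is continuous. I would first compute $\dot V$ for $x\in D(A)$. Since $A$ generates an analytic semigroup, $T(t)x\in D(A)$ for $t>0$ and $t\mapsto T(t)x$ is differentiable there, so $\frac{d}{dt}V(T(t)x)=\langle AT(t)x,T(t)x\rangle$. The resulting decay estimate then extends to all $x\in X$ by density and continuity.

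\textbf{The key computation.} For $x\in D(A)$, integration by parts with the boundary conditions $x(1)=0$ and $x_z(0)=-qx(0)$ gives
\begin{align*}
\langle Ax,x\rangle = a\bigl[x_zx\bigr]_0^1 - a\|x_z\|^2 + c\|x\|^2 = aq\,x(0)^2 - a\|x_z\|^2 + c\|x\|^2.
\end{align*}

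\textbf{Handling the boundary term and the reaction term.} I would use two inequalities valid under $x(1)=0$:
\begin{itemize}
\item the Wirtinger-type inequality $\|x\|^2\le \frac{4}{\pi^2}\|x_z\|^2$, which holds because the first eigenvalue of $-\partial_{zz}$ with Neumann condition at $0$ and Dirichlet condition at $1$ is $\pi^2/4$;
\item the trace bound $x(0)^2=\bigl(\int_0^1 x_z\bigr)^2\le \|x_z\|^2$.
\end{itemize}
If $q\le 0$, the boundary term is nonpositive, and we get $\langle Ax,x\rangle\le -(a\pi^2/4-c)\|x\|^2<0$ since $c<a\pi^2/4$. If $0<q<1$, I would estimate
\begin{align*}
\langle Ax,x\rangle \le -a(1-q)\|x_z\|^2 + c\|x\|^2 .
\end{align*}
This is the step I expect to be the main obstacle. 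A crude route uses $\|x_z\|^2\ge \frac{\pi^2}{4}\|x\|^2$, which gives the coefficient $-(a(1-q)\pi^2/4-c)$. That quantity is negative exactly when $q<\frac{a\pi^2-4c}{a\pi^2}$, which is stronger than needed.

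The stated threshold $\frac{a\pi^2-4c}{a(\pi^2+4)}$ suggests a different split of $-a\|x_z\|^2$. I would write $-a\|x_z\|^2 = -a\|x_z\|^2 + aq\|x_z\|^2 - aq\|x_z\|^2$, and then use the bound $\|x\|^2\le\|x_z\|^2$ (also from $x(1)=0$) on part of the terms. The target is the inequality
\begin{align*}
\langle Ax,x\rangle \le -\Bigl(\frac{a\pi^2}{4}-c-aq\frac{\pi^2+4}{4}\Bigr)\|x\|^2 .
\end{align*}
The paper's condition is exactly what makes the bracket positive. If this bookkeeping fails, the crude bound above still yields the result in a subrange of the stated one.

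\textbf{Conclusion.} With a constant $\kappa>0$ such that $\langle Ax,x\rangle\le-\kappa\|x\|^2$, we obtain $\dot V(x)\le -\kappa\|x\|_X^2$ on $D(A)$. The integrated form
\begin{align*}
V(T(t)x)\le V(x)-\kappa\int_0^t\|T(s)x\|^2\,ds
\end{align*}
then extends to all $x\in X$ by density, so $\dot V(x)\le-\kappa\|x\|_X^2$ everywhere. This makes $V$ a coercive UES Lyapunov function, and Gronwall's inequality gives UES. Finally, UES is eOSS with $\gamma=0$, and any linear $\gamma$ can be added, so the system is eOSS with linear gain.
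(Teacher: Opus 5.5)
Your proof is correct and follows the paper's route: the energy functional, integration by parts giving $aq\,x(0)^2-a\|x_z\|^2+c\|x\|^2$, the Wirtinger-type inequality with constant $\pi^2/4$, the case split $q\le 0$ versus $0<q<1$, and extension from $D(A)$ by density. The one real difference is how you bound the boundary term, and there your assessment of your own estimate is backwards.

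The paper bounds $x(0)^2\le\max_z|x(z)|^2\le 2\|x\|\|x_z\|\le\|x\|^2+\|x_z\|^2$ using Agmon's inequality. The extra $aq\|x\|^2$ this produces leads to the coefficient $-\frac14\bigl(a\pi^2(1-q)-4(aq+c)\bigr)$, and hence to the threshold $\frac{a\pi^2-4c}{a(\pi^2+4)}$.

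Your bound $x(0)^2=\bigl(\int_0^1 x_z\bigr)^2\le\|x_z\|^2$ is sharper. The condition it yields, $q<\frac{a\pi^2-4c}{a\pi^2}$, is \emph{weaker}, not stronger, than the paper's. Since $a\pi^2-4c>0$ and $a\pi^2<a(\pi^2+4)$, the hypothesis of the proposition already implies it. Equivalently, for $q>0$ your coefficient $-\bigl(\frac{a\pi^2}{4}-c-\frac{aq\pi^2}{4}\bigr)$ is smaller than your ``target'' $-\bigl(\frac{a\pi^2}{4}-c-aq\frac{\pi^2+4}{4}\bigr)$ by exactly $aq$.

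So the step you call the main obstacle is not one. Drop the vague splitting argument: your ``crude'' estimate already proves the proposition on the whole stated range. It in fact covers the larger range $q<\min\{1,1-\frac{4c}{a\pi^2}\}$. For $c=0$ this is the sharp threshold $q<1$, where the eigenvalue $0$ with eigenfunction $1-z$ appears.
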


\begin{proof}
Consider the Lyapunov function candidate from \eqref{eq:LF-example},
which is obviously coercive.
Assume that $x\in D(A)$.  Using integration by parts and the boundary conditions \eqref{boundary} yields
\begin{align}\label{deri}
\dot{V}(x)= \int_0^1x(z)x_t(z)\, \mathrm dz\notag
&=\int_0^1x(z)(ax_{zz}(z)+c x(z))\, \mathrm dz\notag\\
&=-ax(0)x_z(0)-\int_0^1ax_z^2(z)\, \mathrm dz+\int_0^1cx^2(z)\, \mathrm dz\notag\\
&=aqx^2(0)-a\int_0^1x_z^2(z)\, \mathrm dz+c\int_0^1x^2(z)\, \mathrm dz.
\end{align}

By a variation of Wirtinger's inequality \cite[Remark B.1]{smyshlyaev2010adaptive} and $x(1)=0$,
we have
\begin{equation*}
 -\int_0^1x_z^2(z)\, \mathrm dz\leq-\frac{\pi^2}{4}\int_0^1x^2(z)\, \mathrm dz.
\end{equation*}
For $q\leq0$, we have
\begin{align*}
 \dot{V}(x)\leq -a\int_0^1x_z^2(z)\, \mathrm dz+c\int_0^1x^2(z)\, \mathrm dz
 \leq&-\frac{a\pi^2}{4}\int_0^1x^2(z)\, \mathrm dz+c\int_0^1x^2(z)\, \mathrm dz\\
\leq & -\left(\frac{a\pi^2}{4}-c\right)\int_0^1x^2(z)\, \mathrm dz.
\end{align*}

For $0<q<1$, 
applying Agmon's inequality \cite[Lemma B.2]{smyshlyaev2010adaptive} and using $x(1)=0$, we have
\begin{align*}
\max_{z \in [0,1]} |x(z)|^2 &\leq
2\left(\int_0^1 x^2(z)\, \mathrm dz\right)^{\frac{1}{2}}\left(\int_0^1 x_z^2(z)\, \mathrm dz\right)^{\frac{1}{2}}\leq \int_0^1 x^2(z)\, \mathrm dz+\int_0^1 x_z^2(z)\, \mathrm dz,
\end{align*}
and \eqref{deri} becomes
\begin{align*}
\dot{V}(x)\leq & \, aq \int_0^1 x^2(z)\, \mathrm dz+aq\int_0^1 x_z^2(z)\, \mathrm dz
-a\int_0^1x_z^2(z)\, \mathrm dz+c\int_0^1x^2(z)\, \mathrm dz\notag\\
=& \, (aq+c) \int_0^1 x^2(z)\, \mathrm dz
-a(1-q)\int_0^1 x_z^2(z)\, \mathrm dz\notag\\
\leq&\, -\frac{a\pi^2(1-q)}{4}\int_0^1 x^2(z)\, \mathrm dz+(aq+c)\int_0^1 x^2(z)\, \mathrm dz\\
\leq& \,-\left(\frac{a\pi^2(1-q)-4(aq+c)}{4}\right)\int_0^1 x^2(z)\, \mathrm dz.
\end{align*}

Since $D(A)$ is invariant under the semigroup,
if $c<\frac{a\pi^2}{4}$ and
$q<\min\{1, \frac{a\pi^2-4c}{a(\pi^2+4)}\}$,
then the previous estimates show that
there is $\alpha>0$ such that
\[
D^+V(\phi(t,x))\le -\alpha V(\phi(t,x)),\quad x\in D(A).
\]
By \cite[Proposition A.35]{mironchenko2023}, we get
\[
V(\phi(t,x))\le e^{-\alpha t}V(x),
\quad x\in D(A).
\]

As $D(A)$ is dense in $X$,
the estimate extends to all $x\in X$ by the boundedness of $T(\cdot)$
for each fixed $t\ge0$ and the continuity of $V$ on $X$:
\begin{align}\label{eq:e-alpha}
V(T(t)x)\le e^{-\alpha t}V(x),\quad x\in X,\ t\ge0.
\end{align}

Since $V(x)=\frac12\|x\|_X^2$, we obtain
\[
    \|T(t)x\|_X\le e^{-\frac{\alpha}{2}t}\|x\|_X,
    \quad x\in X,\ t\ge0.
\]
Moreover, the  estimate  \eqref{eq:e-alpha} implies 
\begin{align*} \dot{V}(x)\le -\alpha V(x)= -\frac{\alpha}{2}\|x\|^2_X,\quad x\in X. \end{align*}
Thus $V$ is a coercive UES Lyapunov function on $X$. In particular, 
\eqref{expde} with \eqref{boundary} is UES. Consequently, it is eOSS with 
linear gain.

\end{proof}

\begin{remark}\label{rem: different p}
 The coercive UES Lyapunov function in Proposition~\ref{prop:UGASLF} satisfies a dissipation estimate that is independent of the output. Therefore, under the same conditions as in Proposition~\ref{prop:UGASLF}, the system is UES and eOSS with linear gain for any nonlinear output of the form $ y_p(t) = \int_0^1 |x(z,t)|^p\, \mathrm dz$, where $0< p \le 2$.
\end{remark}

Next, we are going to show that the system \eqref{expde} with  \eqref{boundary} is eOSS for all parameter values using a decomposition.
\begin{proposition}\label{exproposition}
Let  $X=L^2(0,1)$ and $Y=\mathbb{R}$. 
System  \eqref{expde} with boundary conditions \eqref{boundary} is eOSS with linear gain for any $a>0$ and any $c,q\in\mathbb{R}$. 
\end{proposition}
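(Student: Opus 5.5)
The plan is to invoke Theorem~\ref{thm:assum-eoss}: verify Assumption~\ref{ass:decomp} for the operator $A$ with domain $D(A)$, and then check condition (vi), observability of the finite-dimensional pair $(A_u,C_u)$. Exponential detectability, hence eOSS with linear gain, then follows from Theorem~\ref{thm:single direction}.

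\emph{Step 1: spectral structure.} $A$ is a Sturm--Liouville operator with separated boundary conditions, and it is self-adjoint on the real space $L^2(0,1)$, since the Robin and Dirichlet conditions give symmetric boundary terms under integration by parts. It has compact resolvent, because $D(A)\subset H^2(0,1)$ embeds compactly. Hence there is an orthonormal basis of eigenfunctions $\varphi_n$ with simple real eigenvalues $\lambda_1>\lambda_2>\dots\to-\infty$. Simplicity holds because any two eigenfunctions for the same eigenvalue satisfy the same initial data at $z=1$ up to a scalar (value $0$, derivative free), so the eigenspace is one-dimensional.

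Let $X_u$ be the span of the finitely many $\varphi_n$ with $\lambda_n\ge0$, and let $X_s$ be its orthogonal complement. Both subspaces are closed and invariant, and $T(t)$ restricted to $X_s$ is bounded by $e^{\lambda_{N+1}t}$ with $\lambda_{N+1}<0$. This verifies Assumption~\ref{ass:decomp} with $M_s=1$.

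\emph{Step 2: observability of $(A_u,C_u)$.} The matrix $A_u$ is diagonal with distinct eigenvalues. For a diagonalizable system with simple eigenvalues, the Hautus test reduces observability to the condition $C\varphi_n=\int_0^1\varphi_n(z)\,\mathrm dz\ne0$ for every $n$ with $\lambda_n\ge0$. I expect this to be the main obstacle.

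To compute the integral, integrate the eigenvalue equation $a\varphi''+c\varphi=\lambda\varphi$ over $(0,1)$ and use $\varphi'(0)=-q\varphi(0)$:
\[
(\lambda-c)\int_0^1\varphi\,\mathrm dz=a\bigl(\varphi'(1)-\varphi'(0)\bigr)=a\bigl(\varphi'(1)+q\varphi(0)\bigr).
\]

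Suppose, to the contrary, that $\int_0^1\varphi_n\,\mathrm dz=0$. Then $\varphi'(1)=-q\varphi(0)$, and I would seek a contradiction by writing the eigenfunction explicitly. Set $\mu=(\lambda-c)/a$. Using $\varphi(1)=0$, take $\varphi(z)=\sinh(\sqrt{\mu}(1-z))$ if $\mu>0$, $\varphi(z)=\sin(\sqrt{-\mu}(1-z))$ if $\mu<0$, and $\varphi(z)=1-z$ if $\mu=0$. Substituting into $\varphi'(1)=-q\varphi(0)$ and comparing with the Robin condition at $z=0$ gives two equations.

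For the sinh case, the Robin condition gives $\sqrt\mu\cosh\sqrt\mu=q\sinh\sqrt\mu$, so $q>0$. The mean-zero condition gives $-\sqrt\mu=-q\sinh\sqrt\mu$. Combining them yields $\cosh=1$, which is impossible. The sine case is analogous: the two equations combine to $\cos\sqrt{-\mu}=1$. Then $\sqrt{-\mu}=2k\pi$, which forces $\sin\sqrt{-\mu}=0$, and hence $\varphi(0)=0$ and $\varphi'(0)=0$ by the Robin condition. With $\varphi(0)=\varphi'(0)=0$, the ODE gives $\varphi\equiv0$, a contradiction. The case $\mu=0$ gives $-1=-q$ and $-1=-q$; here I would check directly that $\int_0^1(1-z)\,\mathrm dz=1/2\neq0$, which is a contradiction.

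In fact, this argument should show $C\varphi_n\ne0$ for all $n$, not only the unstable ones. That is more than needed, but it confirms observability of $(A_u,C_u)$. Theorem~\ref{thm:assum-eoss} then completes the proof.
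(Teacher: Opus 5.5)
Your proposal is correct and follows the same route as the paper. Both verify Assumption~\ref{ass:decomp} through the self-adjoint Sturm--Liouville eigenbasis with simple eigenvalues, reduce observability of $(A_u,C_u)$ via the Hautus test to $C\varphi_k\neq 0$ on the unstable modes, and conclude with Theorem~\ref{thm:assum-eoss}. The only difference is cosmetic: you normalize eigenfunctions at $z=1$ and argue by contradiction from $(\lambda-c)\int_0^1\varphi\,\mathrm dz=a\bigl(\varphi'(1)+q\varphi(0)\bigr)$, whereas the paper normalizes at $z=0$ and computes $c_k$ in closed form.
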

\begin{proof}
We verify the assumptions of Theorem \ref{thm:assum-eoss}. 
Recall that the operator $A$ introduced above generates an analytic $C_0$-semigroup $T(\cdot)$ on $X=L^2(0,1)$. The operator $A$ is self-adjoint on $L^2(0,1)$.
Since $(0,1)$ is bounded, the embedding $H^2(0,1)\hookrightarrow L^2(0,1)$ is compact, and hence $A$ has compact resolvent (see \cite[Proposition~II.4.25]{engel2000one}). 

Furthermore, its spectrum consists  of eigenvalues $(\lambda_k)_{k\in\mathbb N}$ of $A$ with 
\begin{align*} -\infty<\cdots<\lambda_k<\cdots<\lambda_1\quad\text{and}\quad
\lambda_k \xrightarrow{k\to+\infty} -\infty, \end{align*} 
and the corresponding eigenfunctions $(\varphi_k)_{k\in\mathbb N}\subset D(A)$ 
form an orthonormal  basis of $L^2(0,1)$. 
In particular, every solution admits the expansion
\begin{align*}
& x_0=\sum_{k=1}^\infty \alpha_k\varphi_k,\quad \alpha_k = \langle x_0, \varphi_k \rangle_{L^2(0,1)}\\   
&x(\cdot,t)=T(t)x_0=\sum_{k=1}^\infty \alpha_k e^{\lambda_k t}\varphi_k(\cdot),
\end{align*}
where the series converges  in $L^2(0,1)$ for each $t \geq 0$.

For the output, we define
\begin{align*} 
y(t):=Cx(t)=\int_0^1 x(z,t)\, \mathrm dz
      =\sum_{k=1}^\infty \alpha_k e^{\lambda_k t} c_k, \end{align*}
where
\begin{align}
\label{eq:c_k_coefs_example}
 c_k := C\varphi_k = \int_0^1 \varphi_k(z)\, \mathrm dz.   
\end{align}

Since $\lambda_k\to -\infty$, the set
\begin{align*} 
I_u := \{k\in\mathbb N \mid  \lambda_k\ge 0\}
 \end{align*}
is finite. Note that if $I_u$ is empty, then the system is exponentially stable and the eOSS inequality holds trivially. Thus, we focus on the case when $I_u$ is nonempty.

We define the finite-dimensional subspace
\[
X_u := \operatorname{span}\{\varphi_k\mid  k\in I_u\},
\]
and its orthogonal complement $X_s := X_u^\perp$.
Then $X = X_u \oplus X_s$, where both $X_u$ and $X_s$ are closed and invariant under
the semigroup $T(\cdot)$. Accordingly, every solution admits the decomposition
\[
x(t) = x_u(t) + x_s(t), \quad x_u(t)\in X_u,\; x_s(t)\in X_s .
\]

Let $x_s(0)\in X_s$. Then
\[
x_s(t)=\sum_{k\notin I_u}\alpha_k e^{\lambda_k t}\varphi_k .
\]
By orthonormality,
\[
\|x_s(t)\|_{L^2(0,1)}^2
=\sum_{k\notin I_u}|\alpha_k|^2 e^{2\lambda_k t}.
\]
Since $I_u$ is finite and $\lambda_k\to-\infty$, there exists $\omega>0$ such that
$\lambda_k\le -\omega$ for all $k\notin I_u$. Hence
\begin{align*}
\|x_s(t)\|_{L^2(0,1)} \le e^{-\omega t}\|x_s(0)\|_{L^2(0,1)},\quad t\ge0,
\end{align*}
which implies \eqref{eq:Ts-stable} holds.

We now show that $c_k\neq 0$ for all $k\in I_u$. 
Consider the eigenvalue problem
\begin{align*}
a\varphi_k''(z)+(c-\lambda_k)\varphi_k(z)=0
\end{align*}
with boundary conditions
\begin{align*}
\varphi_k'(0)=-q\varphi_k(0),\quad\varphi_k(1)=0.
\end{align*}

By a slight modification of the standard computation in \cite[Chapter 4]{strauss2007partial},
we obtain the following eigenfunctions.

    \noindent\textbf{Case 1.} $c-\lambda_k>0$. Let $\mu_k := \sqrt{(c-\lambda_k)/a}>0$.
The eigenfunctions take the form
\begin{align*}
  \varphi_k(z) = \varphi_k(0)\bigl(\cos(\mu_k z) - \frac{q}{\mu_k}\sin(\mu_k z)\bigr),
\end{align*}
where  $\mu_k$  satisfies the  equation
 \begin{align}\label{eq:case1}
 \mu_k\cos\mu_k - q\sin\mu_k = 0,
 \end{align}
which is derived from the boundary condition $\varphi_k(1)=0$.
    
    \noindent\textbf{Case 2.} $c-\lambda_k=0$. A nontrivial eigenfunction exists only if $q=1$,
    in which case $\lambda_k=c$ and
    \begin{align*}
      \varphi_k(z)=\varphi_k(0)(1-z).
    \end{align*}

    \noindent\textbf{Case 3.} $c-\lambda_k<0$. Let $\kappa_k := \sqrt{(\lambda_k-c)/a}>0$.
    The eigenfunctions can be written as
    \begin{align*}
      \varphi_k(z) = \varphi_k(0)\bigl( \cosh(\kappa_k z) - \frac{q}{\kappa_k}\sinh(\kappa_k z)\bigr),
    \end{align*}
    where $\kappa_k$ is determined by  the  equation
     \begin{align*}
      \kappa_k\cosh\kappa_k - q\sinh\kappa_k = 0.
   \end{align*}

Plugging each of the expressions from cases~1–3 into \eqref{eq:c_k_coefs_example}, we obtain 
\begin{align*}
  c_k =
  \begin{cases}
    \displaystyle \frac{1-\cos\mu_k}{\mu_k\sin\mu_k}\varphi_k(0), & \text{if } c-\lambda_k>0,\\[1.2ex]
    \displaystyle \frac{1}{2}\varphi_k(0), & \text{if } c-\lambda_k=0 \text{ and } q=1,\\[1.2ex]
    \displaystyle \frac{\cosh\kappa_k-1}{\kappa_k\sinh\kappa_k}\varphi_k(0),
      & \text{if } c-\lambda_k<0.
  \end{cases}
\end{align*}

Moreover, $\varphi_k(0)\neq 0$. Indeed, if $\varphi_k(0)=0$, then the
boundary condition $\varphi_k'(0)=-q\varphi_k(0)$ gives
$\varphi_k'(0)=0$. By uniqueness for the corresponding second-order ODE,
this would imply $\varphi_k\equiv 0$, a contradiction.
 In case~1, the  equation \eqref{eq:case1}
implies $\sin\mu_k\neq 0$ and $\cos\mu_k\neq 1$.
In case~3, since $\kappa_k>0$, we know $\sinh\kappa_k>0$ and $\cosh\kappa_k>1$.
Therefore $c_k\neq 0$ for all $k\in I_u$.

The spectrum of $A_u$ is $\{\lambda_k\mid k\in I_u\}$. Since this
one-dimensional Sturm--Liouville problem has simple eigenvalues, each
eigenspace corresponding to $\lambda_k$ is spanned by $\varphi_k$. Moreover,
we have shown that
\[
C\varphi_k=c_k\neq0,\quad k\in I_u.
\]
Hence, no eigenvector of $A_u$ corresponding to an eigenvalue with a nonnegative real part belongs to $\ker C_u$. By the finite-dimensional
Hautus criterion, the pair $(A_u,C_u)$ is detectable. The proof is complete
via Theorem~\ref{thm:assum-eoss}.
\end{proof}

\begin{remark}
In particular, \eqref{expde} with \eqref{boundary}   admits a coercive eOSS Lyapunov function,  and is both exponentially detectable and exponentially zero-detectable.
\end{remark}

\section{Conclusion}\label{concl}

In this paper, we have developed a systematic theory for (exponential)  output-to-state stability of linear infinite-dimensional systems with bounded output operators.

We have shown that exponential OSS is natural in the sense that it can be characterized in terms of (suitably defined) OSS Lyapunov functions in implication form.
Also, eOSS of a system constitutes a necessary condition for the existence of a robust observer for the system. 

We proved that exponential detectability implies eOSS, which in turn implies exponential zero-detectability. 
The converse implications were shown to fail in general. Counterexamples
demonstrate that exponential zero-detectability does not imply eOSS, that
eOSS does not imply the existence of a coercive eOSS Lyapunov function in  dissipation form, and that such a Lyapunov function does not imply exponential detectability.  

The fact that the existence of an OSS Lyapunov function in a dissipation form and the existence of an OSS Lyapunov function in an implication form are not equivalent (already for linear systems with bounded output operators!) is rather surprising, and indicates that implication-form Lyapunov functions are, after all, more suitable for the analysis of this property. 

Note that OSS can be formulated and studied for nonlinear systems, see \cite{sontag1997output} for the ODE case and our preliminary results \cite{chen2025lyapunov} in the infinite-dimensional setting. As exponential detectability cannot be extended in a straightforward way to nonlinear systems, and exponential zero-detectability is a far too weak property, we again feel that OSS is a natural concept to describe nonlinear detectability.

On the other hand, when the unstable subspace is finite-dimensional, exponential zero-detectability
implies exponential detectability, and the corresponding eOSS characterizations become equivalent.

\bibliographystyle{siamplain}
\bibliography{AM-Sandbox/References,AM-Sandbox/Mir_LitList_NoMir,AM-Sandbox/MyPublications,fabian}
\end{document}